\def\Tor{{\mathbb T}}
\def\dist{\mathrm{dist\,}}
\def\diam{\mathrm{diam\,}}
\def\H{{\mathbb H}}
\begin{document}

\setcounter{secnumdepth}{3}
\setcounter{tocdepth}{2}
\newtheorem{theorem}{Theorem}[section]
\newtheorem{definition}[theorem]{Definition}
\newtheorem{lemma}[theorem]{Lemma}
\newtheorem{proposition}[theorem]{Proposition}
\newtheorem{corollary}[theorem]{Corollary}
\newtheorem{gstatement}{Problem}
\newcommand{\mf}{\mathfrak}
\newcommand{\mb}{\mathbb}
\newcommand{\ol}{\overline}
\newcommand{\la}{\langle}
\newcommand{\ra}{\rangle}

\newtheorem{Alphatheorem}{Theorem}
\renewcommand{\theAlphatheorem}{\Alph{Alphatheorem}} 

\newtheorem{Alphatheoremprime}{Theorem}
\renewcommand{\theAlphatheoremprime}{\theAlphatheorem\textquotesingle}

\newcommand{\EM}{\ensuremath}
\newcommand{\norm}[1]{\EM{\left\| #1 \right\|}}

\newcommand{\modul}[1]{\left| #1\right|}

\def\N{{\mathbb N}}
\def\R{{\mathbb R}}
\def\Z{{\mathbb Z}}
\def\Sph{{\mathbb S}}
\def\H{{\mathbb H}}
\def\phi{\varphi}
\def\epsilon{\varepsilon}
\def\V{{\mathcal V}}
\def\E{{\mathcal E}}
\def\F{{\mathcal F}}
\def\C{{\mathcal C}}
\def\inn{\mathrm{int}}
\def\grad{\mathrm{grad\,}}
\def\ext{\mathrm{ext}}
\def\Vol{\mathrm{Vol}}

\def\M{{\mathcal M}}
\def\T{{\mathcal T}}

\title{Hyperbolic cusps with convex polyhedral boundary}
\author{Fran\c{c}ois Fillastre}
\address{Department of Mathematics \\
 University of Fribourg, P\'erolles \\
 Chemin du Mus\'ee 23 \\
 CH-1700 Fribourg  \\
 SWITZERLAND}
\email{francois.fillastre@unifr.ch}
\thanks{The first author was partially supported by Schweizerischer Nationalfonds 200020-113199/1} 

 \author{Ivan Izmestiev}
\thanks{The second author was supported by the DFG Research Unit 565 ``Polyhedral Surfaces''}
\address{Institut f\"ur Mathematik, MA 8-3 \\
Technische Universit\"at Berlin \\
Str. des 17. Juni 136 \\
D-10623 Berlin \\
 GERMANY}
\email{izmestiev@math.tu-Berlin.de}

\date{\today}
\maketitle

\begin{abstract}
We prove that a 3-dimensional hyperbolic cusp with convex polyhedral boundary is uniquely determined by the metric induced on its boundary. Furthemore, any hyperbolic metric on the torus with cone singularities of positive curvature can be realized as the induced metric on the boundary of a convex polyhedral cusp.

The proof uses the total scalar curvature functional on the space of ``cusps with particles'', which are hyperbolic cone-manifolds with the singular locus a union of half-lines. We prove, in addition, that convex polyhedral cusps with particles are rigid with respect to the induced metric on the boundary and the curvatures of the singular locus.

Our main theorem is equivalent to a part of a general statement about isometric immersions of compact surfaces.
\end{abstract}

\keywords{\textbf{Keywords.} Hyperbolic cusp; convex polyhedral boundary; discrete total scalar
curvature; infinitesimal rigidity.}

\section{Introduction}

In Subsection \ref{subsec:Statements} we state the results, Subsection \ref{sub:related works} puts them in a more general context, and Subsection \ref{sketch} gives a sketch of the proof and a plan of the paper. Precise definitions will be given in Section \ref{Definitions}.

\subsection{Statements}\label{subsec:Statements}

Let $M \approx \Tor \times [0,+\infty)$ be a convex hyperbolic 3-manifold
with a cusp and with piecewise geodesic boundary. We often call it a \emph{convex polyhedral cusp}. The induced metric on~$\partial M$ is a hyperbolic metric on the torus~$\Tor$ with conical singularities of positive singular curvature.  The main result of this paper is that the metric on $M$ is uniquely determined by the metric on $\partial M$:

\begin{Alphatheorem}\label{Base theorem}
Let $g$ be a hyperbolic metric with conical singularities of positive singular curvature on the $2$-torus $\Tor$. Then there exists a convex polyhedral cusp $M$ such that $\partial M$ with the induced metric is isometric to $(\Tor, g)$. Furthermore, $M$ is unique up to isometry.
\end{Alphatheorem}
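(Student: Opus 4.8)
The plan is to adapt the variational method of Bobenko--Izmestiev for Alexandrov's theorem to the cusped setting, built around the discrete total scalar curvature functional announced in the abstract. Fix a geodesic triangulation $T$ of $(\Tor,g)$ whose vertices are exactly the cone points $v_1,\dots,v_n$ (a Delaunay triangulation, say). To a tuple of ``heights'' $\mathbf h=(h_1,\dots,h_n)\in\R^n$ one associates a hyperbolic cone-manifold $C_T(\mathbf h)\approx\Tor\times[0,\infty)$ assembled from ``half-ideal'' hyperbolic simplices, one over each triangle $v_iv_jv_k$ of $T$: in the upper half-space model such a simplex has the point at infinity as one vertex and three finite vertices at heights governed by $h_i,h_j,h_k$, positioned so that the geodesic triangle they span is isometric to $v_iv_jv_k$; two adjacent simplices are glued along the vertical face through the cusp point, which depends only on the two heights and the common edge length, so the gluing is consistent. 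By construction the induced metric on $\partial C_T(\mathbf h)$ is $g$ with $T$ as a geodesic triangulation; what varies with $\mathbf h$ is the convexity of the boundary, the shape of the flat cross-section near the cusp, and hence the cone angles $\theta_i(\mathbf h)$ along the singular half-lines (the ``particles''). Letting $\mathbf h$ range over the open set $\mathcal A_T$ of admissible tuples (nondegenerate simplices, convex boundary) and varying $T$, these charts glue into a manifold $\mathcal C(g)$ of convex polyhedral cusps-with-particles with boundary $(\Tor,g)$; one checks it is nonempty — here the positivity of the cone curvatures is essential — and connected, and a genuine cusp is a point of $\mathcal C(g)$ with all $\theta_i=2\pi$.

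Next, on each chart I would introduce the discrete total scalar curvature $\mathcal S(\mathbf h)$, a suitably renormalized combination of $\Vol\,C_T(\mathbf h)$, the particle curvatures $\psi_i:=2\pi-\theta_i$ weighted by their (regularized) lengths, and boundary terms — the particles are non-compact, so one truncates at a reference horosphere and discards the divergent part, the $h_i$ playing the role of the regularized lengths. Using the Schläfli differential formula for hyperbolic cone-manifolds, applied with the boundary edge lengths held fixed since the boundary metric is pinned to $g$, one shows that $\mathcal S$ is smooth, is $C^1$ across the walls where $T$ changes by a flip, and satisfies $\partial\mathcal S/\partial h_i=\psi_i(\mathbf h)$ for an appropriate normalization. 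Hence the convex polyhedral cusps with boundary metric $g$ are precisely the critical points of $\mathcal S$ on $\mathcal C(g)$, and to prove Theorem \ref{Base theorem} it suffices to show $\mathcal S$ has a unique critical point.

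For uniqueness the decisive point is that the Hessian $\bigl(\partial\psi_i/\partial h_j\bigr)$ is definite (of the appropriate sign), so that $\mathcal S$ is strictly concave: this is the infinitesimal rigidity of convex polyhedral cusps-with-particles, and, after a computation of the second-order behaviour of the half-ideal simplices, it reduces to the non-degeneracy — with the correct sign — of a discrete Laplace-type operator attached to $T$ and the geometry of the faces. For existence one then shows that $\mathcal S\to-\infty$ as $\mathbf h$ approaches the boundary of $\mathcal C(g)$ (two vertices colliding, a vertex escaping so that a face degenerates or convexity fails, a particle angle tending to $0$ or $2\pi$); a function that is strictly concave and tends to $-\infty$ at the boundary of the connected manifold $\mathcal C(g)$ attains its maximum at a unique interior critical point $M$, which is the required convex polyhedral cusp, unique up to isometry. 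The converse implication is elementary: convexity of $\partial M$ forces each cone angle to be $<2\pi$, and Gauss--Bonnet on the boundary torus gives $\sum_i\kappa_i=\operatorname{Area}(\partial M)>0$.

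The two points I expect to be genuinely hard are (i) the sign and non-degeneracy of the Hessian of $\mathcal S$ — the technical core — which demands a delicate analysis of the second-order geometry of the half-ideal simplices, the cusped analogue of the closed computations but complicated by the non-compactness of the singular locus and the ensuing renormalizations; and (ii) the properness of $\mathcal S$, which needs a complete classification of the ways a generalized cusp can degenerate together with a blow-up estimate in each case, in particular near the cusp itself, a phenomenon with no counterpart in the spherical or closed hyperbolic versions of the theorem.
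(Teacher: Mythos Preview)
Your overall framework matches the paper's: the space of cusps with particles parametrized by truncated particle lengths, the functional $S$, the Schl\"afli computation $\partial S/\partial h_i=\kappa_i$, and the identification of genuine cusps with critical points are all correct. But two of your steps would not go through as stated.

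Your existence argument via properness fails: the space $\M(\Tor,g)$ of convex polyhedral cusps with particles is a \emph{compact} convex subset of $\R^\Sigma/\langle\mathbb{1}\rangle$, and $S$ is continuous on it, so $S$ does not tend to $-\infty$ anywhere. The boundary of $\M(\Tor,g)$ consists of cusps where some boundary dihedral angle equals $\pi$ (an edge becomes flat, a face non-triangular), and $S$ is perfectly finite there; your list of degenerations (``vertices colliding, a vertex escaping'') does not occur, since the vertices are the fixed cone points of $g$ and the heights are uniformly bounded by a diameter estimate. The paper's existence argument is accordingly different. Compactness gives a maximum point $M$ of $S$; one then shows that if $\kappa_i(M)<0$ for some $i$ (which happens whenever $\kappa\not\equiv 0$, since $\sum_j\kappa_j=0$), then $h_i$ can be decreased while remaining in $\M(\Tor,g)$, strictly increasing $S$. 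The subtlety is that $M$ may sit on the boundary, with $i$ lying in a large face $F$, and decreasing $h_i$ then requires re-triangulating $F$ from $i$. That this can always be done is the content of Volkov's lemma: if $\kappa_i<0$ then every face containing $i$ has angle $<\pi$ at $i$. This lemma, not a blow-up estimate for $S$, is the engine of the existence proof.

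Your uniqueness argument also needs adjustment: the Hessian $(\partial\kappa_i/\partial h_j)$ is \emph{not} definite on all of $\M(\Tor,g)$. It is only negatively semidefinite, with null-space spanned by the indicator vectors of the connected components of the edge graph $\Gamma(M)$. At a genuine cusp ($\kappa\equiv 0$) the $1$-skeleton is connected (it is the quotient of the $1$-skeleton of a convex polyhedron in $\H^3$), so the Hessian is non-degenerate there --- this yields infinitesimal rigidity --- but at a general cusp with particles $\Gamma(M)$ can be disconnected; the paper exhibits an explicit example with an isolated vertex. Strict concavity of $S$ on all of $\M(\Tor,g)$ does hold, but requires a separate argument: if $S$ were affine on a segment, the coordinate difference would be constant on each component of the union of the two edge graphs, and any non-simply-connected complementary region carries a distance-like function unique up to an additive constant, forcing the difference to be globally constant.
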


This theorem can be viewed as a statement about isometric immersions. A \emph{convex parabolic polyhedron} is a pair $(P,G)$, where $P$ is a convex polyhedron in $\mathbb{H}^3$, and $G$ is a discrete subgroup of $\mathrm{Iso}^+(\mathbb{H}^3)$ that acts freely cocompactly on a horosphere and leaves $P$ invariant. Figure \ref{fig:ellipsoid} shows an example of a convex parabolic polyhedron, whose vertices form an orbit of the group $G$. For any convex parabolic polyhedron $(P,G)$, the quotient $P/G$ is a convex polyhedral cusp. Conversely, the universal cover of a convex polyhedral cusp is isometric to a convex parabolic polyhedron. Thus Theorem \ref{Base theorem} says that each hyperbolic metric on $\Tor$ with conical singularities of positive singular curvature can be uniquely realized as the boundary of a convex parabolic polyhedron:

\begin{Alphatheoremprime}\label{realisation theorem}
Let $g$ be a hyperbolic metric with conical singularities of positive singular curvature on the torus $\Tor$. Then there exists a unique up to equivariant isometry convex parabolic polyhedron $(P,G)$ such that $\partial P/G$ is isometric to $(\Tor,g)$.
\end{Alphatheoremprime}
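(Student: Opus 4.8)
The plan is to prove Theorem~\textquotesingle{} (the equivariant realization statement) as a consequence of Theorem~\ref{Base theorem}, since the two are equivalent via the correspondence between convex polyhedral cusps and convex parabolic polyhedra sketched in the excerpt. So the real work is to establish Theorem~\ref{Base theorem}, and the equivariant formulation then follows formally by passing to the universal cover. Let me outline how I would attack Theorem~\ref{Base theorem} itself, since that is where all the content lies.

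\textbf{Strategy: a variational/deformation approach via the discrete total scalar curvature.} Following the philosophy indicated in the abstract, I would embed the problem into the larger category of \emph{cusps with particles}: hyperbolic cone-manifolds homeomorphic to $\Tor\times[0,\infty)$ with a cusp, whose singular locus is a disjoint union of vertical half-lines (the ``particles'') meeting the boundary at its cone points, each carrying a cone angle (equivalently a curvature). The existence half would proceed by a continuity/degree argument: fix the combinatorics of a geodesic triangulation of $(\Tor,g)$ with the cone points as vertices, and consider the map that sends a cusp-with-particles structure (encoded by the edge lengths, or by the positions of the vertices in the parabolic polyhedron picture) to the induced boundary metric together with the curvatures along the particles. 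One shows this map is a local diffeomorphism onto an open set by proving \emph{infinitesimal rigidity}: a convex polyhedral cusp with particles admits no nontrivial first-order deformation fixing the induced metric and the singular curvatures. The infinitesimal rigidity is exactly what one gets from the second variation of the discrete total scalar curvature functional $S = \sum_e \ell_e\,\theta_e + (\text{boundary terms})$, whose Hessian one shows is nondegenerate (indeed, sign-definite) on the relevant space of deformations — this is the analogue of the classical Alexandrov/Pogorelov rigidity and of Izmestiev's earlier work on the Minkowski-type variational proof. Then a properness argument (controlling degenerations: edges collapsing, vertices escaping into the cusp, cone angles going to $0$ or $2\pi$) upgrades the local statement to a global one, yielding existence for all target metrics with the right curvature data; specializing the particle curvatures to zero gives the genuine smooth cusp of Theorem~\ref{Base theorem}.

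\textbf{Uniqueness.} For uniqueness I would argue directly with a global rigidity (Alexandrov-type) statement rather than just the infinitesimal one: given two convex polyhedral cusps $M_1, M_2$ with isometric boundary, put them in the universal cover as convex parabolic polyhedra $(P_1,G)$ and $(P_2,G)$ with the \emph{same} group $G$ and a boundary isometry, and run a Cauchy-type argument. Concretely, one compares the two polyhedra along a common combinatorial structure and uses a maximum principle / sign-counting argument (Cauchy's lemma on the sphere of directions at each vertex, or its equivariant hyperbolic analogue) to conclude the two develop the same way, hence are equivariantly congruent. Alternatively, connectedness of the ``deformation through convex cusps with fixed boundary metric'' together with infinitesimal rigidity gives uniqueness as well, so the same Hessian computation does double duty.

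\textbf{Main obstacle.} The hard part will be the analytic control of the variational functional: proving that the Hessian of the discrete total scalar curvature is nondegenerate on the space of admissible deformations (those preserving the cusp structure and the convexity), and separately, establishing properness of the period/holonomy map so that the continuity method actually closes up. The Hessian computation is delicate because one must identify the correct space of variations modulo trivial ones, handle the parabolic (cusp) boundary condition at infinity — which is not a standard compact boundary — and show the quadratic form has a definite sign there; the degeneration analysis is delicate because in the parabolic setting vertices can run off ``towards the cusp'' and one needs a geometric compactness argument (bounding the combinatorial type and the geometry uniformly) to rule this out. These two points — the signature of the Hessian and the properness — are exactly the technical heart of the paper, and everything else (the reduction to isometric immersions, the equivalence of Theorems~\ref{Base theorem} and~\textquotesingle{}, passing between cusps with particles and honest cusps) is comparatively formal.
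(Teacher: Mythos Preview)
Your high-level plan---reduce Theorem~\ref{realisation theorem} to Theorem~\ref{Base theorem} and attack the latter by enlarging to cusps with particles and using the discrete total scalar curvature $S$---matches the paper. But the execution diverges at a crucial point, and your version contains a genuine gap.

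You propose a continuity/degree argument: prove infinitesimal rigidity of cusps with particles (Hessian of $S$ nondegenerate), deduce that the relevant map is a local diffeomorphism, then close up with properness. The problem is that infinitesimal rigidity of cusps with particles is \emph{false} in general: the Hessian $\left(\frac{\partial \kappa_i}{\partial h_j}\right)$ can degenerate. The paper computes its nullspace explicitly (Lemma~\ref{lem:HessKernel}): it is nontrivial precisely when the edge graph $\Gamma(M)$ of the cusp is disconnected, and this does occur---the example in Subsection~\ref{subsec:CuspsParticles} has an isolated vertex. So your local-diffeomorphism step fails as stated, and the paper remarks explicitly (end of Subsection~5.2) that cusps with particles are in general not infinitesimally rigid.

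The paper sidesteps this by a direct variational argument rather than a deformation/continuity one. The key points you are missing are: (i) the right coordinates on the space of cusps with particles are the \emph{particle lengths} $[h]\in\R^\Sigma/\langle\mathbb{1}\rangle$ (not edge lengths or vertex positions), and in these coordinates $\M(\Tor,g)$ is a \emph{compact convex} set (Proposition~\ref{prp:SpaceOfCusps}); (ii) although the Hessian of $S$ may degenerate, $S$ is nonetheless \emph{strictly concave} globally (Lemma~\ref{lem:SConc})---the degenerate directions are ruled out by a separate geometric argument exploiting that a non-simply-connected face supports an essentially unique distance-like function. Existence then comes from maximizing $S$ on the compact set and showing, via Volkov's lemma (Lemma~\ref{lem:Volkov}), that at a point with some $\kappa_i\ne 0$ one can still move to increase $S$; uniqueness is immediate from strict concavity together with $\nabla S=\kappa$. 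No Cauchy-type sign-counting is needed, and no separate properness/degeneration analysis either---compactness of $\M(\Tor,g)$ is established directly from the PD-function description. The Hessian is shown nondegenerate only \emph{at the critical point itself} (Lemma~\ref{lem:HessCusp}, where $\Gamma(M)$ is connected because it is the $1$-skeleton of a genuine convex polyhedron), and this yields the infinitesimal rigidity of honest cusps (Theorem~\ref{InfRigCusp}) as a byproduct rather than as the engine of the existence proof.
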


This is a part of a general statement about polyhedral realization of metrics on compact surfaces, see Subsection \ref{sub:related works}.

\begin{figure}[ht]
\includegraphics{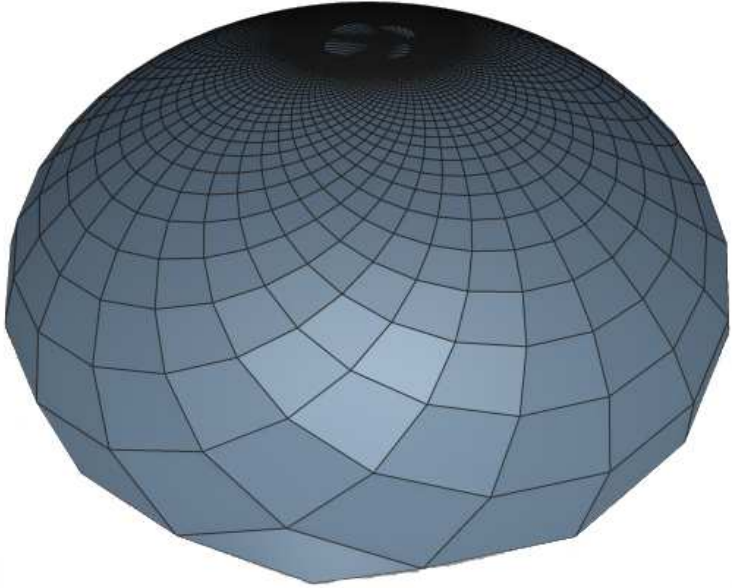}
\caption{A simplest convex parabolic polyhedron in the Klein projective model.}
\label{fig:ellipsoid}
\end{figure}
The uniqueness part of Theorem \ref{Base theorem} is a rigidity statement: two convex polyhedral cusps with isometric boundaries are isometric. Compare this with the Cauchy-Alexandrov theorem on rigidity of convex polytopes, \cite{Cauchy13}, \cite{AlexandrovBook}. We prove also the corresponding infinitesimal rigidity result: any non-trivial first-order deformation of the metric on $M$ in the class of complete hyperbolic metrics induces a non-trivial first-order deformation of the metric on $\partial M$.

\begin{Alphatheorem}\label{InfRigCusp}
Convex polyhedral cusps are infinitesimally rigid.
\end{Alphatheorem}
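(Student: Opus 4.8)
The plan is to deduce Theorem \ref{InfRigCusp} from a definiteness property of the discrete total scalar curvature functional on the space of cusps with particles, following the variational scheme used for Alexandrov's theorem. Fix the cusp $M$ and a geodesic triangulation $T$ of $\partial M$ whose vertices are the cone points (subdividing the natural decomposition of $\partial M$ if necessary). Since $\partial M$ is \emph{strictly} convex, the polyhedral structure, hence the combinatorics $T$, persists under small deformations of the complete hyperbolic metric on $M$, with all combinatorial and metric data depending smoothly on the deformation; so it suffices to work in the finite-dimensional space $\mathcal{D}(T)$ of convex polyhedral cusps with particles whose boundary combinatorics is $T$, the particles running along the geodesics from the vertices of $\partial M$ into the cusp. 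A point of $\mathcal{D}(T)$ is recorded equivalently by the edge lengths $(\ell_e)$ of $T$ together with the particle cone angles $(\phi_i)$, or by the dihedral angles $(\theta_e)$ along the edges together with the $(\phi_i)$. On $\mathcal{D}(T)$ one introduces the total scalar curvature (discrete Hilbert--Einstein) functional, of the schematic form
\[
  \mathcal{S}\;=\;-c\,\Vol(M)\;+\;\sum_e \ell_e(\pi-\theta_e)\;+\;\sum_i \lambda_i(2\pi-\phi_i)
\]
with $\lambda_i$ the length of the $i$-th particle and $c$ a normalizing constant; since $\Vol(M)$ and the $\lambda_i$ are infinite, one first truncates along a fixed horospherical cross-section and subtracts the common divergent part, so that $\mathcal{S}$ becomes a well-defined smooth function on $\mathcal{D}(T)$. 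The structural input is the Schl\"afli formula, which up to this regularization reads $d\mathcal{S}=-\sum_e \ell_e\,d\theta_e-\sum_i \lambda_i\,d\phi_i$; in particular the quadratic form $q(\dot x)=-\sum_e \dot\ell_e\,\dot\theta_e-\sum_i \dot\lambda_i\,\dot\phi_i$ on $T\mathcal{D}(T)$ is symmetric, being the Hessian of $\mathcal{S}$ in the dihedral-angle coordinates.

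Granting this, the theorem follows once one shows that $q$ is semidefinite and that its radical consists only of trivial deformations, namely those arising from isotopies of $M$ together with the one-parameter freedom in the choice of truncation horosphere. Indeed, let $\dot x\neq 0$ be a first-order deformation of the metric on $M$ inside the class of complete hyperbolic metrics, and suppose it fixes the induced metric on $\partial M$ to first order. By the stability of the strictly convex polyhedral structure, $\dot x$ is tangent to $\mathcal{D}(T)$; subtracting a trivial deformation (extend the boundary isotopy to $M$) we may assume $\dot\ell_e=0$ for every edge $e$, and for an honest cusp $\dot\phi_i=0$ as well. Then $q(\dot x)=0$, so by semidefiniteness $\dot x$ lies in the radical of $q$ and is therefore trivial, contradicting $\dot x\neq 0$. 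Hence $\dot g_{\partial M}\neq 0$ whenever $\dot g_M\neq 0$, which is Theorem \ref{InfRigCusp}; for a cusp with particles, fixing the curvatures of the singular locus amounts to imposing $\dot\phi_i=0$ directly, and the identical computation yields the companion rigidity statement announced in the abstract.

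The crux is therefore the semidefiniteness of $q$ and the identification of its radical, and this is where the real work lies. I would argue as in the variational proof of Alexandrov's theorem: differentiating the Schl\"afli relation and using that the faces of $M$ are totally geodesic, one writes $q$ as a sum of local terms indexed by the vertices of $T$ (plus a term coming from the cusp end), each a quadratic form in the infinitesimal dihedral-angle changes around one vertex which is semidefinite with an explicitly computable radical; this local assertion amounts to the infinitesimal rigidity of a hyperbolic vertex figure and reduces to the rigidity of a single hyperbolic triangle together with a Cauchy-type count of sign changes around the link of the vertex. One then checks that an infinitesimal deformation lying in every local radical is globally trivial, propagating the information along the connected $1$-skeleton of $\partial M$, and that the particles contribute no further radical directions. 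Alternatively, the infinitesimal Pogorelov map can be invoked to transfer the non-degeneracy to the classical infinitesimal rigidity of convex Euclidean polyhedra. The genuinely delicate points are the correct regularization of $\mathcal{S}$ near the cusp, so that the Schl\"afli formula and its differential survive truncation; the behaviour of the parabolic holonomy along the deformation; and the analysis of the local radicals when particles of cone angle as large as $2\pi$ are present.
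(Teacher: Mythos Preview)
Your variational framework is the right one, but you are working in a strictly larger deformation space than the paper does, and this makes the crucial step---the signature of the Hessian---much harder than it needs to be, to the point that you do not actually prove it.

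The paper's route is the following. First, Theorem~\ref{InfRigCusp} is reduced in one line to Theorem~\ref{infinitesimal rigidity} (parabolic infinitesimal rigidity of the developing image): a first-order deformation of the hyperbolic metric on $M$ that is trivial on $\partial M$ corresponds to an equivariant infinitesimal isometric deformation of the convex parabolic polyhedron $(P,G)$, and triviality is preserved under this correspondence. Second, and this is the key simplification you miss, the functional $S$ is studied on the space $\M(\Tor,g)$ of cusps with particles \emph{with the boundary metric $g$ fixed}. The only coordinates are then the particle lengths $[h]\in\R^\Sigma/\langle\mathbb{1}\rangle$, and Schl\"afli gives $\partial S/\partial h_i=\kappa_i$, so the Hessian is precisely the Jacobian $(\partial\kappa_i/\partial h_j)$. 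This matrix is computed explicitly from the cosine law in semi-ideal triangles (equation~(\ref{eqn:dkdh})): the off-diagonal entries are non-negative, vanish exactly on flat edges, and each row sums to zero because $\sum_j\kappa_j=0$. Hence the Hessian is negative semidefinite with radical spanned by the indicator vectors of connected components of the edge graph $\Gamma(M)$. For an honest cusp, $\Gamma(M)$ is the $1$-skeleton of a convex polyhedron and therefore connected, so the Hessian is negative definite (Lemma~\ref{lem:HessCusp}). Any parabolic deformation of $(P,G)$ yields a variation $\dot r$ of the particle lengths with $\dot\kappa=0$, hence $\dot r$ lies in this trivial radical.

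By contrast, you let the edge lengths $\ell_e$ vary and work with the form $q(\dot x)=-\sum_e\dot\ell_e\dot\theta_e-\sum_i\dot\lambda_i\dot\phi_i$ on all of $\mathcal{D}(T)$. Your argument then needs $q$ to be semidefinite on this full space in order to conclude that the isotropic vector with $\dot\ell_e=\dot\phi_i=0$ lies in the radical. You acknowledge this is ``where the real work lies'' and sketch a vertex-link/Cauchy approach or a Pogorelov transfer, but neither is carried out; and in fact the Hilbert--Einstein functional is \emph{not} in general concave on the full space of polyhedral metrics (cf.\ the remark in the paper that in \cite{BobenkoIzmestiev} the functional is neither concave nor convex). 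So the semidefiniteness you need on $\mathcal{D}(T)$ is genuinely in doubt, whereas on the slice $\{\ell_e=\mathrm{const}\}$ it follows from a two-line computation. The moral: fix the boundary metric from the outset, take particle lengths as coordinates, and compute $\partial\kappa_i/\partial h_j$ directly; the connectedness of the $1$-skeleton of a convex polyhedron then finishes the proof.
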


\begin{Alphatheoremprime}\label{infinitesimal rigidity}
Convex parabolic polyhedra are parabolically infinitesimally rigid.
\end{Alphatheoremprime}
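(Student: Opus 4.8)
The plan is to prove Theorem B$'$ (parabolic infinitesimal rigidity of convex parabolic polyhedra), which is equivalent to Theorem B (infinitesimal rigidity of convex polyhedral cusps) via the universal cover correspondence described above. Both will ultimately be consequences of a stronger rigidity statement for ``cusps with particles'' that the introduction announces the paper will establish: convex polyhedral cusps with particles are infinitesimally rigid with respect to the pair (induced boundary metric, curvatures along the singular locus). So the real structure of the argument is: (i) set up the deformation theory of cusps with particles, (ii) prove infinitesimal rigidity there, and (iii) deduce Theorem B by specializing to zero particle curvatures.

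First I would fix a combinatorial triangulation of $\partial M$ (refining the polyhedral structure if necessary so that all faces are triangles) and introduce the natural coordinates on the space of such cusps with particles: the edge lengths of the triangulation together with the singular curvatures of the particles, or dually the dihedral angles along the edges together with the cone angles at vertices. A first-order deformation preserving the boundary metric and the particle curvatures is then a first-order variation of the dihedral angles that keeps all edge lengths and all cone-angle/curvature data fixed; one must show every such variation is trivial, i.e. induced by an isometric deformation of the ambient cusp (in the parabolic picture, by a deformation through the parabolic group $G$, which forces it to be zero on the quotient). The standard mechanism here, going back to the variational proofs of Alexandrov-type theorems, is to use the total scalar curvature (Hilbert--Einstein) functional on the space of cone-manifolds: by the Schläfli formula its first variation is a linear combination of edge lengths times variations of dihedral angles plus singular-locus length terms, so its Hessian is a quadratic form on the space of angle deformations whose kernel/degeneracy controls infinitesimal rigidity. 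Concretely, I would compute this Hessian and show it is non-degenerate (definite) on the subspace of admissible deformations, using convexity of the boundary — positivity of singular curvature of the cone points is exactly what makes the relevant quadratic form definite, just as in Izmestiev's proof of Alexandrov's theorem.

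The key intermediate steps, in order: (1) identify the tangent space to the space of convex polyhedral cusps with particles at $M$ and the linear map from it to (variations of boundary metric $\oplus$ variations of particle curvatures); (2) write down the Schläfli-type formula for a cusp with particles — care is needed at the cusp, where one works with a horospherical cross-section and checks that the contribution from infinity vanishes or is controlled, and along the particles, where a half-line singular locus contributes a length$\times$curvature-variation term; (3) differentiate once more to get the Hessian of the total scalar curvature as a function of the dihedral angles with boundary metric fixed, and express it via the second-order Schläfli identity; (4) prove this Hessian is negative (or positive) definite on the space of non-trivial admissible deformations — this is where one decomposes the form into a sum of contributions of the link of each singular vertex and invokes the positive-curvature hypothesis, plus a separate argument that the parabolic/cusp directions contribute nothing bad; (5) conclude that the only admissible deformation is the trivial one, hence infinitesimal rigidity, and then set particle curvatures to zero to recover Theorems B and B$'$.

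The main obstacle I anticipate is step (2)–(4) done honestly in the non-compact setting: the Schläfli formula and its second variation are classically stated for compact cone-manifolds, and here the manifold has a cusp, so one must truncate by a horospherical torus, carry the boundary terms, and verify that the truncation contributions converge and do not spoil the definiteness — equivalently, show that deformations are automatically asymptotically parabolic and that the ``volume at infinity'' term behaves. A secondary but real difficulty is bookkeeping the particles: the singular locus being half-lines rather than closed geodesics changes which terms appear in Schläfli, and one must make sure the quadratic form is taken on the correct finite-dimensional space (edge-angle variations modulo trivial deformations) so that definiteness is even a meaningful claim. Once the functional-analytic/limiting issues at the cusp are settled, the definiteness of the Hessian should follow from a local, vertex-by-vertex computation exactly as in the compact polyhedral case, with the positivity of the cone curvatures of $g$ supplying the needed sign.
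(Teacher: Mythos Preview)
Your overall strategy---use the total scalar curvature functional and show its Hessian is non-degenerate---is the right one, but two key choices in your plan diverge from what actually works and would cause trouble if pursued.

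First, the coordinates. You propose parametrizing cusps with particles by dihedral angles (with edge lengths fixed). The paper instead observes that, once the boundary metric $(\Tor,g)$ is fixed, a convex polyhedral cusp with particles is completely determined by the \emph{particle lengths} $h=(h_i)_{i\in\Sigma}$, one number per cone point, modulo the diagonal $\langle\mathbb{1}\rangle$. So the deformation space is simply $\R^\Sigma/\langle\mathbb{1}\rangle$, and there is no need to track dihedral angles as independent variables or to worry about the constraints among them. Schl\"afli then gives the clean identity $\partial S/\partial h_i=\kappa_i$, so the Hessian is exactly the Jacobian $(\partial\kappa_i/\partial h_j)$. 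The truncation issues you anticipate at the cusp largely evaporate in these coordinates: a change of truncating horosphere shifts all $h_i$ by a common constant, and $\sum_i\kappa_i=0$ makes $\sum h_i\kappa_i$ well-defined; no limiting argument is needed.

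Second, and more seriously, your proposed mechanism for definiteness is off. You plan to decompose the Hessian into vertex-link contributions and invoke the \emph{positive singular curvature} of the boundary cone points. That hypothesis plays no role in the non-degeneracy argument. What the paper actually shows is that $\partial\kappa_i/\partial h_j\ge 0$ for $i\ne j$, with strict inequality exactly when $ij$ is a true edge (dihedral angle $<\pi$), and that the rows sum to zero. Hence the Hessian is a weighted graph Laplacian for the edge graph $\Gamma(M)$, and its nullspace is spanned by indicator vectors of connected components of $\Gamma(M)$. For a genuine convex cusp (all $\kappa_i=0$) the $1$-skeleton of the corresponding convex parabolic polyhedron is connected, so the only null vector is $\mathbb{1}$, which is zero in $\R^\Sigma/\langle\mathbb{1}\rangle$. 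The sign comes from \emph{convexity of the dihedral angles}, not from positivity of the boundary cone curvatures; and non-degeneracy comes from \emph{connectivity of the edge graph}, not from a vertex-by-vertex estimate. If you chase your plan as stated you will not find the definiteness you expect, because vertex links alone do not see the global connectivity that kills the kernel.
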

For definition of parabolic infinitesimal rigidity see Subsection \ref{sub:proofs rigidity}.

Our method involves study of \emph{convex polyhedral cusps with particles}. These are cone-manifolds that are very much like convex polyhedral cusps but have cone singularities along half-lines (the particles) that start at the cone singularities of the boundary.

For cusps with particles we prove a global rigidity statement:

\begin{Alphatheorem}\label{cusps w particles determined by curvatures}
Two convex polyhedral cusps with particles with the same metric on the boundary and the same singular curvatures are isometric.
\end{Alphatheorem}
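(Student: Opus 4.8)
The natural approach is a deformation/continuity argument combined with an infinitesimal rigidity statement, following the scheme used for the boundary-metric version (Theorems A and C). First I would introduce the relevant configuration space: fix the underlying combinatorics and the induced metric $g$ on the torus boundary, and consider the family of convex polyhedral cusps with particles realizing $g$, parametrized by the singular curvatures $\kappa=(\kappa_1,\dots,\kappa_n)$ along the particles (equivalently, by the cone angles). The claim is that the map sending such a cusp to its tuple of singular curvatures is injective. The standard way to get this is to show that this map is a local diffeomorphism between manifolds of the same dimension, and then upgrade local injectivity to global injectivity by a connectedness/properness argument (showing the space of admissible curvature data is connected and the map is proper onto it, so it is a covering of a simply connected set, hence a bijection).

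The key ingredient is therefore an \emph{infinitesimal rigidity with respect to the singular curvatures}: any first-order deformation of a convex polyhedral cusp with particles that fixes both the induced boundary metric and all singular curvatures is trivial. I would derive this from the second variation of the discrete total scalar curvature functional (the Hilbert–Einstein-type functional on cusps with particles mentioned in the abstract), whose critical points correspond to the actual polyhedral cusps; nondegeneracy of its Hessian on the appropriate slice gives exactly the needed infinitesimal rigidity. Concretely, one writes a deformation in terms of the edge lengths and the cone angles/curvatures, uses the Schläfli-type differential formula for the cusp-with-particles setting to control how the functional changes, and shows the relevant quadratic form is definite. This is essentially the particle-analogue of Theorem C, and I expect the bulk of the work (and the main obstacle) to be here: controlling the contribution of the particle half-lines to the variational formula, in particular handling the noncompactness at the cusp and making sure the boundary terms "at infinity" vanish, and checking that allowing the cone angles along particles to vary does not destroy the definiteness that one has in the smooth-boundary case.

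Once infinitesimal rigidity is in hand, the continuity method runs as follows. Let $\mathcal{M}_g$ be the space of convex polyhedral cusps with particles inducing $g$ on the boundary (with a fixed triangulation, refining as needed and checking the answer is independent of the triangulation), and let $\Phi:\mathcal{M}_g\to\mathcal{K}$ send a cusp to its curvature tuple, where $\mathcal{K}$ is the set of admissible curvatures. Infinitesimal rigidity plus an implicit-function-theorem argument shows $\Phi$ is a local diffeomorphism; for this one needs $\dim\mathcal{M}_g=\dim\mathcal{K}$, which follows from a dimension count on the space of polyhedral realizations versus the number of particles. One then shows $\mathcal{K}$ is connected (it should be an open convex-type region cut out by angle constraints $0<\theta_i<2\pi$ and Gauss–Bonnet-type inequalities forced by positivity of the boundary cone curvatures) and that $\Phi$ is proper — the compactness argument being that a sequence of convex cusps with particles with controlled boundary metric and controlled curvatures cannot degenerate, since degeneration would force either a vertex to escape to the cusp or a dihedral angle to collapse, both excluded by the bounds. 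A proper local diffeomorphism onto a connected (simply connected) base is a bijection; composing with the fact that the boundary metric alone recovers $g$, this yields that the pair (boundary metric, singular curvatures) determines the cusp with particles up to isometry, which is the assertion of Theorem E. The properness/compactness step is the second place I would expect technical difficulty, precisely because of the cusp end.
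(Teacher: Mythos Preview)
Your plan has a genuine gap: the infinitesimal rigidity you want to use as the local step is \emph{false} for convex polyhedral cusps with particles. The Hessian of the total scalar curvature $S$ on $\M(\Tor,g)$ is only negatively \emph{semi}definite; its nullspace at $M$ is spanned by the indicator vectors of the connected components of the edge graph $\Gamma(M)$ (Lemma~\ref{lem:HessKernel}). For genuine cusps (zero curvatures) this graph is connected and the Hessian is nondegenerate (Lemma~\ref{lem:HessCusp}), but for general cusps with particles $\Gamma(M)$ can be disconnected --- the example in Subsection~\ref{subsec:CuspsParticles} has an isolated vertex --- and the paper explicitly remarks that cusps with particles are in general not infinitesimally rigid. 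So the implicit-function step in your continuity scheme breaks down precisely at the points where you would need it.

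The paper avoids this entirely. It has already shown (Proposition~\ref{prp:SpaceOfCusps}) that $\M(\Tor,g)$, parametrized by particle lengths $[h]\in\R^\Sigma/\la\mathbb{1}\ra$, is a compact convex set, and that $\grad S=\kappa$. The crucial observation is that although the Hessian may degenerate, $S$ is nevertheless \emph{strictly} concave on $\M(\Tor,g)$ (Lemma~\ref{lem:SConc}); the proof exploits the fact that a non-simply-connected face forces the distance-like functions on it to differ by a constant. One then invokes the elementary fact (Lemma~\ref{prp:ConvHomeo}) that the gradient of a strictly concave $C^1$ function on a compact convex set is injective. Hence $[h]\mapsto\kappa$ is injective, and since $[h]$ determines the cusp (Proposition~\ref{prp:HeightsToCusp}), two cusps with the same boundary metric and the same $\kappa$ coincide. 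No continuity method, no properness, no analysis of the target space $\mathcal{K}$ is needed.
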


\subsection{Related work}\label{sub:related works}

\subsubsection{Towards a general realization statement.}

Theorem \ref{realisation theorem} is similar to a famous theorem of A.D. Alexandrov:
\begin{theorem}[A.~D.~Alexandrov, \cite{alex42, AlexandrovBook}] \label{AlexThm}
Let $g$ be a metric of constant curvature $K$ with conical singularities of positive singular curvature on the $2$-sphere~$\Sph$. Then $(\Sph,g)$ can be realized as a convex polyhedral surface in the $3$-dimensional Riemannian space-form of curvature $K$. The realization is unique up to an ambient isometry.
\end{theorem}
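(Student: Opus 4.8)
The plan is to prove this by the deformation (continuity) method, realizing the statement as an instance of invariance of domain. Fix the number $n\ge 3$ of cone points and let $X_K^3$ denote the $3$-dimensional space-form of curvature $K$. Let $\mathcal M_n$ be the space of constant-curvature-$K$ cone metrics on $\Sph$ with exactly $n$ cone points of positive singular curvature, taken up to isometry, and let $\mathcal P_n$ be the space of convex polyhedra with $n$ vertices in $X_K^3$, taken up to ambient isometry. A Troyanov-type count (the $n$ cone angles free, the positions of the cone points constrained by Gauss--Bonnet) gives $\dim\mathcal M_n = (2n-6)+n = 3n-6$, while $n$ points in $X_K^3$ modulo the $6$-dimensional group $\mathrm{Iso}(X_K^3)$ give $\dim\mathcal P_n = 3n-6$; crucially the two dimensions agree, and $\mathcal M_n$ is connected. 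Since the induced boundary metric of a convex $n$-vertex polyhedron is a cone metric with $n$ cone points each of angle $<2\pi$, sending a polyhedron to its boundary metric defines a map $\Phi\colon\mathcal P_n\to\mathcal M_n$, and the assertion is exactly that $\Phi$ is a homeomorphism.

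First I would show that $\Phi$ is a local homeomorphism. Continuity is immediate from the dependence of the intrinsic metric on the vertices. The essential input is \emph{infinitesimal rigidity}: every nontrivial first-order isometric deformation of a convex polyhedron changes the induced metric, i.e.\ $d\Phi$ is injective at each point. This is the Cauchy--Dehn infinitesimal rigidity theorem for convex polyhedra, established by the sign-counting (index) argument around the edges at each vertex, and it carries over to the spherical and hyperbolic space-forms. Because $\mathcal P_n$ and $\mathcal M_n$ are manifolds of the same dimension, injectivity of $d\Phi$ upgrades to $\Phi$ being a local diffeomorphism, in particular an open map.

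Next I would prove that $\Phi$ is proper. Given polyhedra whose boundary metrics converge in $\mathcal M_n$, one must show the polyhedra subconverge within $\mathcal P_n$, that is, rule out degenerations: edge lengths and diameters must stay bounded away from $0$ and $\infty$, no two cone points may collide, and for $K>0$ the polyhedron must not spread over too large a region of $\Sph^3$. These bounds follow from convergence of the intrinsic metric together with convexity, which translates intrinsic control into extrinsic control and uses the positivity of the cone curvatures. Granting properness, $\Phi$ is a proper local homeomorphism into the connected manifold $\mathcal M_n$, so its image is open, closed, and nonempty, hence all of $\mathcal M_n$: this is the \emph{existence} half. I expect properness to be the main obstacle, since it is precisely where the global geometry of degenerating convex surfaces, and the positivity hypothesis, must be exploited.

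Finally, \emph{uniqueness} is the injectivity of $\Phi$ and comes from the global rigidity of convex polyhedra: two convex polyhedra in $X_K^3$ with isometric induced boundary metrics are congruent. This is Cauchy's theorem and its space-form analogues, proved via the combinatorial sign-change lemma applied to the graph of edges along which the dihedral angles differ. Injectivity together with the surjective open map of the previous step shows $\Phi$ is a homeomorphism, which is exactly the existence-and-uniqueness statement, the phrase ``up to an ambient isometry'' being built into the quotient by $\mathrm{Iso}(X_K^3)$.
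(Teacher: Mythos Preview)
The paper does not prove this theorem at all: it is quoted as a classical result of Alexandrov with references \cite{alex42, AlexandrovBook}, and serves only as background motivation in Subsection~\ref{sub:related works}. There is therefore no ``paper's own proof'' to compare against.

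That said, your sketch is precisely what the paper calls \emph{Alexandrov's method} (the deformation method) in its Remarks subsection: one shows that the induced-metric map between equidimensional spaces is a local homeomorphism via rigidity, then upgrades to a global homeomorphism by topological arguments. So your approach is the historically correct one for this statement, and the paper explicitly acknowledges it as the standard route, contrasting it with the variational method developed there for Theorem~\ref{Base theorem}.

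Two technical remarks on your outline. First, the spaces $\mathcal P_n$ and $\mathcal M_n$ are not obviously smooth manifolds, because the combinatorial type of a polyhedron (its face lattice) can change; Alexandrov's original argument works with invariance of domain on topological manifolds rather than with differentials, and local injectivity is extracted from Cauchy's \emph{global} rigidity rather than from infinitesimal rigidity---so invoking $d\Phi$ requires care near polyhedra where edges degenerate. Second, your dimension count for $\mathcal M_n$ is phrased oddly (Gauss--Bonnet constrains the cone angles, and for $K\ne 0$ also the area, not the ``positions''); the answer $3n-6$ is correct for $K=0$, but for $K\ne 0$ one should check it more carefully. These are refinements rather than fatal gaps; the overall strategy is sound.
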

Clearly, the positivity condition on the singular curvatures is necessary if one wants to realize the given metric as a convex polyhedral surface in a Riemannian space-form. In \emph{Lorentzian space-forms}, convex space-like polyhedral surfaces have most often singularities of \emph{negative} singular curvature, \cite{schlorentz,Fillastre2}.
\begin{theorem}[Rivin, Rivin--Hodgson, \cite{rivinthese,RivHod}] \label{RivHod}
Let $g$ be a spherical metric with negative cone singularities on $\Sph$ and lengths of closed geodesics greater than $2\pi$. Then $(\Sph,g)$ can be uniquely realized as a convex polyhedral surface in de Sitter space.
\end{theorem}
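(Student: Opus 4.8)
The plan is to reformulate the statement through the polar duality between convex bodies in $\H^3$ and convex space-like polyhedral surfaces in de Sitter space $\mathrm{dS}^3$, and then to prove the resulting realization/rigidity statement by a continuity (deformation) argument, in the spirit of Alexandrov's proof of Theorem \ref{AlexThm}. Recall that a compact convex polyhedron $P\subset\H^3$ has a polar dual $P^{*}$, a convex space-like polyhedral surface in $\mathrm{dS}^3$: vertices of $P$ correspond to space-like faces of $P^{*}$ and faces of $P$ to vertices of $P^{*}$, while the edge lengths of $P^{*}$ record the exterior dihedral angles of $P$. The metric induced on $P^{*}$ is spherical (curvature $+1$) with cone points at the vertices dual to the faces of $P$; convexity of $P$ forces these cone angles to exceed $2\pi$, i.e. the singular curvatures are negative, and the requirement that every closed geodesic of the induced metric be longer than $2\pi$ turns out to be exactly the nondegeneracy condition ensuring that the surface bounds a genuine three-dimensional polyhedron rather than flattening. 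Thus it suffices to show that the map sending a convex polyhedron to the metric induced on its de Sitter dual is a bijection onto the set of admissible spherical cone metrics.

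Denote by $\mathcal M$ the set of spherical cone metrics on $\Sph^2$ with all cone angles greater than $2\pi$ and all closed geodesics longer than $2\pi$, and by $\mathcal P$ the set of convex space-like polyhedral surfaces in $\mathrm{dS}^3$ modulo isometry. Let $\Phi\colon\mathcal P\to\mathcal M$ be the induced-metric map. Fixing a combinatorial type (equivalently, a geodesic triangulation of the metrics) makes both sides smooth finite-dimensional parameter spaces, and I would first carry out a dimension count — edge lengths of $P^{*}$ subject to the spherical closing relations on one side, triangle side lengths subject to the matching conditions on the other — verifying that the two dimensions agree. The core of the argument is then to show that $\Phi$ is a \emph{local homeomorphism}; since the dimensions match, by the inverse function theorem this reduces to injectivity of its differential, that is, to an \emph{infinitesimal rigidity} statement: a convex space-like polyhedral surface in $\mathrm{dS}^3$ admits no nontrivial first-order deformation preserving its induced metric.

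Proving this infinitesimal rigidity is the main obstacle. I would attack it by combining a Schläfli-type variational identity with a Cauchy-type combinatorial argument, transported to de Sitter geometry. A first-order flex preserving all face metrics assigns to each edge a variation of its length; the spherical Schläfli formula relates these to the variations of the cone angles, and one argues that a nonzero flex would force, around each vertex, a sign pattern of angle changes forbidden by the de Sitter analogue of Cauchy's arm lemma together with the global parity (index) count on $\Sph^2$. Equivalently, one may run the variational machinery already used for Theorem \ref{AlexThm} and its Lorentzian counterparts, where the relevant discrete curvature functional has a nondegenerate Hessian that encodes precisely this infinitesimal rigidity. The negativity of the singular curvatures (cone angles larger than $2\pi$) is what makes the pertinent quadratic form definite, so this hypothesis enters in an essential way.

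It remains to globalize. I would prove that $\Phi$ is \emph{proper}: if a sequence of convex de Sitter surfaces has induced metrics converging in $\mathcal M$, then the surfaces subconverge, after ambient isometries, to a convex surface, with no collapse, no escape to infinity, and no vertex degeneration. Here both admissibility conditions are used: the cone-angle bound controls the local geometry at the vertices, while the hypothesis that every closed geodesic is longer than $2\pi$ excludes the flattening and splitting degenerations, just as a length bound prevents a convex body from becoming unbounded. A proper local homeomorphism onto a connected — and, as one checks, simply connected (indeed cell-like) — target $\mathcal M$ is a trivial covering, hence a homeomorphism, yielding at once existence (surjectivity) and uniqueness (injectivity) of the realization. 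Finally, the a priori dependence on the chosen combinatorial type is removed in the standard way: every metric in $\mathcal M$ admits an admissible geodesic triangulation, and realizations sharing a common refinement must agree, so the realization is intrinsic to the metric. Tracing the duality back to $\H^3$ then gives the stated realization and uniqueness in de Sitter space.
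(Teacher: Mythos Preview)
The paper does not prove this theorem. Theorem~\ref{RivHod} appears in the ``Related work'' subsection as a known result of Rivin and Rivin--Hodgson, with citations to the original sources; no proof is given here. The paper's own contributions are Theorems~A through~D on hyperbolic cusps, proved by the variational method, which the authors explicitly contrast with the deformation method you describe (see the Remarks at the end of Section~1). So there is no proof in this paper to compare your proposal against.

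That said, your sketch does follow the broad outline of the original Rivin--Hodgson argument: polar duality with compact convex hyperbolic polyhedra, then an Alexandrov-style continuity method (dimension count, infinitesimal rigidity, properness, topology of the target). The paper itself remarks that ``a more traditional way to state Theorem~\ref{RivHod} is in terms of the dual metric of a convex hyperbolic polyhedron,'' which is exactly the reformulation you begin with. One point worth flagging: the paper notes that the uniqueness actually proved in the Rivin--Hodgson paper is slightly weaker than what is stated here, the full statement requiring later work; your sketch does not distinguish these, and the properness and connectedness steps you pass over quickly are where the subtleties lie.
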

Actually, the uniqueness statement proved in \cite{RivHod} is slightly weaker, see \cite{schlorentz}.

Realization theorems are proved for compact surfaces of genus $\geq 2$ in hyperbolic space \cite{Fillastre1} and in Lorentzian space-forms \cite{Schpoly,Fillastre2}. As a matter of fact, only one case of constant curvature metric with conical singularities of constant sign on compact surfaces has not been treated yet: that of metrics on the torus
which can be realized in de Sitter space. This is the subject of \cite{andreevtorus} that uses the same method as the present paper.
Putting all together would lead to a solution of the following problem.

\begin{gstatement}
Let $g$ be a metric of constant curvature $K$ with conical singularities of a constant sign $\epsilon \in \{-,+\}$ on a compact surface $S$. In the case $K=1,\epsilon=-$ we require the lengths of contractible geodesics to be $>2\pi$. Then the universal cover of $(S, g)$ can be uniquely realized in $M_K^{\epsilon}$ as a convex polyhedral surface invariant under the action of a representation of $\pi_1(S)$ in a $3$-dimensional subgroup of $\mathrm{Iso}^+(M_K^{\epsilon})$.
\end{gstatement}

Here $M_K^+$ is the Riemannian space-form of curvature $K$, and $M_K^-$ is the Lorentzian space-form of curvature $K$.

A more traditional way to state Theorem \ref{RivHod} is in terms of the dual metric of a convex hyperbolic polyhedron, which is obtained with the help of the Gauss map, \cite{RivHod}. If the combinatorics of the polyhedron is known, then the dual metric is defined by the values of the dihedral angles. This implies Andreev's Theorem \cite{And70} about compact acute-angled hyperbolic polytopes. Similarly, realization theorems for higher genus in de Sitter space \cite{Schpoly,Fillastre2,andreevtorus} imply existence and uniqueness of circle patterns with acute (exterior) intersection angles between the circles. A more thorough discussion will be given in \cite{andreevtorus}.

\subsubsection{Hyperbolic manifolds with convex polyhedral boundary.}

Here we restrict our attention to the hyperbolic cases of theorems above. A reformulation of Theorem \ref{AlexThm} is that each hyperbolic cone metric on the sphere with singularities of positive curvature can be uniquely extended to a hyperbolic metric with a convex polyhedral boundary on the ball.

In the same way, hyperbolic realization theorem for genus~$\geq 2$ \cite{Fillastre1} says that the metric inside a ``Fuchsian manifold'' with convex polyhedral boundary is uniquely determined by the metric on the boundary. Both are special cases of the following statement.

\begin{gstatement} \label{g:AB}
Let $M$ be a compact connected $3$-manifold with boundary, and let $M$ admit a complete hyperbolic convex cocompact metric. Then each hyperbolic cone metric on $\partial M$ with singularities of positive curvature can be uniquely extended to a hyperbolic metric on $M$ with convex polyhedral boundary.
\end{gstatement}

In the case of smooth strictly convex boundary the analog was proved in \cite{Schconvex} (the case of the ball should follow from the works of Alexandrov and Pogorelov). In both polyhedral and smooth cases the same problem can be posed for geometrically finite manifolds. Theorem \ref{Base theorem} provides the simplest polyhedral case of such generalization.
Similar questions can be posed about the dual metric on the boundary. In the smooth compact case the dual metric is simply the third fundamental form, and the problem is solved also in \cite{schconvlor,SchLab,Schconvex}.

\subsubsection{Manifolds with particles.}

The term ``manifold with particles'' comes from the physics literature, where the manifolds are Lorentzian and the singularities are along time-like geodesics. The definition can be naturally extended to certain hyperbolic cone-manifolds with singularities along infinite lines, see \emph{e.g.} \cite{schMinparticles,schparticlesfuchs,schparticlesAdS}. By analogy, we have adopted the same terminology for our ``cusps with particles''.

Theorem \ref{cusps w particles determined by curvatures} states that a convex polyhedral cusp with particles is uniquely determined by the metric on its boundary and the singular curvatures along the particles. One can ask what boundary metric and particles curvatures can be realized.

\begin{gstatement}
Let $g$ be a hyperbolic cone metric on $\Tor$ with $n$ singularities of positive curvature.  What are the necessary and sufficient conditions on the numbers $\kappa_1, \ldots, \kappa_n$ so that there exists a cusp with particles of curvatures $\kappa_1, \ldots, \kappa_n$ and with convex polyhedral boundary isometric to $g$?
\end{gstatement}

One obvious condition on $(\kappa_i)$ is $\sum_{i=1}^n \kappa_i = 0$, see Lemma \ref{Lem:sumcurvaturearezero}.

\subsubsection{Weakly convex star-shaped parabolic polyhedra.}\label{sub:weakly convex}

A \emph{star-shaped parabolic polyhedron} is a pair $(P,G)$, where $P \subset \H^3$ is the cone with the apex $c \in \partial \overline{\H^3}$ over a polyhedral surface that projects bijectively onto horospheres with center $c$, and $G$ is a discrete subgroup of $\mathrm{Iso}^+(\mathbb{H}^3)$ that acts freely cocompactly on horospheres with center $c$ and leaves $P$ invariant. Clearly, every convex parabolic polyhedron is star-shaped, but the converse does not hold.

A star-shaped parabolic polyhedron is called \emph{weakly convex}, if its vertices are vertices of some convex polyhedron.

By using the argument from \cite{schweakconvex}, we prove the following theorem.

\begin{Alphatheorem}\label{thm:weak infinitesimal rigidity}
Weakly convex star-shaped parabolic polyhedra  are parabolic infinitesimally rigid.
\end{Alphatheorem}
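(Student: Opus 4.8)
The plan is to mimic the strategy of \cite{schweakconvex}, which reduces infinitesimal rigidity of weakly convex polyhedra to that of convex polyhedra by an interpolation argument through the convex hull. First I would set up the notion of parabolic infinitesimal rigidity precisely for star-shaped parabolic polyhedra: an infinitesimal deformation is an equivariant assignment of a Killing field to each face (or a vector field on $P$ whose restriction to each face is a Killing field of $\H^3$), modulo global Killing fields commuting with the parabolic action, and the polyhedron is parabolic infinitesimally rigid if every such deformation that fixes the induced metric on $\partial P/G$ to first order is trivial. The key observation is that if $(P,G)$ is weakly convex star-shaped, its set of vertices $V$ lies on the boundary of the convex hull $\mathrm{conv}(V)$, which (together with the cone structure toward the cusp point $c$) gives a \emph{convex} parabolic polyhedron $(\widehat P, G)$; moreover $\partial P$ and $\partial\widehat P$ share the same vertex set, and the cells of $\partial P$ not on $\partial\widehat P$ sit in the interior of $\widehat P$.

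The main steps would be: (1) Suppose $\dot g$ is a first-order isometric deformation of $\partial P$, i.e. an equivariant family of Killing fields per face with matching first-order length variations along edges. (2) Show this extends to a deformation of the convex hull $\widehat P$: each vertex $v\in V$ moves by some velocity $\dot v$ determined by the face-Killing fields incident to $v$, and since $V$ consists of vertices of the convex polyhedron $\widehat P$, the velocities $(\dot v)_{v\in V}$ determine a unique first-order flex of $\widehat P$ as a convex polyhedron — here one uses that for a convex polyhedron the edge-length variations are determined by the vertex velocities, and conversely the Killing fields on the faces of $\partial P$ induce consistent velocities at all vertices of $\widehat P$. (3) Relate the induced metric variation on $\partial\widehat P$ to that on $\partial P$: the point is that the ``extra'' interior faces of $\partial P$ do not obstruct anything because the full 3-dimensional deformation of $\widehat P$ restricts to the required deformation; more precisely, an isometric deformation of $\partial P$ together with the convexity of $\widehat P$ forces the induced metric on $\partial\widehat P$ to be deformed isometrically to first order as well (this is the heart of the Schlenker-type argument, using that a flex of a convex cap interpolating given boundary-vertex velocities has controlled edge-length variation). (4) Apply Theorem \ref{infinitesimal rigidity} (parabolic infinitesimal rigidity of convex parabolic polyhedra) to conclude that the flex of $\widehat P$ is trivial, i.e. comes from a global Killing field $X$ commuting with $G$. (5) Finally, deduce that the original deformation $\dot g$ of $\partial P$ is trivial: since all vertices of $P$ move by the restriction of $X$, and each face-Killing field agrees with $X$ at the (at least three, affinely spanning) vertices of that face, each face-Killing field equals $X$, so the deformation is the trivial one.

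The main obstacle I expect is step (3): carefully controlling how an isometric first-order deformation of the non-convex surface $\partial P$ propagates to an isometric first-order deformation of the convex hull boundary $\partial\widehat P$. One must handle the combinatorial discrepancy between the two cell structures (faces of $\partial P$ lying strictly inside $\widehat P$, and faces of $\widehat P$ not appearing in $\partial P$) and verify that the length-preservation of edges of $\partial P$, combined with the rigidity of the ``solid'' region between $\partial P$ and $\partial\widehat P$, yields length-preservation of edges of $\partial\widehat P$. This is exactly where the hypothesis of weak convexity (rather than mere star-shapedness) is used, and where the construction in \cite{schweakconvex} does the real work; I would adapt that construction, taking care that all vector fields and identifications are equivariant under $G$ and that the non-compact, parabolic nature of the cusp causes no trouble — the group $G$ acts cocompactly on horospheres, so all the relevant data live on a compact fundamental domain and the arguments for compact convex polyhedra carry over. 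A secondary technical point is checking that the space of ``trivial'' deformations is exactly the centralizer of the parabolic subgroup $G$ in the Killing fields of $\H^3$, which is what makes the statement of parabolic infinitesimal rigidity consistent between $P$ and $\widehat P$.
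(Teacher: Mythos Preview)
Your approach differs substantially from the paper's. The paper does not try to extend an infinitesimal isometric deformation of $\partial P$ to one of the convex hull boundary $\partial\widehat P$. Instead it stays with the variational framework: exactly as for convex parabolic polyhedra, a parabolic deformation of $(\partial P,G)$ corresponds to a vector $\dot r$ in the nullspace of the Hessian $\Delta(P/G)$ of the total scalar curvature, so rigidity follows once $\Delta(P/G)$ is shown to be negative definite. That in turn is obtained by the simplex-removal argument of \cite[Section~4]{schweakconvex}: one passes from the convex parabolic polyhedron $\widehat P$ to $P$ by removing finitely many simplices in a fundamental domain, and each removal changes $\Delta$ by the addition of a negative semidefinite matrix; since $\Delta(\widehat P/G)$ is negative definite by Lemma~\ref{lem:HessCusp}, so is $\Delta(P/G)$.

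Your step~(3) is a genuine gap, and the appeal to \cite{schweakconvex} does not fill it: that paper's ``real work'' is precisely the Hessian comparison just described, not a geometric mechanism for transporting flexes from $\partial P$ to $\partial\widehat P$. Concretely, the vertex velocities coming from an isometric first-order deformation of $\partial P$ preserve the edge lengths of $\partial P$, but there is no a~priori reason they preserve the lengths of edges of $\partial\widehat P$ that are not edges of $\partial P$. In the simplest local picture --- a reflex edge $ik$ of $\partial P$ replacing a convex edge $jl$ of $\partial\widehat P$ across a quadrilateral $ijkl$ --- the constraints imposed by the faces $ijk$ and $ikl$ of $\partial P$ alone leave the first-order variation of $|jl|$ free; whatever forces it to vanish has to come from the global surface, and establishing that is essentially the rigidity statement you are after. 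Star-shapedness gives you nothing here (the radii from $c$ are not fixed by the deformation), and weak convexity is used in the paper only to guarantee the simplicial decomposition between $\partial\widehat P$ and $\partial P$ that feeds the Hessian comparison, not to transport deformations. If you try to ``adapt the construction in \cite{schweakconvex}'' you will find yourself reproducing the Hessian argument rather than the kinematic one you outlined.
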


\subsection{Sketch of the proof and plan of the paper}\label{sketch}

We prove Theorem \ref{Base theorem} by the variational method. The variational method consists in identifying the object we are looking for with a critical point of a functional. If a concave functional on a convex domain attains its maximum in the interior, then the maximum point is the unique critical point. This yields both the existence and uniqueness statement for the desired object.

The domain that we consider is the space $\M(\Tor, g)$ of convex polyhedral cusps with particles and with boundary $(\Tor, g)$. A cusp with particles is glued from semi-ideal pyramids with the common ideal apex so that the pyramids allow a consistent \emph{truncation} by horospheres.
A truncation yields a collection $(h_i)_{i \in \Sigma}$ of truncated particle lengths, one for each singular point $i \in \Sigma$. Change of a truncation results in adding a common constant to all of the $h_i$. We call the corresponding equivalence class $[h]$ the \emph{particle lengths}.

Section \ref{sec:SpaceOfCusps} contains two important results. First, we show that for a given metric $g$ on the boundary, a convex polyhedral cusp with particles is uniquely determined by its particle lengths $[h]$. That is to say, there don't exist two convex cusps with particles with different face structures and same particle lengths. Second, we show that $\M(\Tor, g)$ is a compact convex subset of $\R^\Sigma / \la \mathbb{1} \ra$.

The functional on $\M(\Tor,g)$ is given by the formula
$$
S(M) = -2 \Vol(M) + \sum h_i \kappa_i + \sum \ell_e(\pi - \theta_e).
$$
Here the first sum ranges over all singularities of the metric $g$, and $\kappa_i$ denotes the singular curvature at the $i$th particle. The sum does not depend on the choice of a truncation due to $\sum \kappa_i = 0$. The second sum is of a similar nature: here $\ell_e$ is the length of a boundary edge $e$, and $\theta_e$ is the dihedral angle at this edge. Functional $S$ is the discrete analog of the total scalar curvature, which is also known as the Hilbert-Einstein functional.

Schl\"afli's formula implies
$$
\frac{\partial S}{\partial h_i} = \kappa_i.
$$
Thus a critical point of $S$ corresponds to a convex polyhedral cusp with vanishing curvatures of particles. An explicit computation of derivatives shows that the Hessian of $S$ is negatively semidefinite. Although at some points the Hessian might be degenerate, it turns out that the functional $S$ is strictly concave on $\M(\Tor, g)$. Functional $S$ is investigated in Section \ref{sec:scalar curvature}.

Proofs of Theorems \ref{Base theorem} --- \ref{thm:weak infinitesimal rigidity} are given is Section \ref{sec:proofs}. All of them use either the non-degeneracy of the Hessian or the strict concavity of $S$.

\subsection{Remarks}
In the physics literature, the functional $\sum h_i \kappa_i$ for a manifold built up from Euclidean simplices is known as the \emph{Regge functional}. In mathematics, the boundary term $\sum \ell_e (\pi - \theta_e)$ appeared in the works of Steiner and Minkowski. Minkowski also showed that this is the correct discrete analog of the total mean curvature of the boundary of a convex body. In the smooth case, Blaschke and Herglotz \cite{BH37} suggested to use the Hilbert-Einstein functional to approach Weyl's problem, which is a smooth analog of Alexandrov's theorem in $\R^3$: show that any convex Riemannian metric on the sphere is uniquely realized as the boundary of a convex body. Recently, Michael Anderson \cite{And02} proposed an approach to the geometrization of 3-manifolds via scalar curvature type functionals.

The variational method used in the present paper was earlier applied in \cite{Izmestiev} to prove the existence and uniqueness of a Euclidean convex cap with given metric on the boundary. Functional $S$ was also used in \cite{BobenkoIzmestiev} to give a new proof of Alexandrov's theorem in $\R^3$. In \cite{BobenkoIzmestiev}, the matter was complicated by the fact that $S$ was neither concave nor convex.

An alternative method of proving realization statements like Theorems \ref{AlexThm}, \ref{RivHod}, see also \cite{Fillastre1, Fillastre2}, is the deformation method, also known as Alexandrov's method. The idea is to consider the map between the space of convex polyhedral surfaces and the space of cone metrics that associates to a surface its induced metric. The key point is to prove the local rigidity: a deformation of a surface always induces a deformation of a metric. In other words, the map ``induced metric'' is a local homeomorphism. Then, by topological arguments, this map is shown to be a global homeomorphism. Note a different role of the infinitesimal rigidity in the two approaches. Being a key lemma in the deformation method, it is a byproduct in the variational method (non-degeneracy of the Hessian at a critical point).

The variational method is constructive: a computer program can be written that finds the critical point of a functional numerically. For Alexandrov's theorem in $\R^3$, such a program was created by Stefan Sechelmann and is available at {\tt http://www.math.tu-berlin.de/geometrie/ps/software.shtml}.

\subsection{Acknowledgments}

Both authors want to thank Cyril Lecuire and Jean-Marc Schlenker for useful conversations, as well as Stefan Sechelmann who made Figure \ref{fig:ellipsoid}.


\section{Definitions and preliminaries}\label{Definitions}
In Subsection \ref{subsec:Cusps} we convex polyhedral cusps. These are hyperbolic cusps whose metric in the neighborhood of boundary points is modelled on convex polyhedral cones. A \emph{convex polyhedral cone} is the intersection of finitely many halfspaces in $\H^3$ whose boundary planes pass through one point. Then we define convex parabolic polyhedra and show that they are universal covers of convex polyhedral cusps. In Subsection \ref{subsec:CuspsParticles} we define hyperbolic cusps with particles as cone-manifolds glued from semi-ideal pyramids. Finally, Subsection \ref{sub:background} contains some hyperbolic geometry needed in the sequel.

\subsection{Cusps and parabolic polyhedra.} \label{subsec:Cusps}

\begin{definition} \label{def:Cusp}
A \emph{hyperbolic cusp with boundary} is a complete hyperbolic manifold of finite volume homeomorphic to $\Tor \times [0, +\infty)$. We say that the cusp has a \emph{convex polyhedral boundary} if every point on the boundary has a neighborhood isometric to a neighborhood of a point on the boundary of a convex polyhedral cone in $\H^3$.
\end{definition}
We call hyperbolic cusps with convex polyhedral boundary briefly \emph{convex polyhedral cusps}.
Clearly, the induced metric on the boundary of a convex polyhedral cusp is a hyperbolic metric with cone singularities of positive curvature. It is easy to define vertices, edges and faces of a convex polyhedral cusp. Vertices are exactly the cone singularities of the metric on the boundary. Every edge is a geodesic joining the vertices. Edges cut the boundary $\partial M$ of the cusp $M$ into faces, which are maximal connected open subsets of $\partial M$ that bound $M$ geodesically.

\begin{definition}
A \emph{convex parabolic polyhedron} in $\H^3$ is a pair $(P,G)$, where $P \subset \H^3$ is the convex hull of a discrete set of points, and $G$ is a discrete subgroup of $\mathrm{Iso}^+(\H^3)$ that acts freely cocompactly on a horosphere in $\H^3$ and leaves $P$ invariant.
\end{definition}
Clearly, the vertex set of $P$ is $G$-invariant. Since it is discrete, it is the union of finitely many orbits of the group $G$. The simplest example of a convex parabolic polyhedron is the convex hull of one orbit, see Figure \ref{fig:ellipsoid}.

The group $G$ has a unique fixed point $c$ in $\partial \overline{\H^3}$. Clearly, $c$ lies in the closure of $P$. We call $c$ the \emph{center} of the polyhedron $P$.

\begin{lemma} \label{lem:PolToCusp}
Let $(P,G)$ be a convex parabolic polyhedron. Then the quotient space $P/G$ is a convex polyhedral cusp.
\end{lemma}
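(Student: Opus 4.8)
The plan is to show that $P/G$ is a complete finite-volume hyperbolic manifold homeomorphic to $\Tor \times [0,+\infty)$ with convex polyhedral boundary, matching Definition \ref{def:Cusp}. First I would fix a horosphere $H$ centered at $c$ on which $G$ acts freely and cocompactly, so $H/G$ is a torus and the region of $\H^3$ between $H$ and $c$ (the horoball $B$) quotients to a standard hyperbolic cusp $B/G \approx \Tor \times [0,+\infty)$. The key geometric input is that $c \in \partial\overline{\H^3}$ lies in the closure of the convex set $P$ and is fixed by $G$, so, after enlarging the horosphere if necessary, the horoball $B$ is entirely contained in $P$; thus $P$ contains a neighborhood of the cusp point. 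I would then argue that $P$ deformation retracts $G$-equivariantly onto $B$ by pushing along geodesics toward $c$ (or along the horospherical foliation), which shows $P/G$ is homeomorphic to $B/G \approx \Tor \times [0,+\infty)$.

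Next I would address completeness and finite volume. Completeness of $P/G$ follows because $P$ is a closed convex subset of the complete space $\H^3$, hence complete, and $G$ acts properly discontinuously (being discrete and acting freely cocompactly on a horosphere, hence on $P$ by the retraction argument), so the quotient of a complete space by a properly discontinuous free isometric action is complete. For finite volume: outside a fixed horoball the set $P$ projects to a compact region of the torus cross an interval — more precisely, $P \setminus B$ maps onto a compact fundamental domain's worth of material because the vertices of $P$ form finitely many $G$-orbits and $P$ is their convex hull — so $\Vol(P/G) = \Vol(B/G) + \Vol((P\setminus B)/G) < \infty$, the first term being the finite volume of a standard cusp and the second the volume of a compact region.

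Finally, the convex polyhedral boundary condition: $\partial P$ is locally, near any of its points, the boundary of an intersection of finitely many half-spaces (since $P$ is a locally finite convex polyhedron — its vertex set is discrete and $G$-cocompact modulo the finitely many orbits, so only finitely many faces meet any compact set), and this descends to $P/G$ because $G$ acts freely, so small enough neighborhoods in $\partial P$ project isometrically. Hence every boundary point of $P/G$ has a neighborhood isometric to a neighborhood of a boundary point of a convex polyhedral cone. I expect the main obstacle to be the topological step: verifying carefully that $P$ equivariantly retracts onto the horoball $B$ — equivalently, that the radial projection from $c$ is a well-defined proper map exhibiting $P$ as (the total space of) a half-line bundle over $H/G$ — since this requires knowing that every geodesic ray from $c$ either stays in $P$ or exits through $\partial P$ exactly once, which uses convexity of $P$ together with the fact that $B \subset P$.
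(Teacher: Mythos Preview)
Your proposal is correct and follows essentially the same approach as the paper. The only organizational difference is that the paper packages things via a sandwich $B_1 \subset P \subset B_2$ between two concentric horoballs (which exists because the vertex set is a finite union of $G$-orbits); the inclusion $P \subset B_2$ then gives completeness and finite volume in one stroke, whereas you argue these two points separately --- but the content is the same.
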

\begin{proof}
It is immediate that $G \cong \Z^2$ and $B/G \approx \Tor \times [0,+\infty)$ for any horoball $B$ centered at $c$. Since the vertex set of $P$ is the union of finitely many orbits, there are horoballs $B_1$ and $B_2$ centered at $c$ such that $B_1 \subset P \subset B_2$. It is easy to see that any geodesic passing through $c$ intersects the boundary of $P$ at exactly one point. It follows that $P/G$ is homeomorphic to $\Tor\times[0,+\infty)$. From $P \subset B_2$ it also follows that $P/G$ has finite volume. The manifold $P/G$ is complete since it is a closed subset of a complete manifold $B_2/G$. Finally, $P/G$ has convex polyhedral boundary because $P$ has.
\end{proof}

Let $M$ be a convex polyhedral cusp. By definition it is locally convex, hence it is convex, \cite[Corollary I.1.3.7.]{Notesonnotes}. It follows that the developing map
$D: \widetilde{M} \to \H^3$ is an isometric embedding, \cite[Proposition I.1.4.2.]{Notesonnotes}. The action of the fundamental group $\pi_1M \cong \Z^2$ on $\widetilde{M}$ by deck transformations yields a representation $\rho: \pi_1M \to \mathrm{Iso}^+(\H^3)$.

\begin{lemma} \label{lem:CuspToPol}
The pair $(D(\widetilde{M}), \rho(\pi_1M))$ is a convex parabolic polyhedron.
\end{lemma}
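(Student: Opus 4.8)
The goal is to show that $(D(\widetilde M),\rho(\pi_1 M))$ satisfies the two requirements in the definition of a convex parabolic polyhedron: that $D(\widetilde M)$ is the convex hull of a discrete set of points, and that $\rho(\pi_1 M)\cong\Z^2$ is a discrete subgroup of $\mathrm{Iso}^+(\H^3)$ acting freely and cocompactly on a horosphere and preserving $D(\widetilde M)$.

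The plan is as follows. First I would record what is already in hand: by the cited convexity and developing-map results, $D$ is an isometric embedding of $\widetilde M$ onto a convex subset $P:=D(\widetilde M)\subset\H^3$, and $G:=\rho(\pi_1 M)$ acts on $P$ by isometries with $P/G\cong M$; since $\pi_1 M\cong\Z^2$ acts freely on $\widetilde M$, the representation $\rho$ is faithful and $G$ acts freely on $P$, hence $G\cong\Z^2$. The properness/discreteness of the action of $G$ on $P$ (hence on $\H^3$, using that $P$ has nonempty interior) follows from properness of deck transformations. Next I would analyze the structure of $P$ at infinity. Since $M$ is a hyperbolic cusp, it has a neighborhood of the end isometric to a quotient of a horoball; lifting, $P$ contains a horoball $B_1$, and I must show $\partial_\infty P$ consists of a single point $c$. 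The abelian group $G\cong\Z^2$ acting freely and properly by hyperbolic isometries with an invariant convex set containing a horoball must consist of parabolics sharing a fixed point $c\in\partial\overline{\H^3}$ — this is the standard classification of $\Z^2$ subgroups of $\mathrm{Iso}^+(\H^3)$ (it cannot be purely hyperbolic with a common axis since then the quotient would not have finite volume, and elliptic elements are excluded by freeness). The invariance of $P$ forces $c\in\overline P$; combined with convexity of $P$ and the cocompact parabolic action of $G$ on each horosphere centered at $c$, one gets that $P$ is contained in some horoball $B_2$ centered at $c$ and that $\partial_\infty P=\{c\}$.

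Then I would identify the combinatorial structure of $P$. The boundary $\partial M$ is, by definition, locally modelled on convex polyhedral cones, so $\partial P$ is locally polyhedral; its set of vertices $V$ (the lifts of the cone points of $\partial M$) is $G$-invariant and, since there are finitely many cone points downstairs and $G$ acts properly, $V$ is discrete and a finite union of $G$-orbits. It remains to check $P=\mathrm{conv}(V)$. The inclusion $\mathrm{conv}(V)\subseteq P$ is clear from convexity of $P$ and $V\subset P$. For the reverse inclusion I would use local convexity of $\partial M$: around any boundary point $P$ coincides locally with an intersection of finitely many half-spaces whose boundary planes meet at the vertex, so every boundary point of $P$ lies on a supporting plane spanned by nearby vertices; a global argument (for instance, using that any point of $P$ on a geodesic ray toward $c$ is separated from $c$ by such supporting planes, together with the fact that $V$ projects within a bounded horospherical region) then shows $P$ cannot properly contain the convex hull of $V$.

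The main obstacle I expect is the step establishing that $P$ is exactly the convex hull of its vertex set rather than a larger convex body with the same boundary vertices — equivalently, ruling out that $\partial P$ has "extra" flat or curved parts at infinity not cut out by vertices. Handling this cleanly requires combining the local polyhedral description near $\partial M$ with the behaviour at the ideal point $c$: one must argue that the finitely many face-planes of $\partial M$, lifted to $\H^3$, have no accumulation that would create ideal faces, and that $P$ is squeezed between two horoballs centered at $c$ so that its boundary genuinely "closes up" at $c$ as the convex hull of the discrete set $V$. Once this is done, all defining properties are verified and the lemma follows.
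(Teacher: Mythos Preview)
Your argument is correct in outline, but the route you take for the ``parabolic'' part differs from the paper's. You argue algebraically: $G\cong\Z^2$ is discrete and free in $\mathrm{Iso}^+(\H^3)$, hence by the classification of elementary subgroups it must be parabolic with a unique fixed point $c\in\partial\overline{\H^3}$, and then you deduce the invariant horospheres. The paper instead argues geometrically: the thin part of $M$ contains a totally umbilic Euclidean torus $C$, and the developing map sends $\widetilde C$ to a horosphere on which $\rho(\pi_1 M)$ visibly acts freely and cocompactly. The paper's approach is shorter and avoids invoking the classification of discrete abelian subgroups; your approach has the virtue of making explicit why no loxodromic behaviour can occur, though your stated reason (``the quotient would not have finite volume'') is not the sharpest one---the cleanest is that two commuting loxodromics share an axis and hence cannot generate a free $\Z^2$.

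On the point you flag as the main obstacle---that $P$ equals the convex hull of its vertex set rather than something larger---you are being more careful than the paper, which simply asserts that $D(\widetilde M)$ is a convex polyhedron with discrete vertex set and moves on. Your sketch (supporting planes through vertices, $P$ sandwiched between two concentric horoballs so no ideal faces appear) is the right idea and would suffice once written out; the key input is that the faces of $\partial M$ are finite in number and compact, so their lifts are locally finite totally geodesic pieces bounding $P$, and every point of $\partial P$ lies on such a face.
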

\begin{proof}
Clearly, $D(\widetilde{M})$ is a convex polyhedron homeomorphic to the half-space. Its vertices form a discrete set, because they correspond to vertices of $M$, whose number is finite.

The thin part of $M$ contains a totally umbilic torus $C$ with Euclidean metric. It follows that the developing map maps the universal cover of $C$ to  a horosphere. The group $\rho(\pi_1M)$ acts on $D(\widetilde{C})$ freely with a compact orbit space $C$. The lemma follows.
\end{proof}
\begin{corollary}
Every face of a convex polyhedral cusp is a convex hyperbolic polygon.
\end{corollary}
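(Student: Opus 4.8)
The statement to prove is that each face of a convex polyhedral cusp $M$ is a convex hyperbolic polygon. By Lemma~\ref{lem:CuspToPol}, the universal cover of $M$ develops onto a convex parabolic polyhedron $(P, G)$ with $P = D(\widetilde M)$; the faces of $M$ are the images under the quotient map of the faces of $\partial P$, so it suffices to argue at the level of $P$. The plan is to show that each face $F$ of $\partial P$ lies in a totally geodesic plane $\Pi \subset \H^3$, that $F$ is a convex subset of that plane, and that $F$ has finitely many edges.

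First I would recall why $F$ is planar: by definition of convex polyhedral boundary (Definition~\ref{def:Cusp}), a face is a maximal connected subset of $\partial M$ — equivalently of $\partial P$ — that bounds $M$ (resp.\ $P$) geodesically, so $F$ is contained in a support plane $\Pi$ of the convex set $P$. Since $P$ is convex and $\Pi$ is a support plane, $F \subseteq P \cap \Pi$ is the intersection of the convex set $P$ with a plane, hence convex; moreover $F$ is open in $\Pi$ and, being a face, it is a connected component of the set of points of $\partial P$ lying in $\Pi$. The remaining point is finiteness of the combinatorics. Here I would use that the vertex set of $P$ is $G$-invariant and, since $G \cong \Z^2$ acts cocompactly on horospheres centered at the center $c$ of $P$, consists of finitely many $G$-orbits of points staying a bounded hyperbolic distance from any fixed horosphere (this is exactly the containment $B_1 \subset P \subset B_2$ from the proof of Lemma~\ref{lem:PolToCusp}). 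Hence inside any compact region the vertex set is finite. The closure $\overline F$ in $\H^3$ is compact: indeed $\overline F$ is a closed subset of $\partial P \cap B_2$, and $\partial P \cap B_2$ projects to a compact region under the action of $G$, so $\overline F$ meets only finitely many $G$-translates; combined with local finiteness of the polyhedral structure this gives that $\overline F$ contains finitely many vertices and finitely many edges of $P$. Therefore $F$ is a convex polygon in $\Pi$ with finitely many sides, i.e.\ a convex hyperbolic polygon.

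The step I expect to be the main (though modest) obstacle is the last one: ruling out that a single face accumulates infinitely many edges or vertices, i.e.\ establishing that $\overline F$ is compact and the polyhedral structure is locally finite along $F$. In the parabolic setting this is not completely automatic because $P$ is noncompact and has a cusp at $c$; the key is that no vertex of $P$ can lie on the horosphere through $c$ "at infinity" — all vertices are at finite height between the two horospheres $B_1, B_2$ — so a face, being the intersection of the convex polyhedron with a support plane $\Pi$ and having its boundary made of edges between such vertices, cannot run off to $c$. Once compactness of $\overline F$ is in hand, finiteness of its edge set follows from discreteness of the vertex set together with convexity, and the conclusion is immediate.
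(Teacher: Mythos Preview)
The paper states this corollary without proof, treating it as an immediate consequence of Lemmas~\ref{lem:PolToCusp} and~\ref{lem:CuspToPol}: once one knows that the universal cover is a convex parabolic polyhedron --- in particular the convex hull of a discrete set, sandwiched between two concentric horoballs $B_1 \subset P \subset B_2$ --- the claim is meant to follow from standard facts about convex hulls. Your proposal fleshes out this implicit argument, and the overall structure (lift to $P$, each face lies in a support plane $\Pi$, the face is convex as $P\cap\Pi$, then establish finiteness of vertices) is the right one.

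There is, however, a genuine gap in your compactness step. You write that $\overline F$ is a closed subset of $\partial P \cap B_2$, that $\partial P \cap B_2$ projects to a compact region under the action of $G$, and conclude that $\overline F$ meets only finitely many $G$-translates. This inference is invalid: compactness of the quotient $\partial P/G$ says nothing about whether a given subset of $\partial P$ meets only finitely many fundamental domains --- a straight line in $\R^2$ projects to a compact subset of $\R^2/\Z^2$ yet meets infinitely many unit squares. Your final paragraph gestures toward the right idea (the face ``cannot run off to $c$''), but the justification via vertex heights is also insufficient: an unbounded convex planar region can perfectly well have all its extreme points at bounded height. The clean fix is to observe directly that the support plane $\Pi$ cannot have the center $c$ in its closure in $\partial\overline{\H^3}$: since $\Pi$ is a support plane of $P$ and $P\supset B_1$, one of the half-spaces bounded by $\Pi$ would have to contain the horoball $B_1$ centered at $c$, which is impossible for a plane asymptotic to $c$. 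Once $c$ is not in the ideal closure of $\Pi$, the intersection of $\Pi$ with the region $\overline{B_2}\setminus\inn\, B_1$ (which contains $\partial P$) is compact, and the rest of your argument --- discreteness of the vertex set, convexity of $P\cap\Pi$ --- goes through.
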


Lemmas \ref{lem:PolToCusp} and \ref{lem:CuspToPol} imply that $(P,G) \mapsto P/G$ is a one-to-one correspondence between the equivariant isometry classes of convex parabolic polyhedra and isometry classes of convex polyhedral cusps. Thus Theorem \ref{Base theorem} is equivalent to Theorem~\ref{realisation theorem}.

\subsection{Cusps with particles.} \label{subsec:CuspsParticles}
\begin{definition}
A \emph{semi-ideal pyramid} in $\H^3$ is the convex hull of a convex polygon $A$ and a point $a \in \partial\overline{\H^3}$ such that $a$ is not coplanar to $A$. The point $a$ is called the \emph{apex} of the pyramid, the polygon $A$ its \emph{base}.
\end{definition}
A convex polyhedral cusp can be decomposed into semi-ideal pyramids with a common apex. Indeed, let $M$ be a cusp and let $(P,G)$ be the corresponding parabolic polyhedron. If $c \in \partial\overline{\H^3}$ is the center of $P$, then $P$ is composed from semi-ideal pyramids with the apex $c$ over the faces of $P$. Clearly, this decomposition of $P$ descends to a decomposition of $M \cong P/G$. In the example on Figure \ref{fig:ellipsoid}, the decomposition of $M$ consists of a single isosceles quadrangular pyramid whose faces are identified according to the standard gluing of a torus from a parallelogram.

Let us see when a gluing of pyramids defines a convex polyhedral cusp.
\begin{definition}
A \emph{cuspidal complex} is a collection of semi-ideal pyramids glued isometrically along some pairs of faces so that combinatorially the gluing is represented by the cone with an ideal apex over a polyhedral decomposition of the torus.
\end{definition}
If the pyramids of a cuspidal complex fit well around their lateral edges, then the result of the gluing is a hyperbolic manifold with polyhedral boundary. This manifold can be non-complete as the following example shows. In the Poincar\'e half-space model, take a semi-ideal pyramid with vertices $(1,0,1), (0,1,1), (2,0,2), (0,2,2)$ and the point at infinity as the apex. Clearly, the semi-ideal triangles in each pair of opposite sides of the pyramid are isometric. When we identify them, we get a non-complete manifold homeomorphic to $\Tor\times[0,+\infty)$.

For a semi-ideal pyramid $\Delta$, choose a horoball $B$ centered at the apex of the pyramid and disjoint with its base. The body $\Delta\setminus B$ is called a \emph{truncated semi-ideal pyramid} or a \emph{horoprism}.
\begin{definition}
A cuspidal complex is called \emph{compatible} if every pyramid of the complex can be truncated so that the gluing isometries restrict to the faces of the truncated pyramids.
\end{definition}

\begin{lemma}
The manifold defined by a cuspidal complex is complete if and only if the complex is compatible.
\end{lemma}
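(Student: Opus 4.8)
The plan is to prove both implications by relating completeness of the glued manifold to the existence of a consistent system of horoball truncations. For the ``if'' direction, suppose the complex is compatible. Choose truncating horoballs $B_\Delta$ for each pyramid $\Delta$ so that the gluing isometries match up the truncated faces. I would argue that the truncation data glue to a well-defined embedded torus $C$ near the ideal end: the horospherical cross-sections of the truncated pyramids are Euclidean polygons, and the compatibility condition says precisely that they agree along the gluings, so they assemble into a Euclidean torus. The region cut off by $C$ is then foliated by parallel Euclidean tori (scaled copies obtained from concentric horoballs), and one checks that this foliated collar is isometric to a standard cusp neighborhood $\Tor \times [t_0, +\infty)$ with the warped-product metric $dt^2 + e^{-2t} g_0$. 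Since a neighborhood of the end is a standard cusp, and the complement is compact (a finite union of horoprisms), the manifold is complete of finite volume.

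For the ``only if'' direction, I would argue contrapositively, or directly, using the developing map. If the manifold $N$ obtained from the cuspidal complex is complete, then $N \approx \Tor\times[0,+\infty)$ is a complete hyperbolic manifold with a single end which must be a cusp; hence the holonomy $\rho:\pi_1 N \cong \Z^2 \to \mathrm{Iso}^+(\H^3)$ is a discrete parabolic group fixing a point $c\in\partial\overline{\H^3}$, and the thin part contains a totally umbilic Euclidean torus $C$ developing to a horosphere centered at $c$ (this is exactly the argument of Lemma~\ref{lem:CuspToPol}). Now develop the cuspidal complex into $\H^3$: the common ideal apex of all pyramids must develop to a point fixed by the holonomy, hence to $c$. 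Intersecting the developed picture with a horoball $B$ centered at $c$ small enough to meet all the developed pyramids in their ``tips'' and no bases, and pushing this horoball down to the complex, we obtain a truncation of each pyramid; the fact that $B$ is a single $\rho$-invariant horoball (equivalently, that $C$ is an embedded horospherical torus) forces the gluing isometries to respect these truncations. Thus the complex is compatible.

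The main obstacle I expect is the ``only if'' direction, specifically making rigorous the claim that the single developed apex, together with a horoball centered there, descends to a \emph{simultaneous} truncation of all pyramids with matching faces. Two points need care: first, that completeness genuinely forces the end to be a parabolic cusp with parabolic holonomy (as opposed to, say, the incomplete example in the excerpt where the holonomy is hyperbolic) --- this is where completeness is used essentially, via the Margulis/thick-thin description of complete finite-volume ends; and second, that the developing map restricted to the star of the ideal apex is injective enough that a horoball neighborhood of $c$ pulls back to honest truncating horoballs in each pyramid simultaneously, which requires that the cross-sectional Euclidean polygons tile a neighborhood of a point of the developed horosphere without overlap. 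Both follow from the structure theory of cusps, but should be stated explicitly. The ``if'' direction is comparatively routine: it is just the observation that a compatible truncation produces a genuine standard cusp collar, so I would keep that part brief.
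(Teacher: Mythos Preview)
Your proposal is correct and follows essentially the same approach as the paper: for the ``if'' direction both you and the paper show that the compatible truncation produces a genuine horoball/cusp neighborhood near the end (you phrase this intrinsically via the warped-product collar, the paper via the developing map sending $\bigcup_i(\Delta_i\cap B_i)$ to a horoball), and for the ``only if'' direction both use the totally umbilic Euclidean torus in the thin part of the complete manifold to cut out a compatible truncation. Your write-up is considerably more detailed than the paper's very terse argument and correctly flags the points (parabolic holonomy, the horoball descending to a simultaneous truncation) that the paper leaves implicit.
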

\begin{proof}
If the manifold is complete, then its thin part contains a cusp with totally umbilic Euclidean boundary. Cutting this cusp off defines a compatible truncation of the pyramids.

Conversely, assume that the complex is compatible. For every pyramid $\Delta_i$ consider the corresponding horoball sectors $\Delta_i \cap B_i$. It is easy to see that the developing map maps the union $\cup_i (\Delta_i \cap B_i)$ to a horoball in $\H^3$. Thus the manifold $\cup_i (\Delta_i \cap B_i)$ is complete and so is the whole manifold defined by the complex.
\end{proof}

In general, a compatible cuspidal complex defines a cone-manifold whose singular locus is contained in the union of half-lines that come from the lateral edges of the semi-ideal pyramids. We call these half-lines \emph{particles}.
\begin{definition}
A \emph{polyhedral cusp with particles} is a hyperbolic cone-manifold defined by a compatible cuspidal complex. A polyhedral cusp with particles is called \emph{convex} iff the total dihedral angle at every boundary edge is $\le \pi$. For a boundary singularity $i$, denote by $\omega_i$ the total dihedral angle around the $i$th particle. The \emph{curvature} of the $i$th particle is defined as
$$
\kappa_i = 2\pi - \omega_i.
$$

A \emph{truncated polyhedral cusp with particles} is defined in the same way as a polyhedral cusp with particles, using horoprisms instead of semi-ideal pyramids.
\end{definition}

We are interested in the cusps whose boundary is isometric to $(\Tor, g)$, where $g$ 
is a hyperbolic metric with conical singularities of positive singular curvatures on the torus $\mathbb{T}$.

\begin{definition}
We denote by $\M(\Tor,g)$ the space of convex polyhedral cusps with particles $M$ with $\partial M = (\Tor,g)$. By $\M_{tr}(\Tor,g)$ we denote the space of truncated convex polyhedral cusps with particles with the boundary $(\Tor,g)$.
\end{definition}
Formally speaking, an element of $\M(\Tor, g)$ is a pair $(M,f)$, where $f: \partial M \to (\Tor, g)$ is an isometry. It will be convenient to us to identify $\partial M$ with the given metric torus $(\Tor, g)$, so that we can omit mentioning $f$.

Theorem \ref{Base theorem} is equivalent to say that in $\M(\Tor, g)$ there is a unique cusp with vanishing curvatures of particles. Note that we don't fix an isometry between $\partial M$ and $(\Tor,g)$ in Theorem \ref{Base theorem}. In this case, it does not really matter because the uniqueness is stated.

As in the case of a convex polyhedral cusp, the boundary of a convex polyhedral cusp with particles consists of vertices, edges and faces. Unlike the case without particles, faces of a cusp with particles can be non-simply connected, and there can be isolated vertices, as the following example shows.

\medskip

\noindent\emph{Example}. In the upper half-space, take the point $a=(0,0,1)$ and points $b, c$ on the unit sphere centered at $0$ so that $b$ and $c$ lie at an equal distance from $a$ and the angle at the vertex $a$ in the spherical triangle $abc$ is $<\frac{\pi}{2}$. In the Poincar\'e half-space model, the semi-ideal pyramid with the base $abc$ and the apex at the point at infinity has dihedral angles $\frac{\pi}{2}$ at the edges $ab$ and $ac$ and an angle $<\frac{\pi}{2}$ at $bc$. Take four copies of this pyramid and glue them cyclically around the edge $a\infty$. The result is a semi-ideal quadrangular pyramid with a particle. By identifying the pairs of its opposite sides, we obtain a convex polyhedral cusp with particles. Its boundary contains two vertices, two loop edges and a single face that looks as a punctured square.

\medskip

To deal with the space $\M(\Tor, g)$, we need to introduce coordinates on it. A compatible cuspidal complex over $(\Tor,g)$ is determined by a polyhedral subdivision of the metric torus $(\Tor, g)$ and by the lengths of the lateral edges of the pyramids over the faces of the subdivision. To measure the lengths of the (infinite) lateral edges in a compatible complex, one chooses a truncation and measures the lengths of the truncated edges. A different choice of truncation results in adding a constant to all of the lengths. As for the polyhedral subdivision of the torus, it is convenient to refine it to a triangulation. This motivates the following definition.
\begin{definition} \label{dfn:Th}
Let $M_{tr} \in \M_{tr}(\Tor,g)$ be a truncated convex polyhedral cusp with particles. Let $T$ be a triangulation of $(\Tor,g)$ that refines the natural decomposition of the boundary $\partial M_{tr}$, and let $h_i$ be the length of the truncated particle with the endpoint $i \in \Sigma$, where $\Sigma$ is the set of singularities of $g$. We associate to $M_{tr}$ the pair $(T,h)$, where $h$ stands for $(h_i)_{i\in \Sigma}$.

Similarly, to every $M \in \M(\Tor,g)$ we associate a pair $(T,[h])$, where $(T,h)$ represents a truncation of $M$, and $[h]$ is the equivalence class under the relation $h \sim h' \Leftrightarrow h'_i = h_i + c$ for all $i$ and some constant $c$.

The equivalent class $[h]$ is called the \emph{particle lengths} of $M$. The cusp $M$ is \emph{isosceles} if it is made of isosceles semi-ideal pyramids, \emph{i.e.} if $[h]=[0,\ldots,0]$.
\end{definition}
By a triangulation we mean a decomposition of $(\Tor,g)$ into open hyperbolic triangles by geodesic arcs (edges of the triangulation) with endpoints in $\Sigma$. We don't impose any restrictions on the combinatorics, so that there may be loops and multiple edges, and two triangles may have two edges in common, and two edges of a triangle may be identified. An edge with endpoints $i$ and $j$ is denoted by $ij$, a triangle with vertices $i, j$ and $k$ is denoted by $ijk$. Because of what we just said, different edges or triangles may obtain the same notation, and some letters in the notation may repeat. But this will not lead to confusion.

\begin{lemma}\label{Lem:sumcurvaturearezero}
The curvatures of a convex polyhedral cusp with particles satisfy
$$ \sum_{i\in\Sigma} \kappa_i=0.$$
\end{lemma}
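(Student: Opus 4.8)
### Proof proposal

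The plan is to compute a Gauss--Bonnet type identity for a horospherical cross-section of the cusp and compare it with the information coming from the truncated pyramids.

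First I would fix a truncation of $M$, i.e.\ choose a horoball $B$ centered at the ideal apex so that the induced metric on the boundary torus $C = \partial B$ of the thin part (more precisely, on the torus obtained by cutting all the horoprisms at level $B$) is a \emph{Euclidean} cone metric on $\Tor$. Indeed, each horoprism contributes a Euclidean polygon to this cross-section, glued along edges, and the total angle of the cross-section around the point where the $i$th particle meets it is exactly $\omega_i$, the total dihedral angle around the $i$th particle. Hence $C$ is a Euclidean cone torus whose cone points are exactly the points $i \in \Sigma$, with cone angles $\omega_i$.

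The key step is then to apply the Gauss--Bonnet theorem to the closed Euclidean cone surface $C$. Since $C$ is a torus, $\chi(C) = 0$, and since the metric is flat away from the cone points, the curvature integral vanishes, so Gauss--Bonnet gives $\sum_{i \in \Sigma}(2\pi - \omega_i) = 2\pi \chi(C) = 0$. But $2\pi - \omega_i = \kappa_i$ by the definition of the particle curvature, so $\sum_{i\in\Sigma}\kappa_i = 0$, as claimed. (Alternatively, one argues combinatorially: with $V$, $E$, $F$ the numbers of vertices, edges, faces of the cross-sectional cell decomposition of $\Tor$ coming from the horoprisms, the sum of all polygon angles is $\sum_i \omega_i$, and the same sum computed face-by-face equals $\sum_{\text{faces}} (n_f - 2)\pi = 2\pi E - 2\pi F$ by summing interior angle sums; then $V - E + F = 0$ gives $\sum_i \omega_i = 2\pi V$, hence $\sum_i \kappa_i = 2\pi V - \sum_i \omega_i = 0$.)

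The only point requiring a little care --- and the main obstacle --- is verifying that the horospherical cross-section of each horoprism is genuinely an intrinsically flat polygon and that these polygons glue up, along the cross-section, into a closed surface homeomorphic to $\Tor$ with the claimed cone angles. This follows from the description of a cusp with particles as a compatible cuspidal complex: a horoprism is a truncated semi-ideal pyramid, its intersection with a horosphere centered at the apex is a Euclidean polygon (the horosphere carries a flat metric, and the lateral faces cut out geodesic segments), the compatibility condition guarantees the gluing isometries match on these cross-sections, and the combinatorial model (the cone over a polyhedral torus) ensures the resulting cross-section is a torus. Once this identification is in place, Gauss--Bonnet (or the Euler-characteristic count) finishes the proof immediately.
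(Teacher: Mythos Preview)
Your proof is correct and follows exactly the same approach as the paper: truncate the cusp, observe that the truncation surface is a flat torus with cone singularities of curvature $\kappa_i$, and apply Gauss--Bonnet. The paper's proof is simply a three-line version of what you wrote; your additional verification that the horospherical cross-sections are intrinsically Euclidean and glue correctly, and your alternative Euler-characteristic count, are elaborations of points the paper leaves implicit.
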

\begin{proof}
Truncate the cusp. The induced metric on the surface of truncation is a flat metric with conical singularities on the torus. Clearly, the curvatures of the singularities are exactly the $\kappa_i$. The lemma follows from the Gauss--Bonnet formula.
\end{proof}

\subsection{Some hyperbolic trigonometry}\label{sub:background}
\begin{lemma}[Cosine law for semi-ideal triangles] \label{horosphere lemma}
Let $B$ be a horodisk in the hyperbolic plane and let $i, j$ be two points not in $B$. Let $h_i, h_j, \lambda$ be the distances $\dist(i,B)$, $\dist(j,B)$, $\dist(i,j)$, respectively, and let $\rho_i$ be the angle between the geodesic segment $ij$ and the perpendicular from $i$ to $B$. Then
\begin{equation}\label{eqn:cos}
\cos \rho_i=\frac{\cosh\lambda-e^{h_j-h_i}}{\sinh\lambda}.
\end{equation}
\end{lemma}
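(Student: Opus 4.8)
The plan is to set up the horodisk $B$ in the upper half-plane model so that its bounding horocycle is a horizontal line, say the line at height $1$, with the complement of $B$ being the region below it (i.e.\ $B = \{y \geq 1\}$, so points not in $B$ have $y < 1$). With this normalization the signed distance from a point $p = (x,y)$ with $y \le 1$ to the horocycle $\{y=1\}$ is $-\log y$, so that $h_i = -\log y_i$ and $h_j = -\log y_j$, i.e.\ $y_i = e^{-h_i}$, $y_j = e^{-h_j}$. The perpendicular from $i$ to $B$ is the vertical geodesic through $i$, so $\rho_i$ is the angle that the geodesic segment $ij$ makes with the upward vertical direction at $i$.

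First I would recall (or quickly derive) the two standard formulas for a geodesic segment in the upper half-plane between $p=(x_p,y_p)$ and $q=(x_q,y_q)$ lying on a common semicircle of Euclidean radius $R$ centered at $(x_0,0)$: namely $\cosh\lambda = 1 + \dfrac{(x_p-x_q)^2+(y_p-y_q)^2}{2 y_p y_q}$, and, writing the endpoints in the angular parametrization $p = (x_0 + R\cos\alpha,\ R\sin\alpha)$, the hyperbolic geodesic through $p$ and $q$ meets the vertical line through $p$ at angle $\alpha$ (measured from the upward vertical), since the tangent to the semicircle at $p$ makes angle $\alpha$ with the vertical. Hence $\cos\rho_i = \cos\alpha_i = (x_p - x_0)/R$ with the appropriate sign, and likewise for the $y$-coordinate one has $\sin\alpha_i = y_i/R$.

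Then the computation is essentially bookkeeping: from $y_i = R\sin\alpha_i$, $y_j = R\sin\alpha_j$ and the chord-length expression for $\cosh\lambda$ one eliminates $R$ and $x_0$. Concretely, expanding $\cosh\lambda - 1 = \dfrac{(x_i-x_j)^2 + (y_i-y_j)^2}{2y_iy_j}$ and using $x_i - x_j = R(\cos\alpha_i - \cos\alpha_j)$, $y_i - y_j = R(\sin\alpha_i - \sin\alpha_j)$ gives, after the identity $(\cos\alpha_i-\cos\alpha_j)^2 + (\sin\alpha_i-\sin\alpha_j)^2 = 2 - 2\cos(\alpha_i-\alpha_j)$ and $R^2 = y_iy_j/(\sin\alpha_i\sin\alpha_j)$,
\begin{equation*}
\cosh\lambda - 1 = \frac{1 - \cos\alpha_i\cos\alpha_j - \sin\alpha_i\sin\alpha_j}{\sin\alpha_i\sin\alpha_j},
\end{equation*}
so that $\cosh\lambda\,\sin\alpha_i\sin\alpha_j = 1 - \cos\alpha_i\cos\alpha_j$. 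One also gets $\sinh^2\lambda = \cosh^2\lambda - 1$, and a parallel manipulation of $\sinh\lambda$ in terms of the cross terms yields $\sinh\lambda\,\sin\alpha_i\sin\alpha_j = \sin(\alpha_j-\alpha_i)$ (choosing orientations so this is positive); meanwhile $y_i/y_j = \sin\alpha_i/\sin\alpha_j = e^{h_j - h_i}$. Solving the first relation for $\cos\alpha_i$ gives $\cos\alpha_i = \dfrac{1 - \cosh\lambda\,\sin\alpha_i\sin\alpha_j}{\cos\alpha_j}$; substituting and simplifying with $\sin\alpha_i = e^{h_j-h_i}\sin\alpha_j$ collapses everything to $\cos\rho_i = \cos\alpha_i = \dfrac{\cosh\lambda - e^{h_j - h_i}}{\sinh\lambda}$, which is \eqref{eqn:cos}.

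The main obstacle is not any deep idea but rather keeping the signs and the choice of angular branch straight: $\alpha_i, \alpha_j$ should be taken in $(0,\pi)$ with, say, $x_i > x_j$ on a semicircle opening upward, so that $\sinh\lambda > 0$ is matched by the correctly oriented difference of angles, and the sign of $\cos\rho_i$ (acute versus obtuse angle between $ij$ and the perpendicular) comes out consistent with the right-hand side of \eqref{eqn:cos}. An alternative, cleaner route that avoids the trigonometric parametrization entirely is to place $i$ on the vertical line $x=0$, i.e.\ $i = (0, e^{-h_i})$, so the perpendicular from $i$ is that vertical line itself; then the geodesic through $i$ and $j$ is a semicircle whose center $(x_0,0)$ satisfies $x_0^2 + e^{-2h_i} = R^2$, and $\cos\rho_i = x_0/R$ up to sign, $y_j = e^{-h_j}$, and $\cosh\lambda = 1 + \frac{x_j^2 + (y_j - e^{-h_i})^2}{2 e^{-h_i} y_j}$ with $x_j^2 = R^2 - (x_j - x_0)^2$ — a purely algebraic elimination of $R, x_0, x_j$ that lands on \eqref{eqn:cos} with a bit less casework. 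I would present whichever of the two is shorter to write out.
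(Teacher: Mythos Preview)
Your computation in the upper half-plane model is correct, and you rightly flag the sign bookkeeping as the only delicate point. However, the paper's own proof takes a much shorter and more conceptual route: it simply observes that the formula is the limit of the ordinary hyperbolic law of cosines for a triangle with vertices $i$, $j$, and a third vertex $c$ tending to the center of the horodisk. Concretely, if $c$ is at hyperbolic distance $r$ from the horocycle on the inside, the side lengths $ic$ and $jc$ are $h_i + r$ and $h_j + r$, and the cosine law gives $\cos\rho_i = \dfrac{\cosh(h_i+r)\cosh\lambda - \cosh(h_j+r)}{\sinh(h_i+r)\sinh\lambda}$; letting $r \to \infty$ and using $\cosh(h+r)/\sinh(h_i+r) \to 1$ and $\cosh(h_j+r)/\sinh(h_i+r) \to e^{h_j-h_i}$ yields \eqref{eqn:cos} in one line. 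Your approach has the virtue of being entirely self-contained (no appeal to the finite cosine law) and of making the upper half-plane geometry explicit, which is sometimes useful for further calculations; the paper's approach buys brevity and makes transparent that this really is the cosine law in the degenerate limit, with no new ingredient.
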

\begin{proof}
Go to the limit in the cosine law for the triangle with vertices $i,j$ and third vertex approaching the center of the horodisk.
\end{proof}

\begin{lemma} \label{lemmadistancelike}
Let $B$ be a horodisk in the hyperbolic plane, and $L$ be a line disjoint with $B$. Then for every $x \in L$ we have
$$
\dist(x,B) = \log \cosh (\dist(x,a)) + \dist(a,B),
$$
where $a$ is the point on $L$ nearest to $B$.
\end{lemma}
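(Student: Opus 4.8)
The plan is to derive this from Lemma~\ref{horosphere lemma} (the cosine law for semi-ideal triangles) by choosing the second vertex of the triangle to be the foot point $a$ itself. If $x=a$ the identity is trivial since $\log\cosh 0 = 0$, so assume $x\ne a$.

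First I would set up the right semi-ideal triangle. Let $\xi\in\partial\overline{\H^2}$ be the center of the horodisk $B$, so that the geodesics orthogonal to $\partial B$ are exactly the ones ending at $\xi$; in particular the perpendicular from any point $p\notin B$ to $B$ lies on the geodesic ray from $p$ to $\xi$. Consider the semi-ideal triangle with vertices $a$, $x$, and $\xi$: its finite side is the segment $ax\subset L$, and its side $a\xi$ carries the perpendicular from $a$ to $B$. Applying Lemma~\ref{horosphere lemma} to this triangle, with $a$ and $x$ playing the roles of the lemma's points $i$ and $j$ and with the same horodisk $B$, gives
\[
\cos\rho_a=\frac{\cosh\lambda-e^{\dist(x,B)-\dist(a,B)}}{\sinh\lambda},
\]
where $\lambda=\dist(x,a)$ and $\rho_a$ is the angle at $a$ between $L$ and the perpendicular from $a$ to $B$.

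The key geometric input is that $\rho_a=\frac{\pi}{2}$. This holds because $a$ is, by definition, the point of $L$ minimizing the smooth convex function $p\mapsto\dist(p,B)$ (convex as a Busemann-type function for $\xi$, and attaining its minimum on $L$ since $\dist(\cdot,B)\to+\infty$ at both ideal endpoints of $L$, neither of which can be $\xi$ as $L$ is disjoint from $B$). At this interior minimum the derivative of $\dist(\cdot,B)$ along $L$ vanishes; since the gradient of $\dist(\cdot,B)$ at $a$ points along the perpendicular ray from $a$ to $B$, that ray is orthogonal to $L$. Equivalently, by the first variation of arc length, if the ray from $a$ realizing $\dist(a,B)$ made an angle different from $\frac{\pi}{2}$ with $L$, one could slide $a$ slightly along $L$ and strictly decrease the distance to $B$, a contradiction.

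Substituting $\cos\rho_a=0$ into the displayed identity yields $\cosh\lambda=e^{\dist(x,B)-\dist(a,B)}$, and taking logarithms gives exactly $\dist(x,B)=\log\cosh(\dist(x,a))+\dist(a,B)$. The only step needing genuine care is the perpendicularity claim $\rho_a=\frac{\pi}{2}$; everything else is bookkeeping within Lemma~\ref{horosphere lemma}. As a sanity check and an alternative self-contained route one can bypass Lemma~\ref{horosphere lemma} altogether by working in the upper half-plane model with $\xi=\infty$, so that $B=\{y\ge y_0\}$, $\dist(p,B)=\log y_0-\log(\mathrm{Im}\,p)$, and $L$ is a Euclidean semicircle centered on $\R$ with highest point $a$; parametrizing $L$ by Euclidean angle then reduces the claim to the elementary identity $\cosh(\log\tan(t/2))=1/\sin t$.
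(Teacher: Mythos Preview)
Your proof is correct and follows essentially the same route as the paper: form the semi-ideal triangle $a\,x\,\xi$, apply the cosine law (Lemma~\ref{horosphere lemma}) with $a$ in the role of $i$ and $x$ in the role of $j$, use $\cos\rho_a=0$, and read off the identity. The only difference is that you spell out why $\rho_a=\tfrac{\pi}{2}$ via a first-variation argument (and add an upper half-plane sanity check), whereas the paper simply asserts $\cos\rho_i=0$ without comment.
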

\begin{proof}
Consider the semi-ideal triangle with vertices $a, x$ and the center $c(B)$ of $B$. Apply equation (\ref{eqn:cos}), taking $a$ for $i$, $x$ for $j$. We have $\cos \rho_i = 0$ and $\lambda = \dist(x,a)$. Hence
$$
\cosh (\dist(x,a)) = e^{h_j-h_i} = \exp(\dist(x,B) - \dist(a,B)),
$$
and the claim follows.
\end{proof}

\begin{lemma} \label{lem:SinhForm}
Let $i,j,k$ be three collinear points in $\H^2$ such that $j$ lies between $i$ and $k$. For a horodisk $B$ that contains none of the points $i,j,k$, denote by $h_i, h_j, h_k$ the distances $\dist(i,B), \dist(j,B), \dist(k,B),$ respectively. Then
\begin{equation} \label{eqn:SinhForm}
e^{h_j} = \frac{\sinh\mu}{\sinh(\lambda+\mu)} e^{h_i} + \frac{\sinh\lambda}{\sinh(\lambda+\mu)} e^{h_k},
\end{equation}
where $\lambda = \dist(i,j), \mu = \dist(j,k)$.
\end{lemma}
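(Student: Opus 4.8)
The plan is to reduce the statement to the already-established identity of Lemma \ref{lemmadistancelike} together with a computation in the hyperbolic plane. First I would set up coordinates: work in the upper half-plane model with $B$ the horodisk based at $\infty$, say $B = \{y \geq 1\}$. Then for a point $x$ at Euclidean height $y \leq 1$ one has $\dist(x,B) = \log(1/y) = -\log y$, so $e^{-\dist(x,B)}$ is exactly the Euclidean height of $x$. Hence the claim $e^{h_j} = \frac{\sinh\mu}{\sinh(\lambda+\mu)}e^{h_i} + \frac{\sinh\lambda}{\sinh(\lambda+\mu)}e^{h_k}$ is equivalent, after inverting, to a relation among the reciprocals of the Euclidean heights of three collinear points $i,j,k$ with $j$ between $i$ and $k$; but inverting a convex combination is awkward, so instead I would keep things multiplicative and show directly that $e^{h_j}$, viewed as a function along the geodesic through $i,j,k$ parametrized by signed arclength $t$, has the form $e^{h(t)} = \alpha\cosh t + \beta\sinh t$ for constants $\alpha,\beta$ (this is precisely the content of Lemma \ref{lemmadistancelike}: $\dist(x,B) = \log\cosh\dist(x,a) + \dist(a,B)$ gives $e^{h(t)} = e^{h_a}\cosh(t - t_a) = e^{h_a}(\cosh t_a \cosh t - \sinh t_a \sinh t)$, which is indeed of the claimed form).

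Once $e^{h(t)} = \alpha \cosh t + \beta \sinh t$ is known, the lemma becomes pure linear algebra. Place $j$ at $t = 0$, so $i$ is at $t = -\lambda$ and $k$ is at $t = \mu$ (using that $j$ lies between $i$ and $k$, with $\lambda = \dist(i,j)$, $\mu = \dist(j,k)$). Then
\begin{equation*}
e^{h_j} = \alpha, \qquad e^{h_i} = \alpha\cosh\lambda - \beta\sinh\lambda, \qquad e^{h_k} = \alpha\cosh\mu + \beta\sinh\mu.
\end{equation*}
Eliminating $\beta$ from the last two equations — multiply the second by $\sinh\mu$, the third by $\sinh\lambda$, and add — yields
\begin{equation*}
\sinh\mu\, e^{h_i} + \sinh\lambda\, e^{h_k} = \alpha(\sinh\mu\cosh\lambda + \sinh\lambda\cosh\mu) = \alpha\sinh(\lambda+\mu),
\end{equation*}
by the addition formula for $\sinh$. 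Dividing by $\sinh(\lambda+\mu)$ and using $\alpha = e^{h_j}$ gives exactly \eqref{eqn:SinhForm}.

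I do not expect a serious obstacle here; the only point requiring a little care is justifying that $t \mapsto e^{h(t)}$ really is a $\cosh$/$\sinh$ combination along the \emph{whole} line, i.e. that Lemma \ref{lemmadistancelike} applies uniformly — this needs the hypothesis that $B$ contains none of $i,j,k$ and, implicitly, that the line is disjoint from $B$ (otherwise $\dist(\cdot,B)$ is not given by that formula on the portion inside $B$); since $i,j,k$ are collinear and all outside $B$, and $B$ is convex, the segment $ik$ misses $B$, and one can take $a$ to be the point of the line nearest $B$ and extend. An alternative, if one prefers to avoid even this, is to derive \eqref{eqn:SinhForm} directly from the semi-ideal cosine law \eqref{eqn:cos} applied to the two triangles $i j c(B)$ and $j k c(B)$ sharing the edge from $j$ to the center of $B$: the angle $\rho_j$ on one side and its supplement on the other side of the line give $\cos\rho_j$ and $-\cos\rho_j$, and adding the two instances of \eqref{eqn:cos} (with $\lambda$ replaced by $\mu$ in the second) eliminates $\cos\rho_j$ and produces the same linear relation after clearing denominators. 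Either route is a routine two- or three-line computation.
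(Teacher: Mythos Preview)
Your alternative route---applying the semi-ideal cosine law \eqref{eqn:cos} at the vertex $j$ in the two triangles $ijc$ and $jkc$ and eliminating $\cos\rho_j$---is exactly the paper's proof; the paper phrases it as ``equate the two expressions for $\cos\rho_j$'' rather than ``add'', but the computation is identical.

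Your primary route through Lemma~\ref{lemmadistancelike} is a legitimate and slightly more conceptual alternative (it makes transparent why $e^{h}$ lies in the two-dimensional space spanned by $\cosh t$ and $\sinh t$, so that any three values satisfy a linear relation with the displayed $\sinh$ coefficients). One small slip to fix: the sentence ``since $i,j,k$ are \dots\ all outside $B$, and $B$ is convex, the segment $ik$ misses $B$'' is false---it is the \emph{complement} of $B$ that would need to be convex for that implication, and it is not. Consequently Lemma~\ref{lemmadistancelike}, which assumes $L\cap B=\emptyset$, may not apply directly. This is easy to repair: either observe that the formula of Lemma~\ref{lemmadistancelike} extends verbatim with signed distance when $L$ meets $B$, or simply finish the upper-half-plane computation you started. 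With $B=\{y\ge 1\}$ one has $e^{h}=1/y$; a non-vertical geodesic is a semicircle of Euclidean radius $r$ with $y=r\cos\theta$ and arclength $s$ satisfying $\cosh s=\sec\theta$, whence $e^{h(s)}=\tfrac{1}{r}\cosh s$, which is of the required form $\alpha\cosh t+\beta\sinh t$ after translating the parameter. (The vertical-line case gives $e^{h}=Ce^{\pm s}$, the degenerate case $\alpha=\pm\beta$.) After that, your linear-algebra step is correct as written.
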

\begin{proof}
Let $c$ be the center of $B$, and let $\rho_j, \pi-\rho_j$ be the angles between the geodesic $ik$ and the perpendicular from $j$ to $\partial B$. Compute $\cos \rho_j$ by the equation (\ref{eqn:cos}) from the semi-ideal triangles $ijc$ and $jkc$ and equate the two expressions.
\end{proof}

\section{The space of convex polyhedral cusps with particles} \label{sec:SpaceOfCusps}
In Subsection \ref{subsec:HeightsToCusp} we show that a convex polyhedral cusp with particles is uniquely determined by the particle lengths $[h]$ introduced in Definition \ref{dfn:Th}. This identifies the space of cusps $\M(\Tor,g)$ with a subset of $\R^{n-1}$, where $n$ is the number of singularities of $g$. Subsection \ref{subsec:Lemmas} contains several lemmas that are later used in Subsection \ref{subsec:MTg} to prove Proposition \ref{prp:SpaceOfCusps}. The proposition says that $\M(\Tor,g)$ is a compact convex subset of $\R^{n-1}$.

Everywhere in this section we mean by a \emph{cusp} a cusp with particles with polyhedral boundary.

\subsection{Particle lengths define a cusp} \label{subsec:HeightsToCusp}
Recall that a truncated convex cusp was defined as a union of \emph{horoprisms}. A horoprism is a semi-ideal pyramid with a neighborhood of the ideal vertex cut off along a horosphere. A horoprism has a \emph{hyperbolic base} and a \emph{Euclidean base}. The lateral edges of a horoprism are orthogonal to its Euclidean base. The lengths of lateral edges are called \emph{heights} of the horoprism. Clearly, a horoprism is uniquely determined by the hyperbolic base and the heights. In what follows, we consider only triangular horoprisms.

Cutting a truncated convex cusp into triangular horoprisms produces a pair $(T,h)$, where $T$ is a geodesic triangulation of $(\Tor,g)$, and $h=(h_i)_{i\in\Sigma}$ is the collection of heights of the horoprisms, which at the same time are the truncated particle lengths in the cusp. Occasionally, there is some freedom in the choice of $T$, since it may be any refinement of the canonical face decomposition of the cusp boundary.

Our goal is to prove
\begin{proposition} \label{prp:HeightsToCusp}
A truncated convex cusp is uniquely determined by its truncated particle lengths.
\end{proposition}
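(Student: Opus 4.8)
The plan is to reconstruct the truncated cusp from the data $(\Tor,g)$ together with the truncated particle lengths $h=(h_i)_{i\in\Sigma}$, and show that this reconstruction is forced. The key point is that a triangular horoprism is rigidly determined by its hyperbolic base triangle and the three heights at its vertices; so once we know which geodesic triangulation $T$ of $(\Tor,g)$ underlies the cusp, the cusp itself is fixed. The real content is therefore that $h$ determines $T$ — i.e. two convex cusps with particles, both with boundary $(\Tor,g)$ and with the same $h$ (or same $[h]$), cannot have genuinely different face structures.

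First I would set up the local picture: given an edge $ij$ of $(\Tor,g)$ of length $\lambda$ together with the two heights $h_i,h_j$, Lemma \ref{horosphere lemma} gives the angle $\rho_i$ between $ij$ and the particle at $i$ via $\cos\rho_i=(\cosh\lambda-e^{h_j-h_i})/\sinh\lambda$; and Lemma \ref{lemmadistancelike} together with Lemma \ref{lem:SinhForm} controls how $e^h$ varies linearly (after the horospherical change of variable) along a geodesic. So the function $x\mapsto e^{\dist(x,B)}$ on the universal cover of $\partial M$, extended, is the restriction of an affine-type function, and the particle lengths at the three vertices of any face determine the entire ``height function'' on that face, hence the embedding of the corresponding semi-ideal pyramid up to isometry of $\H^3$. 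The plan is then to develop: pick a base triangle, develop it into $\H^2$ inside a horosphere picture, attach the apex $c$, and grow the development triangle by triangle around $(\Tor,g)$; the heights $h$ tell us at each step exactly where the next vertex lands, so the developed cusp is uniquely determined provided the triangulation is.

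To pin down the triangulation, I would argue by the standard ``flip/convexity'' dichotomy. Suppose $M$ and $M'$ are two convex cusps with boundary $(\Tor,g)$ and the same particle lengths $[h]$. Put the vertex set $\Sigma$ with the heights $h$ as points in $\H^3$ (equivalently: lift to the universal cover, getting a $G$-invariant point set in $\H^3$ with a distinguished center $c$, where each point sits at the prescribed distance from a fixed horosphere). This point configuration is the \emph{same} for $M$ and $M'$, because it depends only on $(\Tor,g)$ and $h$ via the trigonometry just described — the position of each vertex relative to $c$ and to its neighbours is determined by the induced metric and the heights. Both $\partial M$ and $\partial M'$ are then polyhedral surfaces on this common vertex set, each locally convex as seen from $c$ (the convexity condition $\theta_e\le\pi$). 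A locally convex polyhedral surface spanning a fixed discrete vertex set and separating $c$ from the vertices' ``outer side'' is unique — this is the parabolic analogue of the fact that the boundary of the convex hull is well defined, and it is exactly the kind of statement proved in Section \ref{sec:SpaceOfCusps}'s setup; concretely, if the two face structures differed, some edge $e$ of $M$ would be a diagonal of a planar or non-convex quadrilateral configuration in $M'$, contradicting strict convexity (or giving a flat quadrilateral, in which case the two triangulations describe the same cusp). Hence $\partial M=\partial M'$ as subsets of the developed picture, so $M=M'$.

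The main obstacle I anticipate is the convex-hull uniqueness step in the parabolic (noncompact, $G$-equivariant) setting: one must be careful that ``convex as seen from the center $c$'' genuinely forces uniqueness, handling loop edges and non-simply-connected faces (the phenomena flagged in the Example before Definition \ref{dfn:Th}), and that the point configuration is actually independent of the cusp — i.e. that $h$ plus $g$ really does determine the positions of the vertices and not merely the shapes of individual pyramids. Making this precise will likely require developing the whole cusp into $\H^3$ simultaneously and invoking the injectivity of the developing map for locally convex manifolds (cited above via \cite{Notesonnotes}), then comparing the two developed images, which share the same vertex set and the same center, as graphs over the horosphere. Once that is in place, strict convexity does the rest.
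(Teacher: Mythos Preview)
Your proposal has a genuine gap at the crucial step, and it stems from forgetting that the proposition concerns cusps \emph{with particles}, i.e.\ hyperbolic \emph{cone}-manifolds, not manifolds. When you write ``lift to the universal cover, getting a $G$-invariant point set in $\H^3$'' and later invoke injectivity of the developing map for locally convex manifolds, you are implicitly in the particle-free case. For a cusp with particles the holonomy around each particle is a rotation by $\omega_i\ne 2\pi$; there is no discrete group $G$ acting on $\H^3$, the developing map of the regular part is not an embedding, and there is no convex parabolic polyhedron to take the hull of. So the whole ``common vertex configuration in $\H^3$'' picture collapses.

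Even setting that aside, the assertion that ``this point configuration is the same for $M$ and $M'$, because it depends only on $(\Tor,g)$ and $h$'' is circular. To place a vertex in $\H^3$ you must develop along faces, and the development depends on the face structure --- equivalently, on how $h$ is extended from $\Sigma$ to all of $(\Tor,g)$. That extension is precisely what differs between $M$ and $M'$, so you cannot assume it agrees before proving the proposition.

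The paper avoids both problems by working intrinsically on $(\Tor,g)$. It encodes a truncated cusp by its \emph{distance function} $\widetilde h:(\Tor,g)\to\R$, the distance from the hyperbolic boundary to the truncating horosphere. This function is piecewise of the form $x\mapsto\log\cosh\dist(x,a)+b$ (``distance-like'') and is \emph{Q-concave} across edges exactly because the cusp is convex. Conversely any positive Q-concave PD function is the distance function of a unique truncated cusp. The proposition then reduces to: two Q-concave PD extensions $\widetilde{h_T}$, $\widetilde{h_{T'}}$ of the same boundary data $h:\Sigma\to\R$ coincide. This is proved by a clean inequality argument (Lemma~\ref{lem:BoundsForPD}): on an edge $e$ of $T$, the function $\widetilde{h_T}$ is distance-like while $\widetilde{h_{T'}}$ is Q-concave PD with the same endpoint values, forcing $\widetilde{h_{T'}}\ge\widetilde{h_T}$ at any interior crossing point with an edge of $T'$; swapping roles gives the reverse inequality, hence equality on all crossing points and therefore everywhere. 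This is the intrinsic, cone-manifold-safe substitute for your convex-hull uniqueness, and it is what you should aim for.
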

In other words, if $(T, h)$ and $(T', h)$ are pairs associated with the truncated convex cusps $M_{tr}, M'_{tr}$, respectively, then $M_{tr} = M'_{tr}$.

The following definition introduces a concept that will be used through the whole Section.
\begin{definition}
The \emph{distance function} of a truncated convex cusp $M_{tr} \in \M_{tr}(\Tor,g)$ is a map $(\Tor, g) \to \R$ that associates to every point on the hyperbolic boundary of $M_{tr}$ its distance from the Euclidean boundary.
\end{definition}
Due to Lemma \ref{lemmadistancelike} we know that in the interior of every face $F$ of $M_{tr}$ the distance function has the form
\begin{equation} \label{eqn:DistLike}
x \mapsto \log \cosh (\dist(x,a)) + b,
\end{equation}
where  $b > 0$ and $a$ is a point in $F$ or in the hyperbolic plane spanned by $F$.

We call a function of the form (\ref{eqn:DistLike}) on a subset of the hyperbolic plane a \emph{distance-like function}.

\begin{definition}
A function $f: (\Tor, g) \to \R$ is called \emph{piecewise distance-like function}, briefly \emph{PD function}, if there exists a geodesic triangulation $T$ of $(\Tor, g)$ such that $f$ is distance-like on every triangle of $T$.

A PD function $f$ is called \emph{$Q$-concave} if for every geodesic arc $\gamma$ on $(\Tor, g)$ at every kink point of the restriction $f|_\gamma$ the left derivative is greater than the right derivative.
\end{definition}
Figure \ref{fig:PDFunction} shows an example of a Q-concave PD function on the line.

Recall that a triangle $ijk$ of $T$ may have identifications on the boundary; so we mean by a distance-like function on $ijk$ a function induced from a distance-like function on its development.

\begin{figure}[ht]
\centerline{\input{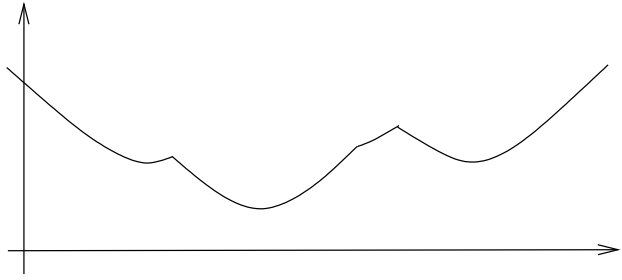}}
\caption{The graph of a Q-concave PD function.}
\label{fig:PDFunction}
\end{figure}

The following lemma is straightforward.
\begin{lemma}
The distance function of a truncated convex cusp is a Q-concave PD function. Conversely, every positive Q-concave PD function is the distance function of a unique truncated convex cusp.
\end{lemma}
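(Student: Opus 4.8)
The plan is to prove the two implications by passing through horoprisms, the real content in both directions being a single local derivative computation at a boundary edge.

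Given a truncated convex cusp $M_{tr}$ with distance function $d$, the PD and positivity claims are immediate from the discussion preceding the lemma: by Lemma~\ref{lemmadistancelike}, applied inside the geodesic plane carrying a face and using the foot of the perpendicular from the cusp's center $c$ to that plane, $d$ is distance-like on each face, hence on each triangle of any refining triangulation, and $d>0$ because the Euclidean boundary bounds a horoball disjoint from the faces. For Q-concavity, observe that a kink of $d|_\gamma$ can only occur where $\gamma$ crosses a boundary edge $e$ with dihedral angle $\theta(e)<\pi$; across an artificial edge, or where $\theta(e)=\pi$, the function $d$ is smooth. Develop a neighbourhood of such a crossing point $p$ into $\H^3$: one sees the two faces $F_1,F_2$ meeting along the line through $e$, and a horosphere $H$ centered at $c$ with $d=\dist(\cdot,H)$ near $p$ (on each face this is Lemma~\ref{lemmadistancelike}, and the two horospheres coincide because both develop the connected piece of Euclidean boundary along $e$). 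If $v^{-},v^{+}$ are the incoming and outgoing unit velocities of $\gamma$ at $p$, the jump ``left derivative minus right derivative'' is $\langle\nabla\dist(\cdot,H)(p),\,v^{-}-v^{+}\rangle$. Since $\gamma$ crosses $e$ transversally, $v^{-}-v^{+}=-b(w_1+w_2)$ with $b>0$, where $w_i$ is the unit vector at $p$ tangent to $F_i$, orthogonal to $e$, pointing away from $e$; and $-\nabla\dist(\cdot,H)(p)$ is the unit vector $u_c$ from $p$ towards $c$. So the jump equals $b\langle u_c,\,w_1+w_2\rangle$. Now the open ray from $p$ to $c$ lies in $\mathrm{int}(M_{tr})$ (it runs in the relative interior of the interior face over $e$), so the component of $u_c$ orthogonal to $e$ lies strictly inside the dihedral wedge at $e$, which has angle $\theta(e)<\pi$ and is bisected by $w_1+w_2$; hence the jump is strictly positive, which is Q-concavity.

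Conversely, let $f$ be a positive Q-concave PD function, distance-like on each triangle of a triangulation $T$ of $(\Tor,g)$. On a triangle $\sigma$, the identity $f|_\sigma(x)=\log\cosh\dist(x,a_\sigma)+b_\sigma$ together with Lemma~\ref{lemmadistancelike} read in reverse identifies the pair ``$\sigma$ with $f|_\sigma$'' with a unique horoprism $\Delta_\sigma$ over $\sigma$ with heights $f(i),f(j),f(k)$: take the semi-ideal pyramid over $\sigma$ whose apex is the ideal endpoint, on the side fixed by an orientation of $(\Tor,g)$, of the perpendicular to the plane of $\sigma$ at $a_\sigma$, and truncate by the appropriate horosphere centered at that ideal point. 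Along a shared edge $ij$ the faces to be glued are semi-ideal triangles determined up to isometry by the edge length and the two heights at $i,j$, hence isometric; carrying out these gluings produces a complete cone-manifold with polyhedral boundary isometric to $(\Tor,g)$ — a truncated polyhedral cusp with particles — whose distance function is $f$ by construction. To see it is convex, fix an edge $e$ of $T$; its dihedral angle is $\theta=\theta_1+\theta_2$, where $\theta_k<\pi$ is the dihedral angle along $e$ of the semi-ideal pyramid over the triangle $\sigma_k$ adjacent to $e$ (a semi-ideal pyramid, being a convex hull, is convex, and $\theta_k\neq\pi$ since the apex is not coplanar with the base). If $\theta>\pi$, the computation above, now read in the constructed complex with the normal direction $z$ of the interior face over $e$ (which sits at angle $\theta_1$ from $w_1$ inside the wedge) playing the role of the orthogonal component of $u_c$, gives $\langle z,w_1+w_2\rangle<0$: the conditions $\theta_1,\theta_2<\pi$ force $z$ to make an angle $>\pi/2$ with the bisector $w_1+w_2$ of the complementary wedge. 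This makes the derivative jump of $f=d$ negative at a kink, contradicting Q-concavity. Hence $\theta\le\pi$ at every edge and the cusp is convex. Finally, uniqueness: two truncated convex cusps with the same distance function $f$, developed over a common refinement of their face decompositions on whose triangles $f$ is distance-like, are glued from the same horoprisms in the same pattern, hence coincide; and the construction does not depend on $T$, since refining $T$ only subdivides the horoprisms.

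The routine parts are the PD and positivity statements and the bookkeeping of the gluing; the one step requiring care, and the place where convexity genuinely enters, is the local derivative computation at a boundary edge together with its converse. In the ``cusp $\to$ function'' direction it is short. In the ``function $\to$ cusp'' direction it rests on the extra input that each building-block horoprism has dihedral angle $<\pi$ along the edges of its base (inherited from convexity of semi-ideal pyramids); without this a ``Q-concave'' distance function could a priori assemble into a non-convex complex.
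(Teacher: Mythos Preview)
Your argument is correct. The paper declares this lemma ``straightforward'' and gives no proof, so there is no approach to compare against; what you have written is a valid unpacking of the omitted details. The decisive computation in both directions is indeed the one you isolate: with the notation you set up, the derivative jump of the distance function across an edge $e$ equals a positive multiple of $\langle z, w_1+w_2\rangle = \cos\theta_1+\cos\theta_2 = 2\cos\frac{\theta}{2}\cos\frac{\theta_1-\theta_2}{2}$, where $\theta_k\in(0,\pi)$ is the dihedral angle of the $k$th semi-ideal pyramid along $e$ and $\theta=\theta_1+\theta_2$. Since $|\theta_1-\theta_2|<\pi$, this quantity is positive, zero, or negative exactly as $\theta<\pi$, $=\pi$, or $>\pi$, which gives both implications at once. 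Your phrasing ``bisector $w_1+w_2$ of the complementary wedge'' is slightly awkward (for $\theta>\pi$ the vector $w_1+w_2$ points into the smaller, complementary wedge, but the cleanest justification is just the cosine identity above), and in the uniqueness paragraph the common refinement may introduce new vertices outside $\Sigma$, which is harmless since you only use it to cut both cusps into identical horoprisms. None of this affects the validity of the proof.
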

Thus we can identify the space $\M_{tr}(\Tor,g)$ with the space of positive Q-concave PD functions on $(\Tor,g)$.

\begin{definition}
Let $T$ be a geodesic triangulation of $(\Tor, g)$, and let $h: \Sigma \to \R$ be a function on the singular set of $(\Tor, g)$. By $\widetilde{h_T}: (\Tor, g) \to \R$ we denote the PD extension of the function $h$ with respect to the triangulation $T$.
\end{definition}
The function $\widetilde{h_T}$ does not always exist, but it is easy to see that if it does, then it is well-defined and unique.

Extending a function $h: \Sigma \to \R$ to a positive Q-concave PD function is equivalent to constructing a truncated convex cusp with truncated particle lengths $h$. Therefore Proposition \ref{prp:HeightsToCusp} will follow from
\begin{lemma}
Let $h$ be a function on $\Sigma$ and let $T, T'$ be two geodesic triangulations of $(\Tor,g)$. If both functions $\widetilde{h_T}$ and $\widetilde{h_{T'}}$ exist and are $Q$-concave, then they are equal.
\end{lemma}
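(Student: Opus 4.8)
The plan is to show that the two PD functions $\widetilde{h_T}$ and $\widetilde{h_{T'}}$ coincide by exploiting the $Q$-concavity to characterize each of them as a pointwise maximum, and then observing that this characterization does not refer to the triangulation. Concretely, I would first establish that a positive $Q$-concave PD function $f$ with $f|_\Sigma = h$ satisfies, at every point $x \in (\Tor,g)$, the identity
\[
f(x) = \max_{F} f_F(x),
\]
where $F$ ranges over all (geodesic) triangles with vertices in $\Sigma$ on which a distance-like function $f_F$ with boundary values $h$ on the vertices of $F$ exists, and $f_F(x)$ is the value of that distance-like extension at $x$ (interpreted via a development, with $f_F(x) := -\infty$ if $x$ does not lie in that development or no such $f_F$ exists). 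Since the right-hand side makes no reference to $T$, once this is proved for an arbitrary positive $Q$-concave PD extension of $h$, it applies equally to $\widetilde{h_T}$ and to $\widetilde{h_{T'}}$, giving $\widetilde{h_T} = \widetilde{h_{T'}}$.

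To prove the max-formula, I would argue as follows. For the inequality $f(x) \ge f_F(x)$: fix a triangle $F$ admitting a distance-like extension $f_F$ of $h$, and fix $x$ in its development. Compare $f$ with $f_F$ along geodesic segments emanating from $x$ toward the vertices of $F$, or more robustly triangulate $F$ against the triangulation $T$ underlying $f$ and use that both $f$ and $f_F$ are distance-like on each small cell, agree on $\Sigma$, and that $f$ is $Q$-concave while $f_F$ is globally distance-like (hence has no downward kinks, so the difference $f - f_F$ is $Q$-concave on each geodesic arc inside $F$); a $Q$-concave function on a geodesic segment that is $\ge 0$ at both endpoints is $\ge 0$ throughout, which gives $f \ge f_F$ on all of $F$. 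For the reverse inequality, I would take $F$ to be the triangle of the triangulation $T$ (refined if necessary) containing $x$; then $f$ itself restricted to $F$ is distance-like with the correct boundary values, so $f(x) = f_F(x) \le \max_F f_F(x)$. One subtlety worth spelling out is that a candidate triangle $F$ may be degenerate or its distance-like extension may fail to be positive; the positivity of $f$ and the convexity estimate of Lemma \ref{lemmadistancelike} (the form $\log\cosh(\dist(x,a)) + b$) are exactly what keep the comparison argument valid, and one should note that distance-like functions are concave along geodesics in the relevant sense, so the sign comparison goes through.

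The main obstacle, I expect, is the bookkeeping around triangles with self-identifications and multiple edges: a triangle $F$ of $T$ or $T'$ need not embed in $(\Tor, g)$, so "$f_F$ on $F$" and "$x \in F$" have to be phrased in terms of developments, and the comparison of $f$ with $f_F$ has to be carried out in a common development of a neighborhood of a geodesic segment rather than on the torus itself. This is a matter of care rather than of a new idea: on the universal cover (or on any simply connected development containing the segment in question), both functions lift to genuine distance-like functions and the $Q$-concavity of the lift of $f$ is inherited, so the one-dimensional lemma "$Q$-concave and nonnegative at the endpoints implies nonnegative in between" can be applied verbatim. Once the max-formula is in place, the conclusion $\widetilde{h_T} = \widetilde{h_{T'}}$, and hence Proposition \ref{prp:HeightsToCusp}, is immediate.
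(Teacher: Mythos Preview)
Your max-formula strategy is a plausible and genuinely different route from the paper's, but as written it has two real problems. First, the assertion that ``distance-like functions are concave along geodesics'' is false: $\log\cosh$ is convex, so distance-like functions are convex along geodesics. Consequently your claim that ``a $Q$-concave function on a segment that is $\ge 0$ at both endpoints is $\ge 0$ throughout'' does not follow from any concavity; what actually makes the one-dimensional comparison work (this is Lemma~\ref{lem:BoundsForPD}) is that the difference of two $\log\cosh$ functions is \emph{monotone} on each smooth piece, which together with the concave-kink condition forces the sign. Second, your passage from the edges of $F$ to its interior is not justified: ``triangulate $F$ against $T$ and use that both agree on $\Sigma$'' fails because $f_F$ is pinned to $h$ only at the three vertices of $F$, not at other points of $\Sigma$ nor at the edge-intersection vertices of the refined decomposition. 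This can be repaired (first get $f \ge f_F$ on $\partial F$ via Lemma~\ref{lem:BoundsForPD}, then push into the interior along a cevian from a vertex using the same monotonicity-of-the-difference trick), but it needs to be said explicitly.

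The paper avoids all of this. It applies Lemma~\ref{lem:BoundsForPD} directly on each edge $e$ of $T$: there $\widetilde{h_T}$ is distance-like and $\widetilde{h_{T'}}$ is Q-concave PD with the same endpoint values, so $\widetilde{h_{T'}} \ge \widetilde{h_T}$ on $e$; swapping roles on edges of $T'$ gives the reverse inequality, hence equality at every intersection point of an edge of $T$ with an edge of $T'$. Both functions are then distance-like on each polygon of the common refinement and agree at its vertices, so they coincide everywhere. Your approach, once fixed, proves a strictly stronger max-characterization than is needed; the paper's argument is shorter because it never leaves dimension one and never needs to compare on the interior of a triangle.
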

\begin{proof}
Let $x$ be an intersection point of an edge $e$ of $T$ and an edge $e'$ of $T'$. Then the function $\widetilde{h_T}$ is distance-like on $e$, and the function $\widetilde{h_{T'}}$ is PD Q-concave on $e$. By Lemma \ref{lem:BoundsForPD}, this implies $\widetilde{h_{T'}}(x) \ge \widetilde{h_T}(x)$. Considering the edge $e'$ instead of $e$, we get an inverse inequality. Hence $\widetilde{h_T}(x) = \widetilde{h_{T'}}(x)$. The union of edges of $T$ and $T'$ subdivides $(\Tor, g)$ into hyperbolic polygons such that both functions $\widetilde{h_T}$ and $\widetilde{h_{T'}}$ are distance-like on every polygon of the subdivision. As we just proved, $\widetilde{h_T}(x)$ and $\widetilde{h_{T'}}(x)$ take equal values at the vertices of the polygons. It follows that $\widetilde{h_T} = \widetilde{h_{T'}}$.
\end{proof}

Proposition \ref{prp:HeightsToCusp} is proved. It implies that the map
$$
\begin{array}{rcl}
\M_{tr}(\Tor, g) & \to & \R^\Sigma\\
(T, h) & \mapsto & h
\end{array}
$$
is an embedding. Changing a truncation of a convex cusp results in adding a common constant to all of the truncated particle lengths. Thus we have an embedding
$$
\begin{array}{rcl}
\M(\Tor, g) & \to & \R^\Sigma / \langle \mathbb{1} \rangle\\
(T, [h]) & \mapsto & [h],
\end{array}
$$
where $[h]$ is an equivalence class under $(h_1, \ldots, h_n) \sim (h_1 + c, \ldots, h_n + c)$.

For a geodesic triangulation $T$ of $(\Tor, g)$, denote by $\M_{tr}^T(\Tor, g)$ the space of truncated convex cusps that have a representative of the form $(T, h)$. In other words, $M_{tr} \in \M_{tr}^T(\Tor, g)$ iff $M_{tr}$ can be cut into horoprisms over the triangulation $T$. We have a decomposition
$$
\M_{tr}(\Tor, g) = \bigcup_T \M_{tr}^T(\Tor, g).
$$
Clearly, we have a similar decomposition for $\M(\Tor,g)$, where
$$
\M^T(\Tor,g) = \M_{tr}^T(\Tor, g)/\langle \mathbb{1} \rangle.
$$

Let us denote by $\widetilde{h}$ the distance function of the truncated convex cusp with truncated particle lengths $h = (h_i)_{i\in \Sigma}$. In other words, $\widetilde{h}$ is the unique Q-concave PD extension of the function $h: i \mapsto h_i$.

\subsection{Lemmas} \label{subsec:Lemmas} We put here lemmas used in the proof of Proposition \ref{subsec:MTg}.

\begin{lemma} \label{lem:DistFuncBound}
The distance function $\widetilde{h}$ of a truncated convex cusp satisfies the inequality
$$
|\widetilde{h}(x) - \widetilde{h}(y)| < \dist(x,y)
$$
for any $x,y \in \Tor$, where $\dist$ denotes the shortest path distance for the metric $g$. In particular,
$$
\max_\Tor \widetilde{h} - \min_\Tor \widetilde{h} < \diam(T,g).
$$

\end{lemma}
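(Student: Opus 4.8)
The plan is to reduce the global Lipschitz bound to a local statement about distance-like functions, and then patch together along a shortest geodesic. Recall that on the interior of each face $F$ of $M_{tr}$, the distance function $\widetilde h$ has the form $x \mapsto \log\cosh(\dist(x,a)) + b$ by Lemma \ref{lemmadistancelike} and the surrounding discussion. The key elementary observation is that a distance-like function $f(x) = \log\cosh(\dist(x,a)) + b$ on the hyperbolic plane is $1$-Lipschitz: its gradient has norm $\tanh(\dist(x,a)) < 1$. Hence along any geodesic arc $\gamma$ contained in a single face, $|\widetilde h(\gamma(t)) - \widetilde h(\gamma(s))| < |t-s|$, with strict inequality since $\tanh$ never attains $1$ on a compact arc.

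Next I would handle a geodesic arc $\gamma$ crossing several faces. Since $\widetilde h$ is a PD function, $\gamma$ meets finitely many edges of the triangulation $T$, cutting $\gamma$ into finitely many subarcs $\gamma_1, \ldots, \gamma_m$, each lying in one triangle. On each $\gamma_k$ the strict $1$-Lipschitz bound holds. At the kink points where consecutive subarcs meet, $\widetilde h$ is continuous (it is $Q$-concave, in particular continuous), so by the triangle inequality the total variation of $\widetilde h$ along $\gamma$ is strictly less than the length of $\gamma$, i.e. $|\widetilde h(x) - \widetilde h(y)| < \operatorname{length}(\gamma)$ for the endpoints $x,y$ of $\gamma$. (A slight care is needed if $\gamma$ runs along an edge for a while, or passes through a vertex of $\Sigma$; in those cases one perturbs $\gamma$ slightly, or treats the vertex as an extra subdivision point — the estimate is unaffected since we only need the sum of strict bounds over finitely many pieces.) Finally, taking $\gamma$ to be a shortest geodesic between $x$ and $y$ gives $|\widetilde h(x) - \widetilde h(y)| < \dist(x,y)$, and the displayed corollary follows by choosing $x,y$ to realize $\max_\Tor \widetilde h$ and $\min_\Tor \widetilde h$ (these are attained since $\Tor$ is compact and $\widetilde h$ continuous), so the difference is bounded by $\dist(x,y) \le \diam(\Tor, g)$.

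The only mildly delicate point is the degenerate configurations: a shortest path $\gamma$ might pass through a cone point of $g$, or might coincide with an edge of $T$ over a segment, so that the clean "crosses finitely many faces transversally" picture fails. I expect this to be the main obstacle, though a minor one. It is handled by noting that the set of pairs $(x,y)$ joined by a shortest geodesic avoiding $\Sigma$ and in general position with respect to the finitely many edges of $T$ is dense, the bound holds there, and both sides of the inequality are continuous in $(x,y)$; since the inequality is strict on a dense set one must check that no limiting pair achieves equality — but equality would force $\tanh(\dist(\cdot,a)) \equiv 1$ along a subarc of positive length in some face, which is impossible. Hence strictness persists everywhere.
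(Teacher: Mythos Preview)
Your proposal is correct and follows exactly the same idea as the paper: the paper's entire proof is the single sentence ``This follows from the fact that the gradient of the distance function is always smaller than $1$,'' and you have simply unpacked this (the gradient norm is $\tanh(\dist(\cdot,a))<1$) and carefully handled the piecewise structure and degenerate configurations. Nothing more is needed.
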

\begin{proof} This follows from the fact that the gradient of the distance function is always smaller than $1$. \end{proof}

\begin{lemma} \label{lem:TriangFin}
The space $\M_{tr}^T(\Tor, g)$ is non-empty only for finitely many geodesic triangulations $T$ of $(\Tor, g)$.
\end{lemma}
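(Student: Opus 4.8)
The plan is to combine three ingredients: an Euler-characteristic count that pins down the number of edges of any triangulation, a metric upper bound on the length of each edge that comes from the convexity of the cusp via its distance function, and the finiteness of geodesic arcs of bounded length between two marked points of a compact cone surface. For the first ingredient, let $T$ be a geodesic triangulation of $(\Tor,g)$ with vertex set $\Sigma$, $n=|\Sigma|$, and let $E$, $F$ denote its numbers of edges and triangles. Every triangle has three sides and every edge is incident to exactly two triangle-sides, counted with multiplicity (which takes care of loops and multiple edges), so $3F=2E$; together with $n-E+F=\chi(\Tor)=0$ this forces $E=3n$ and $F=2n$. Hence every geodesic triangulation of $(\Tor,g)$ has exactly $3n$ edges.

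For the second ingredient, suppose $\M_{tr}^T(\Tor,g)\neq\emptyset$ and fix $M_{tr}$ in it, with distance function $\widetilde h$. Let $ij$ be an edge of $T$ and $\lambda=\dist(i,j)$ its length. The edge lies in the closure of a face of $M_{tr}$, and by Lemma \ref{lemmadistancelike}, in the form recorded in equation (\ref{eqn:DistLike}), $\widetilde h$ on that face equals $x\mapsto\log\cosh(\dist(x,a))+b$ with $b>0$ and $a$ a point of the ambient hyperbolic plane. Dropping a perpendicular from $a$ to the geodesic carrying $ij$ and applying the hyperbolic Pythagorean theorem, the restriction of $\widetilde h$ to $ij$, in an arc-length parameter $t\in[0,\lambda]$, takes the form $t\mapsto\log\cosh(t-t_0)+b'$ for some $t_0\in\R$ and $b'>0$. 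A short elementary estimate — distinguishing whether $t_0\in[0,\lambda]$ and using that $s\mapsto\cosh(\lambda+s)/\cosh s$ is increasing on $s\ge 0$ — shows the oscillation of this function on $[0,\lambda]$ is at least $\log\cosh(\lambda/2)$. By Lemma \ref{lem:DistFuncBound} the oscillation of $\widetilde h$ over all of $\Tor$ is at most $\diam(\Tor,g)=:D$, so $\log\cosh(\lambda/2)<D$; that is, every edge of $T$ has length less than the constant $L:=2\arccosh(e^{D})$, which depends only on $g$.

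For the third ingredient, I will use the standard fact that a compact hyperbolic cone surface contains, for any two of its cone points and any bound $L$, only finitely many geodesic arcs of length $<L$ joining them with interior disjoint from the cone points. Granting this, let $\mathcal A$ be the finite set of all geodesic arcs of $(\Tor,g)$ with endpoints in $\Sigma$, interior in $\Tor\setminus\Sigma$, and length $<L$. By the first two steps, whenever $\M_{tr}^T(\Tor,g)\neq\emptyset$ the edge set of $T$ is a $3n$-element subset of $\mathcal A$, and a triangulation is determined by its set of edges; hence there are at most $\binom{|\mathcal A|}{3n}$ such triangulations, which is finite.

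The only non-routine point is the fact invoked in the third step. Away from the cone points the surface is smooth and negatively curved, so geodesics there are locally rigid and an Arzelà--Ascoli argument applies to the arcs parametrized by arc length; the delicate case is a sequence of such arcs accumulating onto a limit path that passes through a cone point $k$. One handles it by observing that an arc of length $<L$ can wind near $k$ only boundedly often, so only finitely many homotopy classes of behaviour near $k$ can occur, each carrying a unique geodesic; consequently only finitely many members of the sequence can come close to $k$, the limit path in fact avoids all cone points, and one is back in the smooth, locally rigid situation, contradicting the assumed distinctness. Making this rigorous is where the real work lies; by contrast the first two steps are essentially bookkeeping together with the single hyperbolic-trigonometry estimate.
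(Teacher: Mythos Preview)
Your argument is correct and follows the paper's two-step strategy: bound the length of every edge via the oscillation of the distance function along it (you give the explicit $L=2\arccosh(e^{D})$ where the paper merely asserts existence of such an $L$), then use that only finitely many geodesic arcs of bounded length join points of $\Sigma$ (the paper cites \cite[Proposition~1]{ILTC} rather than sketching the Arzel\`a--Ascoli argument you outline). The Euler-characteristic count is correct but unnecessary: once the set $\mathcal{A}$ of candidate edges is finite, so is the set of its subsets, and a triangulation is determined by its edge set.
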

\begin{proof} The proof proceeds in two steps. First, we show that there is a constant $L$ depending on the metric $g$, such that no triangulation associated with a cusp with boundary $(\Tor,g)$ has an edge of length greater than $L$. Second, we note that there are only finitely many geodesic arcs of length $\le L$ between points of $\Sigma$. Then the number of geodesic triangulations with edges of length $\le L$ is also finite and we are done.

Let $\widetilde{h}$ be the distance function of a truncated convex cusp, and let $e$ be an edge of an associated triangulation. The restriction of $\widetilde{h}$ to $e$ is a function of the form $\log\cosh(x-a) + b$,  where $x$ is the arc length parameter on $e$. It can easily be shown that for any $C \in \R$ there exists an $L \in \R$ such that
$$
\max_e \widetilde{h} - \min_e \widetilde{h} > C,
$$
as soon as the length of $e$ is greater than $L$. Put $C = \mathrm{diam\,}(\Tor, g)$. Then the length of $e$ cannot be greater than $L$ due to Lemma \ref{lem:DistFuncBound}.

The lengths of geodesic arcs between the points of $\Sigma$ form a discrete subset of $\R$ by the argument from \cite[Proposition 1]{ILTC}.
\end{proof}

\textit{Remark.} Lemmas \ref{lem:DistFuncBound} and \ref{lem:TriangFin} hold also for non-convex polyhedral cusps with particles.

\begin{definition} \label{dfn:Tilde}
For two points $i,j$ on the real line and two real numbers $h_i, h_j$, let
$$
\widetilde{h_{ij}}: \R \to \R
$$
denote the distance-like function that takes values $h_i, h_j$ at $i$ and $j$, respectively.
\end{definition}
Note that the function $\widetilde{h_{ij}}$ exists iff $|h_i-h_j| < \dist(i,j)$.

\begin{lemma} \label{lem:BoundsForPD}
For any Q-concave PD function $\widetilde{h}$ on $\R$ such that $\widetilde{h}(i) = h_i, \widetilde{h}(j) = h_j$ the following holds:
$$
\begin{array}{cc}
\widetilde{h}(x) \ge \widetilde{h_{ij}}(x) & \mbox{for all } x \in [i,j],\\
\widetilde{h}(x) \le \widetilde{h_{ij}}(x) & \mbox{for all } x \notin [i,j].
\end{array}
$$
\end{lemma}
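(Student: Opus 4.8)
The plan is to study the single function $\phi:=\widetilde{h}-\widetilde{h_{ij}}$ on $\R$. It is continuous (indeed $\widetilde{h_{ij}}$ exists, since the gradient bound of Lemma~\ref{lem:DistFuncBound} gives $|h_i-h_j|<\dist(i,j)$), and it vanishes at $i$ and $j$; the two claims amount to $\phi\ge 0$ on $[i,j]$ and $\phi\le 0$ on $\R\setminus[i,j]$. The key preliminary observation is that, after developing a geodesic segment isometrically onto $\R$, a distance-like function has the form $t\mapsto\log\cosh(t-a)+c$, so on any interval where $\widetilde h$ is distance-like the difference $\phi$ has derivative $\tanh(t-a_1)-\tanh(t-a_2)$. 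Since $\tanh$ is strictly increasing, this derivative is identically $0$ when $a_1=a_2$ and otherwise of constant, nowhere-vanishing sign. Hence on every interval of the subdivision underlying $\widetilde h$, the function $\phi$ is either constant or strictly monotone, and in the monotone case $\phi'$ keeps one fixed sign across the whole interval. After refining the subdivision if necessary — harmless for $Q$-concavity, since $\widetilde h$ is smooth at the new points — I may assume $i,j$ are subdivision points and write $i=t_0<t_1<\dots<t_N=j$ for the subdivision points in $[i,j]$.

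For the inequality on $[i,j]$ I would argue by contradiction. Because $\phi$ is constant or strictly monotone on each $[t_\ell,t_{\ell+1}]$, its minimum over $[i,j]$ is attained at some $t_\ell$; assume this minimum $m$ is negative and let $t=t_{\ell_0}$ be the \emph{leftmost} subdivision point with $\phi(t)=m$. As $\phi(t_0)=\phi(t_N)=0>m$ we have $0<\ell_0<N$. On $[t_{\ell_0-1},t]$ the function drops from a value $>m$ (it is $\ge m$ and, by leftmost-ness, $\ne m$) down to $m$, so it is strictly decreasing there and its left derivative at $t$ is strictly negative. On $[t,t_{\ell_0+1}]\subseteq[i,j]$ the left-endpoint value $m$ is already the minimum over the interval, so $\phi$ is there constant or strictly increasing, and its right derivative at $t$ is nonnegative. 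This contradicts $Q$-concavity at $t$, which demands left derivative $\ge$ right derivative. Hence $m\ge 0$, proving $\phi\ge 0$ on $[i,j]$.

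For the inequality outside $[i,j]$, take $[j,+\infty)$ (the other side is symmetric) and propagate the sign of the derivative outward. From $\phi\ge 0$ on $[i,j]$ and $\phi(j)=0$, the left derivative of $\phi$ at $j$ is $\le 0$, so by $Q$-concavity its right derivative at $j$ is $\le 0$ as well. On the first subdivision interval right of $j$, $\phi$ is either constant — hence $\equiv\phi(j)=0$, carrying a zero derivative forward — or strictly monotone, in which case $\phi'$ has a fixed sign that is $\le 0$ at the left end, hence $<0$ throughout, so $\phi$ strictly decreases and stays $\le 0$; either way $\phi\le 0$ on the interval and its left derivative at the right endpoint is $\le 0$, and $Q$-concavity again yields a nonpositive right derivative there, so the argument repeats over successive intervals (and applies directly on the last, possibly unbounded, piece). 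The step I expect to need the most care is precisely this bookkeeping at the subdivision points — combining the fixed sign of $\phi'$ on a monotone piece with the $Q$-concavity inequality on one-sided derivatives, and checking that the constant pieces transmit the correct derivative sign across each vertex.
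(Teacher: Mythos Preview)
Your proof is correct and follows essentially the same approach as the paper's: both study $\phi=\widetilde{h}-\widetilde{h_{ij}}$, observe that on each subdivision interval $\phi'(t)=\tanh(t-a_1)-\tanh(t-a_2)$ has constant sign (equivalently, the paper says the difference of two $\log\cosh$ translates is monotone, by convexity), and then exploit that $\phi$ inherits $Q$-concavity from $\widetilde{h}$ because $\widetilde{h_{ij}}$ is smooth. The only difference is packaging: the paper phrases the conclusion as ``once $\phi$ is nonincreasing on one interval it stays nonincreasing on all subsequent ones,'' which handles both the interior and exterior estimates in one stroke, whereas you run a minimum-by-contradiction argument on $[i,j]$ and a separate sign-propagation argument on each ray; you might streamline by adopting the paper's unimodality observation, and you should state explicitly (as the paper does) that $\phi$ is $Q$-concave, since your contradiction step invokes the one-sided derivative inequality for $\phi$ rather than for $\widetilde{h}$.
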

\begin{proof}
Consider the function $\widetilde{h} - \widetilde{h_{ij}}$. There are numbers $x_1 < x_2 < \cdots < x_n$ and $a_0, b_0, \ldots, a_n, b_n$ such that
$$
\widetilde{h}|_{[x_m, x_{m+1}]} = \log\cosh(x-a_m) + b_m
$$
for all $m$ from $0$ to $n$, where we put $x_0 = -\infty, x_{n+1} = +\infty$. Since $\log\cosh x$ is a convex function, $(\widetilde{h} - \widetilde{h_{ij}})|_{[x_m, x_{m+1}]}$ is a monotone function for every $m$. Besides, since $\widetilde{h}$ is Q-concave, $\widetilde{h} - \widetilde{h_{ij}}$ is Q-concave too. Thus, if $\widetilde{h} - \widetilde{h_{ij}}$ is monotone decreasing (or constant) on $[x_{m-1}, x_m]$, then it is also monotone decreasing (or constant) on $[x_m, x_{m+1}]$. Together with
$$
(\widetilde{h} - \widetilde{h_{ij}})(i) = (\widetilde{h} - \widetilde{h_{ij}})(j) = 0
$$
this implies that the function $\widetilde{h} - \widetilde{h_{ij}}$ is non-negative on the interval $[i,j]$ and non-positive outside of it. The lemma follows.
\end{proof}

\begin{definition} \label{dfn:Slope}
Let $\widetilde{h}(x) = \log \cosh (\dist(x,a)) + b$ be a distance-like function on a subset of $\H^2$. We call the distance $\dist(x,a)$ the \emph{slope} of $\widetilde{h}$ at $x$ and denote it by $\mathrm{slope}_x(\widetilde{h})$.
\end{definition}
For a PD function $\widetilde{h}$ on the torus, $\mathrm{slope}_x(\widetilde{h})$ is defined in an obvious way, provided that $\widetilde{h}$ is locally distance-like at $x$. Clearly, $\mathrm{slope}_x(\widetilde{h})$ depends only on the
gradient norm of $\widetilde{h}$ at $x$, and it tends to $\infty$ as the gradient norm tends to $1$.

\begin{lemma} \label{lem:SlopeBound}
The slopes of the convex cusps with the boundary $(\Tor,g)$ are uniformly bounded. That is, there exists a constant $D \in \R$ such that
$$
\mathrm{slope}_x(\widetilde{h}) \le D
$$
for every Q-concave PD function $\widetilde{h}$ on $(\Tor, g)$ at every point $x \in \Tor$.
\end{lemma}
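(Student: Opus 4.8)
The plan is to pass to the upper half-space model of $\H^3$ with the center $c$ of the cusp at $\infty$, and to exploit that a convex cusp is sandwiched between two horoballs centered at $c$. First, observe that $\mathrm{slope}_x(\widetilde h)$ is unchanged when $\widetilde h$ is replaced by $\widetilde h + \mathrm{const}$: by Definition \ref{dfn:Slope} it depends only on the foot $a$ of the local distance-like expression, not on the additive term $b$. Hence it is an invariant of the convex cusp $M$ underlying $\widetilde h$, insensitive to the choice of truncation. Develop a face $F$ of $M$ into $\H^3$; it lies on a totally geodesic plane, which in this model is a Euclidean hemisphere $S_F$ of some radius $R_F$, whose topmost point, at Euclidean height $R_F$, is precisely the foot $a_F$ in the expression (\ref{eqn:DistLike}). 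A direct computation in the half-space model gives, for $x \in F$ at Euclidean height $t_x$,
$$
\cosh\big(\mathrm{slope}_x(\widetilde h)\big) = \cosh\big(\dist(x, a_F)\big) = \frac{R_F}{t_x}.
$$
It therefore suffices to bound $R_F/t_x$ from above, uniformly over all cusps with boundary $(\Tor,g)$, all faces $F$, and all $x \in F$.

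Next I would control the numerator by convexity and the denominator by Lemma \ref{lem:DistFuncBound}. In the no-particle case the universal cover develops to a genuine convex polyhedron $P$, and there are horoballs $\{t \ge H_1\} \subseteq P \subseteq \{t \ge H_2\}$, where one may take $H_1 = \max\{t_y : y \in \partial M\}$ and $H_2 = \min\{t_y : y \in \partial M\}$ (the parabolic group acts by horizontal translations, so the height function descends to $\Tor$). Since $P$ lies on the far side of the supporting plane $S_F$ from the Euclidean ball it bounds, and $\{t \ge H_1\} \subseteq P$, that ball is contained in $\{t < H_1\}$; as its top has height $R_F$, this gives $R_F \le H_1$. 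Also $t_x \ge H_2$ because $x \in \partial M$. Hence $\cosh(\mathrm{slope}_x(\widetilde h)) \le H_1/H_2$. Finally, the distance function of $M$ truncated at the horosphere $\{t = H_1\}$ is $y \mapsto \log(H_1/t_y)$, so its maximum minus its minimum equals $\log(H_1/H_2)$; by Lemma \ref{lem:DistFuncBound} this is $< \diam(\Tor, g)$, and one may take $D = \arccosh\!\big(e^{\diam(\Tor, g)}\big)$.

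When particles are present $M$ is not globally convex in $\H^3$, and the only step requiring extra care is the inequality $R_F \le H_1$. I would establish it locally: the thin part of a truncated cusp develops to a genuine horoball sector $\{t \ge H_1\}$ (as established in Section \ref{Definitions}), and one can develop into $\H^3$ a simply connected neighborhood of $\mathrm{int}(F)$ together with a collar joining it to this sector while avoiding the particles. The convexity hypothesis — all boundary dihedral angles around $F$ are $\le \pi$ — makes this developed region locally convex along $\partial F$, so the horoball sector lies on the side of $S_F$ away from the ball it bounds, whence again $R_F \le H_1$; the remaining estimate is unchanged, and non-simply-connected faces and isolated vertices are handled chart by chart. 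I expect this verification — that the plane supporting each face stays below the thin part, equivalently that the constant $b$ in (\ref{eqn:DistLike}) does not become too negative after normalizing the truncation — to be the only genuinely delicate point; the rest is the displayed trigonometric identity combined with the diameter bound of Lemma \ref{lem:DistFuncBound}.
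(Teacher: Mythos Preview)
Your half-space reformulation is correct: the identity $\cosh(\mathrm{slope}_x(\widetilde h))=R_F/t_x$ holds, and the chain $R_F\le H_1$, $t_x\ge H_2$, $\log(H_1/H_2)<\diam(\Tor,g)$ yields the same constant $D=\arccosh\bigl(e^{\diam(\Tor,g)}\bigr)$ as the paper. In the no-particle case your supporting-plane argument for $R_F\le H_1$ is fine.

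The gap is precisely where you flag it, and your proposed local fix does not close it. When $a_F\notin F$, the apex of the hemisphere $S_F$ sits horizontally outside $F$. The region you develop --- $\mathrm{int}(F)$ together with a vertical collar up to the thin part --- covers only the cylinder over $F$, so the ``horoball sector'' you actually see is $\{t\ge H_1\}$ restricted to that cylinder. That this sector lies outside the Euclidean ball bounded by $S_F$ is automatic (it sits above $F\subset S_F$) and tells you nothing about $R_F$: the point $(c_F,R_F)$ at the top of $S_F$ is simply not in your developed region, so nothing prevents $R_F>H_1$. Local convexity along $\partial F$ only says the neighboring faces dip below $S_F$ near the shared edge; it does not propagate to a statement about the full horoball versus the full ball.

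The inequality $R_F\le H_1$ (equivalently $b_F\ge\min_\Tor\widetilde h$) is nevertheless true, but the clean justification is intrinsic: run the geodesic from $x\in F$ in the direction $-\grad_x\widetilde h$ and apply Lemma~\ref{lem:BoundsForPD} to bound $\widetilde h$ at the far end by the value $b_F$ of the face's own distance-like function. That is exactly the engine of the paper's proof, which follows the same geodesic for length $D$, invokes Lemma~\ref{lem:BoundsForPD}, and combines it with the convexity of $\log\cosh$ and Lemma~\ref{lem:DistFuncBound} --- never leaving the torus for the half-space model. So once you patch the delicate step, your argument reduces to the paper's; the extrinsic picture is a nice reinterpretation but does not furnish an independent route.
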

\begin{proof}
Let us show that the lemma holds for $\log \cosh D = \diam(\Tor, g)$. Assume the converse, and let $\widetilde{h}$ and $x$ be such that $\mathrm{slope}_x(\widetilde{h}) > D$. On the geodesic that starts at $x$ and runs in the direction of $-\mathrm{grad}_x(\widetilde{h})$, take the point $y$ at a distance $D$ from $x$. If we end up at a singular point before running the distance $D$, then perturb the point $x$ so that $\mathrm{slope}_x(\widetilde{h})$ is still greater than $D$. On the geodesic arc $xy$, consider functions $\widetilde{h}$ and $\widetilde{h_{x+}}$, where $\widetilde{h_{x+}}$ is the distance-like function that coincides with $\widetilde{h}$ in a neighborhood of $x$. By Lemma \ref{lem:BoundsForPD}, we have
$$
\widetilde{h}(y) \le \widetilde{h_{x+}}(y).
$$
Due to $\mathrm{slope}_x(\widetilde{h}) > |xy|$, the function $\widetilde{h_{x+}}$ is monotone decreasing on $xy$. The convexity of $\log \cosh x$ implies
$$
\widetilde{h}(x) - \widetilde{h_{x+}}(y) \ge  \log \cosh |xy| = \log \cosh D = \diam(\Tor, g).
$$
Therefore $\widetilde{h}(x) - \widetilde{h}(y) \ge \diam(\Tor, g) \ge \dist(x,y)$ which contradicts Lemma \ref{lem:DistFuncBound}.
\end{proof}

Let us generalize Definition \ref{dfn:Tilde} to the situation when $i,j \in \Sigma$ are singular points of $(\Tor,g)$ joined by a geodesic arc $\gamma$. Then $\widetilde{h_{ij}}$ is the distance-like function on $\gamma$ that takes values $h_i$ and $h_j$ at $i$ and $j$, respectively. Note that there are many arcs that join $i$ and $j$, so we need to specify $\gamma$ when we talk about $\widetilde{h_{ij}}$. If $h \in \M_{tr}(\Tor,g)$ and $\gamma$ is an edge of a triangulation associated with $h$, then $\widetilde{h_{ij}} = \widetilde{h}|_\gamma$. Also we might want to extend function $\widetilde{h_{ij}}$ beyond the point $i$. For this we consider a \emph{geodesic extension} of $\gamma$ beyond $i$. This is a geodesic ray from $i$ that forms the angle $\pi$ with $ij$. We measure the angles around $i$ modulo the cone angle $\alpha_i$ at $i$, so the geodesic extension is defined for $\alpha_i< \pi$ as well. 
In general, there are two geodesic extensions (``to the left'' and ``to the right''), and they coincide only if $\alpha_i = \frac{2\pi}{n}$ for some $n$.

\begin{lemma} \label{lem:hikhjl}
Let $i,j,k,l \in \Sigma$. Choose geodesic arcs $ik$ and $jl$ and a geodesic extension of $ik$ beyond $i$. Suppose that the extension of $ik$ intersects the arc $jl$ at a point $m$, see Figure \ref{fig:hikhjl}. Then for every $h \in \M_{tr}(\Tor, g)$ holds
\begin{equation} \label{eqn:hikhjl}
\widetilde{h_{ik}}(m) \ge \widetilde{h_{jl}}(m).
\end{equation}
In particular, let $i, j \in \Sigma$ be such that there is a closed geodesic arc based at $j$ that bounds a disk in $\Tor$ such that $i$ is the only singularity inside this disk. Then for every $h \in \M_{tr}(\Tor, g)$ holds
\begin{equation} \label{eqn:hihj}
h_i \ge h_j - \log \cosh \ell_{ij},
\end{equation}
where $\ell_{ij}$ is the length of the geodesic arc $ij$ that lies inside the disk. See Figure \ref{fig:hikhjl}.
\end{lemma}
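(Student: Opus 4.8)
\textit{Overall strategy.} Both inequalities will be deduced from Lemma~\ref{lem:BoundsForPD} by restricting the distance function $\widetilde h$ of a truncated convex cusp to suitable geodesic arcs and comparing it there with the distance-like ``interpolants'' $\widetilde{h_{ik}}$, $\widetilde{h_{jl}}$. The point is that $\widetilde h$, being $Q$-concave, lies \emph{above} its distance-like interpolant between two prescribed values and \emph{below} it outside that interval, and these two effects can be played against each other at the point $m$.

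\textit{The general inequality (\ref{eqn:hikhjl}).} First I form the geodesic arc $\eta$ obtained by concatenating the chosen geodesic extension of $ik$ beyond $i$ with the arc $ik$ itself, so that $\eta$ contains both the segment $ik$ and the point $m$, with $m$ lying outside $[i,k]$ on the $i$-side. Developing $\eta$ isometrically onto a geodesic of $\H^2$ (this is legitimate also through the cone point $i$, where $\widetilde h|_\eta$ is still piecewise distance-like and $Q$-concave), the restriction $\widetilde h|_\eta$ becomes a $Q$-concave PD function on a line taking the values $h_i$ at $i$ and $h_k$ at $k$. Since the distance function is $1$-Lipschitz (the gradient bound in the proof of Lemma~\ref{lem:DistFuncBound}), $|h_i-h_k|$ is strictly smaller than the length of the arc $ik$, so $\widetilde{h_{ik}}$ exists on $\eta$, and Lemma~\ref{lem:BoundsForPD} applied to the pair $i,k$ gives $\widetilde h(m)\le\widetilde{h_{ik}}(m)$ because $m\notin[i,k]$. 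Restricting instead $\widetilde h$ to the arc $jl$ and applying Lemma~\ref{lem:BoundsForPD} to the pair $j,l$ gives $\widetilde h(m)\ge\widetilde{h_{jl}}(m)$, since $m$ lies in the interior of $[j,l]$. Chaining the two inequalities through the common value $\widetilde h(m)$ yields $\widetilde{h_{ik}}(m)\ge\widetilde{h_{jl}}(m)$. I expect this part to be routine.

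\textit{The special case (\ref{eqn:hihj}).} I apply the general inequality with $k=j$, $l=j$, taking ``$ik$'' to be the arc $ij$ of length $\ell_{ij}$ inside the disk $\Delta$ and ``$jl$'' to be the bounding geodesic loop $\delta$. The geodesic extension of $ij$ beyond $i$ enters $\Delta$, and, since $i$ is the only singularity inside $\Delta$, it crosses $\delta$; let $m$ be the first crossing (if it runs back into $i$, perturb the point as in the proof of Lemma~\ref{lem:SlopeBound}, or pass to the other geodesic extension). Then $\widetilde{h_{ij}}(m)\ge\widetilde{h_\delta}(m)$, where $\widetilde{h_\delta}$ is the distance-like function on $\delta$ with the value $h_j$ at both ends of the loop; by symmetry it attains its minimum $h_j-\log\cosh(\ell_\delta/2)$ at the midpoint, so $\widetilde{h_\delta}(m)\ge h_j-\log\cosh(\ell_\delta/2)$. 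Cutting $\Delta$ along $ij$ and developing into $\H^2$ produces a hyperbolic (possibly degenerate) isosceles triangle with two sides of length $\ell_{ij}$, base $\delta$ of length $\ell_\delta$, and apex angle $\alpha_i$; the cosine law $\cosh\ell_\delta=\cosh^2\ell_{ij}-\sinh^2\ell_{ij}\cos\alpha_i\le\cosh(2\ell_{ij})$ gives $\ell_\delta\le 2\ell_{ij}$, hence $\log\cosh(\ell_\delta/2)\le\log\cosh\ell_{ij}$ and $\widetilde{h_{ij}}(m)\ge h_j-\log\cosh\ell_{ij}$.

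\textit{Main obstacle.} What remains is to pass from $\widetilde{h_{ij}}(m)$ to $h_i=\widetilde{h_{ij}}(i)$, i.e.\ to locate the minimum of the distance-like function $\widetilde{h_{ij}}$ along the geodesic carrying $ij$ relative to the points $i$ and $m$. If that minimum lies on the segment $ij$ (or beyond $j$), then $h_i\ge h_j-\log\cosh\ell_{ij}$ already follows from the hyperbolic trigonometry of a single distance-like function, with no appeal to $\Delta$; the genuine content is to exclude a minimum lying too far on the extension side of $i$, and this is exactly what should be extracted from $m\in\delta$ together with $\ell_\delta\le 2\ell_{ij}$ and the fact that $\widetilde h|_{ij}\ge\widetilde{h_{ij}}$ while $\widetilde h$ has slope $<1$. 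Carrying out this case analysis cleanly (and dealing with the $\alpha_i<\pi$ subtlety in the notion of geodesic extension) is the part I expect to require the most care.
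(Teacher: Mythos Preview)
Your argument for the first inequality (\ref{eqn:hikhjl}) is correct and is essentially the paper's proof: restrict $\widetilde h$ to the broken geodesic $kim$, observe that $Q$-concavity survives at the cone point $i$, and apply Lemma~\ref{lem:BoundsForPD} twice.

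For the second inequality (\ref{eqn:hihj}), however, your route genuinely stalls at the point you yourself flag. After bounding $\widetilde{h_{jj}}(m)\ge h_j-\log\cosh(\ell_\delta/2)\ge h_j-\log\cosh\ell_{ij}$ you would need $h_i\ge\widetilde{h_{ij}}(m)$, and this is simply false in the case where the minimum of $\widetilde{h_{ij}}$ lies on the $m$-side of $i$; in fact $\widetilde{h_{ij}}(m)$ can exceed $h_i$ by an arbitrary amount. None of the tools you list in the last paragraph ($\ell_\delta\le 2\ell_{ij}$, the $1$-Lipschitz bound, $\widetilde h|_{ij}\ge\widetilde{h_{ij}}$) pins down the location of $m$ on $\delta$ well enough to close this gap; the crude lower bound for $\widetilde{h_{jj}}(m)$ has already discarded that information.

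The paper avoids this case analysis altogether. It introduces the comparison function
\[
f(x)=\log\cosh\bigl(\dist(x,i)\bigr)+h_i
\]
on the disk, which is precisely the distance function to a horosphere for which $i$ is the foot of the perpendicular. When $h_j=f(j)=h_i+\log\cosh\ell_{ij}$, both $\widetilde{h_{ij}}$ and $\widetilde{h_{jj}}$ coincide with $f$ on their respective arcs, so $\widetilde{h_{ij}}(m)=\widetilde{h_{jj}}(m)$. One then checks (via Lemma~\ref{lem:SinhForm} or directly) that, with $h_i$ fixed, $\widetilde{h_{ij}}(m)$ is monotone \emph{decreasing} in $h_j$ while $\widetilde{h_{jj}}(m)$ is monotone \emph{increasing} in $h_j$. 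Hence the inequality $\widetilde{h_{ij}}(m)\ge\widetilde{h_{jj}}(m)$ from part one forces $h_j\le f(j)$, which is exactly (\ref{eqn:hihj}). This monotonicity argument replaces your unresolved case analysis; I recommend adopting it.
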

\begin{figure}[ht]
\centerline{\input{hikhjl.pstex_t}}
\caption{To Lemma \ref{lem:hikhjl}.}
\label{fig:hikhjl}
\end{figure}
\begin{proof}
Consider the restriction of the Q-concave PD function $\widetilde{h}$ to the piecewise geodesic arc $kim$. This is a PD function, and since the arc $kim$ can be approximated by geodesic arcs, it is also Q-concave. Then by Lemma \ref{lem:BoundsForPD} we have
$$
\widetilde{h}(m) \le \widetilde{h_{ik}}(m).
$$
By Lemma \ref{lem:BoundsForPD} applied to the arc $jl$, we have
$$
\widetilde{h}(m) \ge \widetilde{h_{jl}}(m).
$$
Inequality (\ref{eqn:hikhjl}) follows.

Let us derive (\ref{eqn:hihj}) from (\ref{eqn:hikhjl}). Consider the function
$$
f(x) = \log \cosh (\dist(x,i)) + h_i
$$
on the singular disk bounded by the arc $jj$. We have $h_i = f(i)$. If also $h_j = f(j)$, then we have
$$
\widetilde{h_{ij}}(m) = f(m) = \widetilde{h_{jj}}(m),
$$
where $m$ is the first intersection point of the geodesic extension of $ij$ beyond $i$ with the arc $jj$. It is easy to see that $\widetilde{h_{ij}}(m)$ is a monotone decreasing, and $\widetilde{h_{jj}}(m)$ is a monotone increasing function of $h_j$. Thus, since we have $\widetilde{h_{ij}}(m) \ge \widetilde{h_{jj}}(m)$ due to (\ref{eqn:hikhjl}), we must have $h_j \le f(j)$, and this is exactly the inequality (\ref{eqn:hihj}).
\end{proof}

\begin{lemma} \label{lem:Conv}
Let $i,j,k$ be three points on the real line such that $j$ lies between $i$ and $k$. Then the function $\widetilde{h_{ij}}(k)$ is a concave function of $h_i$, $h_j$, and the function $\widetilde{h_{ik}}(j)$ is a convex function of $h_i$, $h_k$.
\end{lemma}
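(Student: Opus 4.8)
The plan is to reduce everything to an explicit formula for the distance-like function and then to differentiate twice. Recall from Lemma \ref{lemmadistancelike} that on the line a distance-like function has the form $\widetilde{h_{ij}}(x) = \log\cosh(\dist(x,a)) + b$. Instead of working with the parameters $(a,b)$, I would use Lemma \ref{lem:SinhForm}: choosing a horodisk $B$ disjoint from the relevant points, the quantity $e^{\widetilde{h_{ij}}(x)}$ is, up to the normalization $e^{h}=e^{\dist(\cdot,B)}$, an \emph{affine} function of $e^{h_i}$ and $e^{h_j}$ with coefficients depending only on the (fixed) distances along the line. Concretely, writing $u_i = e^{h_i}$, $u_j = e^{h_j}$, $u_k = e^{h_k}$, and $\lambda = \dist(i,j)$, $\mu = \dist(j,k)$ with $j$ between $i$ and $k$, formula (\ref{eqn:SinhForm}) gives for the interpolation $\widetilde{h_{ik}}$ on the segment $[i,k]$
\[
e^{\widetilde{h_{ik}}(j)} = \frac{\sinh\mu}{\sinh(\lambda+\mu)}\, e^{h_i} + \frac{\sinh\lambda}{\sinh(\lambda+\mu)}\, e^{h_k},
\]
and similarly, extrapolating $\widetilde{h_{ij}}$ past $j$ to the point $k$ (now $j$ is between $i$ and $k$, so apply (\ref{eqn:SinhForm}) solving for the outer value),
\[
e^{\widetilde{h_{ij}}(k)} = \frac{\sinh(\lambda+\mu)}{\sinh\lambda}\, e^{h_j} - \frac{\sinh\mu}{\sinh\lambda}\, e^{h_i}.
\]
So in the exponential coordinates $v = e^{\widetilde{h}(\cdot)}$ both functions are affine in $(e^{h_i}, e^{h_j})$ resp. $(e^{h_i}, e^{h_k})$, with fixed coefficients; note the signs of the two coefficients in each formula.

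Now the lemma follows from the fact that $\widetilde{h} = \log v$ and a monotonicity/convexity comparison under the change of variable $t\mapsto e^t$. For $\widetilde{h_{ik}}(j)$ as a function of $(h_i,h_k)$: it equals $\log$ of a positive-coefficient affine combination of $e^{h_i}$ and $e^{h_k}$. Since $e^t$ is convex and increasing, $c_1 e^{h_i} + c_2 e^{h_k}$ with $c_1,c_2>0$ is a convex function of $(h_i,h_k)$; but we need $\log$ of it. Here I would instead argue directly: $\log(c_1 e^{x} + c_2 e^{y})$ is convex in $(x,y)$ — this is the standard log-sum-exp convexity, which one checks by computing the Hessian and seeing it is positive semidefinite (it is a rank-one nonnegative matrix). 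That gives convexity of $\widetilde{h_{ik}}(j)$ in $(h_i,h_k)$. For $\widetilde{h_{ij}}(k)$ as a function of $(h_i,h_j)$: here $e^{\widetilde{h_{ij}}(k)} = a\, e^{h_j} - b\, e^{h_i}$ with $a,b>0$ (and this is positive on the relevant domain, since the extrapolated function must stay real-valued, i.e. $|h_i-h_j|<\lambda$). Then $\widetilde{h_{ij}}(k) = \log(a e^{h_j} - b e^{h_i})$, and I claim $\log(a e^{y} - b e^{x})$ is concave in $(x,y)$ on the region where it is defined: again compute the Hessian directly. Writing $w = a e^y - b e^x$, one gets
\[
\frac{\partial^2}{\partial x^2}\log w = \frac{-bwe^x - b^2 e^{2x}}{w^2} = \frac{-abe^{x+y}}{w^2},\quad
\frac{\partial^2}{\partial y^2}\log w = \frac{-abe^{x+y}}{w^2},\quad
\frac{\partial^2}{\partial x\partial y}\log w = \frac{abe^{x+y}}{w^2},
\]
so the Hessian is $\dfrac{-abe^{x+y}}{w^2}\begin{pmatrix}1 & -1\\ -1 & 1\end{pmatrix}$, which is negative semidefinite since $ab>0$ and $w^2>0$. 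Hence $\widetilde{h_{ij}}(k)$ is concave in $(h_i,h_j)$.

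The main obstacle is bookkeeping rather than depth: one must make sure the coefficients coming out of (\ref{eqn:SinhForm}) have the claimed signs in each of the two cases (interpolation between $i,k$ versus extrapolation of the segment $ij$ past $j$), and that in the extrapolation case the affine combination $a e^{h_j} - b e^{h_i}$ is genuinely positive on the domain of definition of $\widetilde{h_{ij}}$ — this is exactly the condition $|h_i - h_j| < \dist(i,j)$ noted after Definition \ref{dfn:Tilde}, which guarantees $\widetilde{h_{ij}}$ exists and hence that its value at $k$ is a well-defined real number, i.e. $w>0$. Once the signs are pinned down, the two Hessian computations above (each a rank-one matrix with a definite sign) finish the proof. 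Alternatively, and perhaps more cleanly for the write-up, one can avoid the Hessian of $\log$-sum-exp by invoking the known convexity of $(x,y)\mapsto\log(e^x+e^y)$ and affine precomposition, but the explicit rank-one Hessian is short enough to include directly.
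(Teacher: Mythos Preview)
Your proof is correct and follows essentially the same approach as the paper: both use Lemma~\ref{lem:SinhForm} to write $\widetilde{h_{ik}}(j)=\log(a e^{h_i}+b e^{h_k})$ with $a,b>0$ and $\widetilde{h_{ij}}(k)=\log(a' e^{h_j}-b' e^{h_i})$ with $a',b'>0$, and then check the sign of the Hessian. You simply carry out the Hessian computation explicitly (obtaining the rank-one matrices) where the paper leaves it to the reader.
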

\begin{proof}
By Lemma \ref{lem:SinhForm},
$$
\widetilde{h_{ik}}(j) = \log (a e^{h_i} + b e^{h_k})
$$
with positive $a$ and $b$. The Hessian can easily be computed and shown to be positive semidefinite. For $\widetilde{h_{ij}}(k)$ one has a similar expression with one positive and one negative coefficient.
\end{proof}

\subsection{Description of $\M(\Tor, g)$} \label{subsec:MTg}
Here we prove the main result of this section. Recall that the space $\M_{tr}(\Tor, g)$ is identified with a subset of $\R^\Sigma$ by associating to a truncated cusp the truncated particle lengths $h = (h_i)_{i \in \Sigma}$. The space $\M(\Tor, g)$ is thus identified with a subset of $\R^\Sigma / \la \mathbb{1} \ra$.
\begin{proposition} \label{prp:SpaceOfCusps}
The space $\M(\Tor, g)$ of convex polyhedral cusps with particles with boundary $(\Tor,g)$ has the following properties:
\begin{enumerate}
\item It is a non-empty compact convex subset of $\R^\Sigma / \la \mathbb{1} \ra$ with non-empty interior.
\item If all faces of a convex cusp $M \in \M(\Tor, g)$ are strictly convex hyperbolic polygons (after developing on $\H^2$), then $M$ is an interior point of $\M(\Tor, g)$.
\item For a geodesic triangulation $T$ of $(\Tor,g)$, let $\M^T(\Tor,g) \subset \M(\Tor,g)$ be the space of cusps whose faces are unions of triangles from $T$. Then the decomposition
\begin{equation} \label{eqn:Chambers}
\M(\Tor, g) = \bigcup_T \M^T(\Tor, g)
\end{equation}
is finite, and every $\M^T(\Tor, g)$ is a compact set with piecewise analytic boundary.
\end{enumerate}
\end{proposition}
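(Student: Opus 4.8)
\textbf{Proof proposal for Proposition \ref{prp:SpaceOfCusps}.}

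The plan is to establish the three items essentially in the order (3), then (1), then (2), since the finiteness in (3) underpins the compactness argument, and since the interior characterization in (2) feeds the "non-empty interior" half of (1). For item (3), I would first argue that the decomposition (\ref{eqn:Chambers}) is finite: by Lemma \ref{lem:TriangFin} only finitely many geodesic triangulations $T$ of $(\Tor,g)$ can be associated with a cusp with boundary $(\Tor,g)$, so only finitely many sets $\M^T(\Tor,g)$ are non-empty. For a fixed $T$, a point of $\M_{tr}^T(\Tor,g)$ is a tuple $h=(h_i)_{i\in\Sigma}$ for which the PD extension $\widetilde{h_T}$ exists and is $Q$-concave. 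Existence of $\widetilde{h_T}$ on each triangle $ijk$ of $T$ is the condition that $\widetilde{h_{ij}}$, $\widetilde{h_{jk}}$, $\widetilde{h_{ki}}$ exist, i.e. $|h_i-h_j|<\ell_{ij}$ etc. (open, analytic inequalities), together with the condition that the three distance-like functions on the edges are simultaneously interpolated by a distance-like function on the triangle — which is itself an analytic (in)equality in the $h_i$, since a distance-like function on a hyperbolic triangle has one extra degree of freedom over its three vertex values, used up by matching one edge, and then matching the remaining vertex is an algebraic condition. $Q$-concavity along each interior edge $e=ij$ shared by triangles $ijk$ and $ij\ell$ is the inequality that the sum of the two "outward" slopes (normal derivatives of the two distance-like pieces across $e$) is $\le 0$ at every point of $e$; using the cosine law (Lemma \ref{horosphere lemma}) this is again expressible as finitely many closed analytic inequalities in $h$. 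Hence $\M_{tr}^T$, and therefore $\M^T=\M_{tr}^T/\la\mathbb{1}\ra$, is cut out of $\R^\Sigma$ (resp.\ $\R^\Sigma/\la\mathbb{1}\ra$) by finitely many closed analytic inequalities, so it is closed with piecewise analytic boundary; boundedness follows from Lemma \ref{lem:DistFuncBound} (the diameter bound on $\widetilde{h}$) after quotienting by $\la\mathbb{1}\ra$, giving compactness.

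For item (1), non-emptiness: the \emph{isosceles} cusp with $[h]=[0,\dots,0]$ always exists — take the radial extension $\widetilde{h}(x)=\log\cosh(\dist(x,\Sigma))$ (more precisely, glue the semi-ideal pyramids that are cones over the Delaunay triangulation of $(\Tor,g)$ with all apex heights equal); one checks this is a positive $Q$-concave PD function, hence the distance function of a genuine truncated convex cusp, so $\M(\Tor,g)\neq\emptyset$. Compactness: $\M(\Tor,g)=\bigcup_T\M^T(\Tor,g)$ is a finite union of compact sets by (3), hence compact. Convexity: this is the crux. I would show that $\M_{tr}(\Tor,g)$, as a subset of $\R^\Sigma$, is convex, by showing that if $h^0$ and $h^1$ are both truncated-particle-length tuples of convex cusps then so is $h^t=(1-t)h^0+th^1$. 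The natural device is the distance functions $\widetilde{h^0}$ and $\widetilde{h^1}$: one wants to see that $h^t$ extends to a $Q$-concave PD function. Here Lemma \ref{lem:Conv} is exactly the tool — along any geodesic arc, interpolating a distance-like value at an interior point is a \emph{convex} function of the two endpoint heights, while extrapolating beyond an endpoint is \emph{concave}; combined with Lemma \ref{lem:BoundsForPD} and Lemma \ref{lem:hikhjl}, this lets one compare $\widetilde{h^t}$ on overlapping triangulations and verify $Q$-concavity is preserved under convex combination. Concretely: take a common refinement $\widehat{T}$ of the triangulations of $h^0$ and $h^1$ (adding the finitely many extra intersection vertices), show the PD extension of $h^t$ with respect to $\widehat T$ exists using the inequality chains of Lemmas \ref{lem:BoundsForPD}–\ref{lem:hikhjl} to control its values at the auxiliary vertices, and then check the $Q$-concavity inequality edge-by-edge using the convexity/concavity statements of Lemma \ref{lem:Conv}. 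Non-empty interior then follows from item (2): the isosceles cusp can be perturbed so that all its faces become strictly convex triangles (generic position), and (2) says such a cusp is interior.

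For item (2), suppose $M\in\M(\Tor,g)$ has all faces strictly convex hyperbolic polygons. Triangulate each face arbitrarily; the resulting triangulation $T$ has the property that the distance function $\widetilde{h}$ of $M$ is distance-like on each \emph{face}, i.e.\ the $Q$-concavity inequality across every edge interior to a face is an \emph{equality} with a strict inequality to spare only across the genuine edges of $M$. Strict convexity of the faces means that the genuine edges of $M$ have dihedral angle strictly less than $\pi$, i.e.\ the $Q$-concavity inequalities across them are strict, and the distance-like-extension equalities on faces hold robustly. One then argues that all the defining inequalities of $\M^{T'}(\Tor,g)$ for triangulations $T'$ refining the face structure of $M$ are satisfied with strict inequality at $h$ wherever they are not forced equalities, and that the forced equalities define a set containing a full neighbourhood of $[h]$ in $\R^\Sigma/\la\mathbb{1}\ra$ because the face-interior edge equalities are automatically satisfied by \emph{any} $h'$ close to $h$ (the PD extension on a triangulated strictly convex polygon deforms smoothly). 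Hence a neighbourhood of $[h]$ lies in $\M(\Tor,g)$, so $M$ is interior.

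\textbf{Main obstacle.} The delicate point is the convexity in item (1): showing that the set of $Q$-concave PD extensions is convex, because a convex combination $h^t$ need not be extendable over either of the triangulations of $h^0$, $h^1$ — one must pass to a refinement and control the extension's values at the new vertices. Lemma \ref{lem:Conv} gives the right one-dimensional convexity statements, but patching them into a global $Q$-concavity statement, while keeping track of which slopes are "inward" versus "outward" across each edge of the refined triangulation, is where the real work lies.
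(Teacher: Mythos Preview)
Your overall architecture (prove (3) first for compactness of the pieces, then assemble (1), with (2) feeding the interior claim) matches the paper's, and the easy parts---finiteness via Lemma \ref{lem:TriangFin}, non-emptiness via the isosceles Delaunay cusp, compactness as a finite union---are essentially the same. But the two hard steps, Property (2) and the convexity half of Property (1), are where your proposal diverges from the paper and where it has a genuine gap.

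The paper's device for both is a \emph{flip algorithm}. For (2): given $h'$ near $h$, start from a triangulation $T$ refining the face structure of $M$ and repeatedly flip any edge across which $\widetilde{h'_T}$ fails to be $Q$-concave (a ``bad'' edge). For small $\|h'-h\|$ the true edges of $M$ never go bad, so all flipping happens inside the faces; strict convexity of the faces rules out the two configurations in which a bad edge cannot be flipped, and the algorithm terminates because each flip pointwise increases $\widetilde{h'_{T^n}}$ while a convex polygon has only finitely many triangulations. This constructively produces, for every nearby $h'$, a triangulation $T'$ with $\widetilde{h'_{T'}}$ $Q$-concave. Your argument that ``the defining inequalities are strict, so a neighbourhood lies in $\M$'' does not address the fact that the triangulation must change: the flat-edge equalities of $T$ become genuine (possibly wrong-signed) inequalities for $h'$, and you need a mechanism to pass to a new $T'$. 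The flip algorithm is exactly that mechanism.

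For convexity the paper again uses flips. After reducing (via closedness of $\M$) to small $\lambda$, one runs the flip algorithm on $h^\lambda$ starting from a triangulation of $h^0$. The crux is to show that a bad edge can always be flipped: in each of the two ``no-flip'' configurations one applies Lemma \ref{lem:hikhjl} to \emph{both} $h^0$ and $h^1$ and then uses the convexity/concavity of Lemma \ref{lem:Conv} to interpolate, obtaining the reverse of the ``bad'' inequality for $h^\lambda$---a contradiction. Your proposed route via a common refinement $\widehat T$ of the two triangulations has a structural problem: $\widehat T$ has vertices at edge-crossings that are \emph{not} in $\Sigma$, so a PD function subordinate to $\widehat T$ is not of the form required by the definition of $\M_{tr}(\Tor,g)$ (whose triangulations have vertex set exactly $\Sigma$). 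Even if you assign values at the auxiliary vertices and build a $Q$-concave PD function there, you have not produced a convex cusp with particles; you would still need to show the kinks at the auxiliary vertices are absent, which brings you back to finding a bona fide triangulation on $\Sigma$.

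Two minor points. A distance-like function on $\H^2$ has three parameters (two for the point $a$, one for $b$), so three vertex values determine it with no leftover degree of freedom; your ``one extra degree of freedom'' remark is off. And for compactness of $\M^T$, note that the existence condition for $\widetilde{h_T}$ is \emph{open}, so $\M_{tr}^T$ is not literally cut out by closed inequalities; the paper handles this by bounding slopes via Lemma \ref{lem:SlopeBound}, which confines everything to a compact set of semi-ideal pyramids on which the $Q$-concavity conditions $\theta_e\le\pi$ are then closed.
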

\begin{proof} $\M(\Tor,g) \ne \emptyset:$ We claim that $[0,\ldots,0] \in \M(\Tor, g)$, that is there exists an isosceles cusp with particles with boundary $(\Tor,g)$. Furthermore, faces of this cusp are the faces of the Delaunay tesselation of $(\Tor,g)$, where the Delaunay tesselation of a surface with a cone metric is defined as the dual of the Voronoi tesselation, see \cite[Proposition 3.1]{Thurart1}. To show this, let $T_D$ be a Delaunay triangulation, that is a refinement of the Delaunay tesselation of $(\Tor,g)$. Inscribe a triangle $ijk$ of $T_D$ in a horosphere with center $c$. This gives an isosceles semi-ideal pyramid $cijk$. Let $ijl$ be a triangle of $T_D$ adjacent to $ijk$. Develop $ijl$ into the hyperbolic plane spanned by $ijk$. By the main property of Delaunay tesselations, the point $l$ lies outside or on the circumcircle of $ijk$. Hence, $l$ lies outside or on the horosphere through $i,j$ and $k$. It follows that the truncated length of $cl$ is larger than the truncated length of $ci, cj$ and $ck$. In order to make the pyramid $cijl$ isosceles, one has to rotate the triangle $ijl$ around the edge $ij$ towards $c$. As a result, the total dihedral angle of the pyramids $cijk$ and $cijl$ at the edge $ij$ becomes $\le \pi$.

Property (3): The finiteness of the decomposition (\ref{eqn:Chambers}) is proved in Lemma \ref{lem:TriangFin}. A point $[h] \in \R^\Sigma / \la \mathbb{1} \ra$ lies on the boundary of $\M^T(\Tor, g)$ iff the function $\widetilde{h}$ is distance-like across some of the edges of $T$. Clearly, for every edge this condition is analytic. It remains to show that $\M^T(\Tor,g)$ is compact. Embed a triangle $ijk \in \T$ into $\H^3$. The hyperbolic plane spanned by $ijk$ divides the sphere at infinity in two open hemispheres. The space of semi-ideal pyramids with the base $ijk$ is naturally homeomorphic to any one of these hemispheres. For a point $c \in \partial \overline{\H^3}$ not coplanar with $ijk$, call the \emph{slope} ot the pyramid $cijk$ the maximum distance between the projection of $c$ on the plane spanned by $ijk$ and a point of the triangle $ijk$. Clearly, the slope of $cijk$ is the maximum slope of the corresponding distance function on $ijk$, see Definition \ref{dfn:Slope}. For any $D \in \R$, the space of semi-ideal pyramids with slope $\le D$ is compact. By Lemma \ref{lem:SlopeBound}, there exists $D$ such that all pyramids in convex cusps with boundary $(\Tor, g)$ have slopes $\le D$. The conditions $\theta_{ij} \le \pi$ for all $ij \in T$ are closed ones. Thus $\M^T(\Tor,g)$ is compact as a closed subset of a compact space.

Property (2): Let $(T,h)$ be a pair associated with a truncation of $M$. There are two types of edges of $T$: \emph{true edges} that are edges of the cusp $M$ and \emph{flat edges} that were added to refine the face decomposition to a triangulation. 
We want to show that there exists an $\epsilon > 0$ such that $h' \in \M_{tr}(\Tor, g)$ for all $h'$ in $\R^\Sigma$ at a distance $<\epsilon$ from $h$. That is, for every such $h'$ we want to find a triangulation $T'$ such that the function $\widetilde{h'_{T'}}$ exists and is Q-concave.

We obtain a triangulation $T'$ from the triangulation $T$ by the \emph{flip algorithm}. Let $ij$ be an edge of $T$ such that $\widetilde{h'_T}$ is not Q-concave across $ij$. We call such an edge a \emph{bad edge} of $T$. If $ij$ belongs to two different triangles $ijk$ and $ijl$ of $T$ and the quadrilateral $ikjl$ is strictly convex, then the edge $ij$ can be \emph{flipped}, that is replaced by the diagonal $kl$. The flip algorithm produces a sequence of triangulations $T^0 = T, T^1, T^2,\ldots$, where $T^{n+1}$ is obtained from $T^n$ by flipping a bad edge of $T^n$. If the algorithm terminates at a triangulation $T'$ that has no bad edges, then we are done. But some things might go wrong. First, the PD function $\widetilde{h'_{T^n}}$ might not exist for some $n$. Second, it might be impossible to flip a bad edge, see Figure \ref{fig:NoFlip}. And third, the algorithm might run infinitely. Let us show that none of these occurs in our particular case.

\begin{figure}[ht]
\centerline{\input{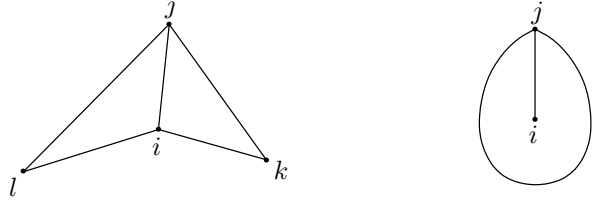}}
\caption{The two situations where the edge $ij$ cannot be flipped.}
\label{fig:NoFlip}
\end{figure}

Clearly, the function $\widetilde{h'_T}$ exists, if $\epsilon$ is sufficiently small. It is also easy to see that, for small $\epsilon$, a true edge of $T$ can never become bad and thus will never be flipped. Thus the flip algorithm is performed independently inside every face. This implies that every triangle of $T^n$ is contained in a face of $M$, and thus the function $\widetilde{h'_{T^n}}$ exists, for small $\epsilon$. Since every face is a convex polygon, situations on Figure \ref{fig:NoFlip} cannot occur, and a bad edge can always be flipped. The algorithm is finite since the function $\widetilde{h'_{T^{n+1}}}$ is pointwise greater or equal than $\widetilde{h'_{T^n}}$ and since every face has finitely many triangulations.

Property (1): The space $\M(\Tor,g)$ is compact since it is the union of finitely many compact spaces, by Property (3). The Delaunay cusp constructed at the beginning of the proof has convex faces and thus is an interior point of $\M(\Tor,g)$ due to Property (2). It remains to prove the convexity of $\M(\Tor,g)$. We will show that $\M_{tr}(\Tor, g)$ is convex, that is for every $h^0, h^1 \in \M_{tr}(\Tor, g)$ and every $0<\lambda<1$ the point
$$
h^\lambda := (1-\lambda)h^0 + \lambda h^1
$$
also belongs to $\M_{tr}(\Tor, g)$. Then $\M(\Tor,g)$ is convex as a projection of $\M_{tr}(\Tor,g)$.

Since $\M(\Tor, g)$ is closed, it suffices to prove $h^\lambda \in \M_{tr}(\Tor,g)$ for all sufficiently small $\lambda$. Our proof of it uses the flip algorithm described in the proof of Property~(2).

Let $T$ be a triangulation associated with $h^0$. If $\lambda$ is sufficiently small, then the function $\widetilde{h^\lambda_T}$ exists. The true edges of $T$ will never be flipped, thus the flip algorithm is performed independently inside every face of $\widetilde{h^0}$. It follows that for every triangulation $T^n$ that appears during the algorithm, the function $\widetilde{h^\lambda_{T^n}}$ exists. Every face can be triangulated in only finitely many ways, since otherwise infinitely many triangulations would be associated with $h^0$, which contradicts Lemma \ref{lem:TriangFin}. Hence the algorithm cannot run infinitely. It remains to show that a bad edge can always be flipped.

Assume that $ij$ is a bad edge in the situation on the left of Figure \ref{fig:NoFlip}. Badness means
\begin{equation} \label{eqn:BadEdge}
\widetilde{h^\lambda_{ik}}(m) < \widetilde{h^\lambda_{jl}}(m),
\end{equation}
where $m$ is the intersection point of the lines $ik$ and $jl$. On the other hand, by Lemma \ref{lem:hikhjl} for both $h^0$ and $h^1$ the opposite inequality (\ref{eqn:hikhjl}) holds. By Lemma \ref{lem:Conv}, $\widetilde{h_{ik}}(m)$ is a concave and $\widetilde{h_{jl}}(m)$ is a convex function on $h_i$ and $h_k$. This implies that the inequality (\ref{eqn:BadEdge}) is false and $ij$ is not bad. If $ij$ cannot be flipped because of the situation on the right of Figure \ref{fig:NoFlip}, then the same argument applies since the inequality (\ref{eqn:hihj}) is a convex condition on $h_i$ and $h_j$.
\end{proof}

\emph{Remarks.} A similar analysis of the space of ``generalized'' convex polytopes and ``generalized'' convex caps was made in \cite{BobenkoIzmestiev} and \cite{Izmestiev}. In the former case the distance function was the (square of) the distance from a point to a plane in $\R^3$, in the latter case --- the distance from a variable point in a plane to another plane in $\R^3$. Thus, instead of $\log\cosh$, the distance-like functions in \cite{BobenkoIzmestiev} and \cite{Izmestiev} were modelled on $x^2$ and on $ax$ with a real parameter $a \in [-1,1]$, respectively. In both cases one succeeded to describe the space of ``generalized'' objects explicitely by linear and quadratic inequalities on coordinates. The arguments from \cite{BobenkoIzmestiev} and \cite{Izmestiev} cannot be carried out in the present paper because the function $\widetilde{h_{ij}}$ from Definition \ref{dfn:Tilde} does not always exist.

It is now easy to prove Theorem \ref{Base theorem} in the special case when the metric $g$ has only one singularity. Indeed, the isosceles cusp over the Delaunay tesselation of $(\Tor,g)$ is convex, see the first paragraph of the proof of Proposition \ref{prp:SpaceOfCusps}. The curvature $\kappa$ of its only particle vanishes due to $\sum_i \kappa_i = 0$. Thus it is a convex polyhedral cusp. This cusp is unique, since the space $\M(\Tor,g)$ consists of a single point. In the case of one singularity the results of the subsequent sections have no real meaning. If there are two singularities, then the space $\M(\Tor,g)$ is a segment in $\R$, there can be different triangulations $T$ such that $\M^T(\Tor,g)$ is non-empty, and the things become more interesting.

\section{The total scalar curvature}\label{sec:scalar curvature}
In this section we define the total scalar curvature function $S$ on the space $\M(\Tor,g)$, compute its derivatives and show that $S$ is strictly concave.

\subsection{Derivatives of the total scalar curvature}
\begin{definition}
Let $M \in \M(\Tor,g)$ be a convex polyhedral cusp with particles. The \emph{total scalar curvature} of $M$ is defined as
$$
S(M) = -2\mathrm{Vol}(M) + \sum_{i \in \Sigma} h_i \kappa_i + \sum_{e \in \mathcal{E}(M)} \ell_e(\pi - \theta_e).
$$
Here $(h_i)_{i\in \Sigma}$ are the truncated particle lengths for an arbitrary truncation of~$M$, $\ell_e$ is the length of the edge $e$, and $\theta_e$ is the dihedral angle of $M$ at $e$. The second sum ranges over the set $\E(M)$  of edges of $M$. 
\end{definition}
Choosing a different truncation of $M$ does not change the values of $\kappa_i, \ell_e$ and $\theta_e$. The truncated particle lengths $h_i$ are all changed by the same amount. Thus, by Lemma \ref{Lem:sumcurvaturearezero}, the value $S(M)$ is well-defined.

\begin{lemma}
The function $S$ is twice continuously differentiable on $\M(\Tor,g)$. Its first partial derivatives are:
\begin{equation} \label{eqn:dSdh}
\frac{\partial S}{\partial h_i}  =  \kappa_i.
\end{equation}
The second partial derivatives of $S$ have the properties:
\begin{eqnarray}
\frac{\partial^2 S}{\partial h_i \partial h_j}\!\!\!\! && \!\!\left\{ \begin{array}{ll}
							>0 & \mbox{if }ij \in \E(M),\\ \label{eqn:dSdhij}
							=0 & \mbox{otherwise,}
                                                       \end{array} \right.\\ \label{eqn:dSdhii}
\frac{\partial^2 S}{\partial h_i^2} & = & - \sum_{j\ne i} \frac{\partial^2 S}{\partial h_i \partial h_j}.
\end{eqnarray}
\end{lemma}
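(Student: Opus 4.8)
The plan is to prove the differentiability of $S$ and establish the formulas \eqref{eqn:dSdh}--\eqref{eqn:dSdhii} in three stages: first a Schläfli-type computation of the first derivatives, then a direct geometric computation of the second derivatives, and finally the linear identity \eqref{eqn:dSdhii} as a consequence of the scaling behaviour of the construction.

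\textbf{First derivatives.} I would work with a fixed triangulation $T$, so that on $\M^T(\Tor,g)$ the cusp is glued from semi-ideal pyramids over triangles of $T$, and differentiate inside the interior of $\M^T(\Tor,g)$; since $S$ is manifestly continuous and the pieces $\M^T$ cover $\M(\Tor,g)$ (Proposition \ref{prp:SpaceOfCusps}(3)), matching of one-sided derivatives across the analytic walls gives global $C^1$. The key input is the Schläfli formula for a (truncated) cone-manifold: for a hyperbolic polyhedron, $d\Vol = -\tfrac12\sum_e \ell_e\, d\theta_e$, suitably extended to the cusp setting where truncated particles of length $h_i$ replace the ideal edges and contribute terms $-\tfrac12 h_i\, d\omega_i$. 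Truncating $M$ and applying Schläfli to the resulting compact polyhedron (the horoprism complex), one gets
$$
d\Vol(M) = -\frac12 \sum_{e\in\E(M)} \ell_e\, d\theta_e - \frac12 \sum_{i\in\Sigma} h_i\, d\omega_i + (\text{terms from the Euclidean truncation face}),
$$
and the truncation-face terms vanish because that face is flat (zero dihedral contribution from the horospherical pieces, or rather they cancel by the compatibility of the truncation). Also the lengths $\ell_e$ of true edges vary, but the faces are totally geodesic, so $\sum_e (\pi-\theta_e)\, d\ell_e$ is compensated — this is exactly the reason the combination $-2\Vol + \sum h_i\kappa_i + \sum\ell_e(\pi-\theta_e)$ is chosen. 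Differentiating $S$ and substituting the Schläfli identity, all terms involving $d\theta_e$, $d\ell_e$ and the interior geometry cancel in pairs, and using $\kappa_i = 2\pi-\omega_i$ one is left with $dS = \sum_i \kappa_i\, dh_i$, i.e. \eqref{eqn:dSdh}. I also need the flat-edge contributions (edges of $T$ interior to a face) to drop out: there $\theta_e = \pi$ so $\ell_e(\pi-\theta_e)=0$ and $d\theta_e$ enters the volume term but is killed because the two adjacent coplanar pyramids make that a fake edge — this is a standard bookkeeping point.

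\textbf{Second derivatives.} From \eqref{eqn:dSdh} we get $\partial^2 S/\partial h_i\partial h_j = \partial\kappa_i/\partial h_j = -\partial\omega_i/\partial h_j$, so the whole content is computing how the total angle $\omega_i$ around the $i$-th particle depends on the truncated lengths $h_j$. The angle $\omega_i$ is the sum, over triangles $ijk$ of $T$ incident to $i$, of the dihedral angle of the semi-ideal pyramid $c\,ijk$ along its lateral edge at $i$; by Lemma \ref{horosphere lemma} this dihedral angle is an explicit function of $h_i, h_j, h_k$ and the side lengths of the triangle $ijk$ (which are fixed, determined by $g$). So $\partial\omega_i/\partial h_j$ gets a contribution only from those triangles containing both $i$ and $j$, i.e.\ from the edge $ij$ if it is present in $T$; if there is no edge $ij$ in $M$ then $\partial^2 S/\partial h_i\partial h_j = 0$, giving the second line of \eqref{eqn:dSdhij}. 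For an actual edge $ij$, one differentiates the cosine-law expression \eqref{eqn:cos} for the relevant planar angle $\rho$ (the angle at $i$ in the base triangle seen from the horocyclic direction) and checks the sign: increasing $h_j$ with $h_i$ fixed makes the pyramid ``lean away'' and decreases $\omega_i$, so $\partial\kappa_i/\partial h_j = -\partial\omega_i/\partial h_j > 0$. Concretely, from \eqref{eqn:cos}, $\cos\rho_i = (\cosh\lambda - e^{h_j-h_i})/\sinh\lambda$ is strictly decreasing in $h_j$, hence $\rho_i$ is strictly increasing in $h_j$; summing the two such angles from the two pyramids adjacent to $ij$ shows the contribution to $\omega_i$ is strictly decreasing in $h_j$, which is \eqref{eqn:dSdhij}, $>0$ case. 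One must also confirm the result is independent of the choice of triangulation refining the face decomposition — this follows since $\omega_i$ itself is intrinsic.

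\textbf{The identity \eqref{eqn:dSdhii}.} This expresses that adding the \emph{same} constant $c$ to all $h_i$ does not change the cusp $M$ at all — it is merely a change of truncation (Definition \ref{dfn:Th}). Hence $\kappa_i$ is invariant under $h\mapsto h+c\mathbb{1}$, so $\sum_j \partial\kappa_i/\partial h_j = 0$, which rearranges to \eqref{eqn:dSdhii}. Equivalently, $S$ descends to $\R^\Sigma/\la\mathbb 1\ra$, so the vector $\mathbb 1$ is in the kernel of the Hessian, giving the row-sum-zero condition. Finally, $C^2$-smoothness: away from the walls the angle functions are real-analytic in $h$ by the explicit trigonometric formulas, and across a wall (where $\widetilde h$ becomes distance-like across some edge of $T$, i.e.\ two pyramids merge) the one-sided second derivatives match because the limiting configuration is geometrically the same flattened cusp; I expect the smoothest way to see this is to note that the functions $\kappa_i$ are defined intrinsically and depend continuously on $h$, and their partial derivatives, computed on each chamber $\M^T$, extend continuously to the common boundary.

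\textbf{Main obstacle.} The genuinely delicate step is the Schläfli bookkeeping for the first derivative: making precise the Schläfli formula for a truncated cusp with particles (including that the horospherical truncation faces and the flat edges of the auxiliary triangulation contribute nothing, and that the variation of the true edge-lengths $\ell_e$ cancels against the $\sum\ell_e(\pi-\theta_e)$ term). Once $dS = \sum\kappa_i\,dh_i$ is in hand, everything else is a short explicit trigonometric computation plus the scaling remark.
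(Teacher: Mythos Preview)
Your approach is essentially the paper's: Schl\"afli for \eqref{eqn:dSdh}, explicit differentiation of the dihedral angles in each semi-ideal pyramid for \eqref{eqn:dSdhij}, and the translation invariance in $\la\mathbb 1\ra$ (equivalently $\sum_j\kappa_j=0$) for \eqref{eqn:dSdhii}. The first and third parts are fine; your version of \eqref{eqn:dSdhii} via invariance under change of truncation is actually a hair cleaner than the paper's column-sum argument.

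The gap is in your sign argument for \eqref{eqn:dSdhij}. You correctly observe from \eqref{eqn:cos} that the planar angle $\rho_{ij}$ (in the lateral face through $i$ and $j$) is strictly increasing in $h_j$. But $\rho_{ij}$ is not the dihedral angle at the $i$-th particle; the dihedral contribution $\omega_{ijk}$ of a single pyramid is the spherical angle at the ``particle'' vertex in the link of $i$, and one needs spherical trigonometry to relate it to $\rho_{ij}$. The paper carries this out and finds
\[
\frac{\partial \omega_{ijk}}{\partial \rho_{ij}}=-\frac{\cot\alpha_{ij}}{\sin\rho_{ij}},
\]
where $\alpha_{ij}$ is the dihedral angle of the pyramid along $ij$. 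This derivative can have \emph{either} sign in a single pyramid (according as $\alpha_{ij}\lessgtr\pi/2$), so your per-pyramid ``leaning away'' intuition does not give monotonicity. Only after summing over the two pyramids sharing the edge $ij$ does one get the factor $\cot\alpha_{ij}+\cot\alpha_{ji}$, whose positivity is \emph{equivalent} to the convexity condition $\theta_{ij}=\alpha_{ij}+\alpha_{ji}<\pi$. This is precisely where the hypothesis ``$ij\in\E(M)$'' enters, and the same computation gives the $=0$ case for flat edges ($\theta_{ij}=\pi$) without appealing to triangulation-independence. You should insert this link computation; the rest stands.
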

Recall that $\M(\Tor, g) \subset \R^\Sigma / \la \mathbb{1} \ra$. So, by $\frac{\partial}{\partial h_i}$ we mean the directional derivative in the direction of $[e_i] = e_i + \la \mathbb{1} \ra$, where $e_i$ is the $i$th basis vector of $\R^\Sigma$.
\begin{proof}
Let $T$ be a triangulation of $(\Tor,g)$ such that $M \in \M^T(\Tor,g)$. Cut $M$ into semi-ideal triangular pyramids according to the triangulation $T$. If $M$ is an interior point of $\M^T(\Tor,g)$, then the triangulation $T$ is associated to all cusps near $M$. By a generalization of the Schl\"afli formula, \cite[page 294]{Mil94}, \cite[Theorem 14.5]{Riv94}, we have
$$
d\Vol(M) = \frac{1}{2} \sum_i h_i d\kappa_i -\frac{1}{2} \sum_e \ell_e d\theta_e.
$$
This implies (\ref{eqn:dSdh}) in the case when $M$ is an interior point of some $\M^T(\Tor,g)$. In the general case, let $\xi \in \R^\Sigma$ be such that $[h+t\xi] \in \M(\Tor,g)$ for all sufficiently small $t$. Due to the piecewise analyticity of the boundaries of $\M^T(\Tor, g)$, there exists a triangulation $T$ such that $[h+t\xi] \in \M^T(\Tor,g)$ for all sufficiently small $t$. Therefore the previous argument shows that
$$
\frac{\partial S}{\partial \xi} = \sum_{i \in \Sigma} \xi_i \kappa_i.
$$

The same argument can be applied to show that $S \in C^2(\M(\Tor,g))$. Thus we can concentrate on computing the derivatives $\frac{\partial \kappa_i}{\partial h_j}$ for $M$ in the interior of some $\M^T(\Tor,g)$. This is reduced to computing the derivatives in a single semi-ideal pyramid. Introduce notations for the angles as on Figure \ref{prisms}.

\begin{figure}[ht] \begin{center}
\input{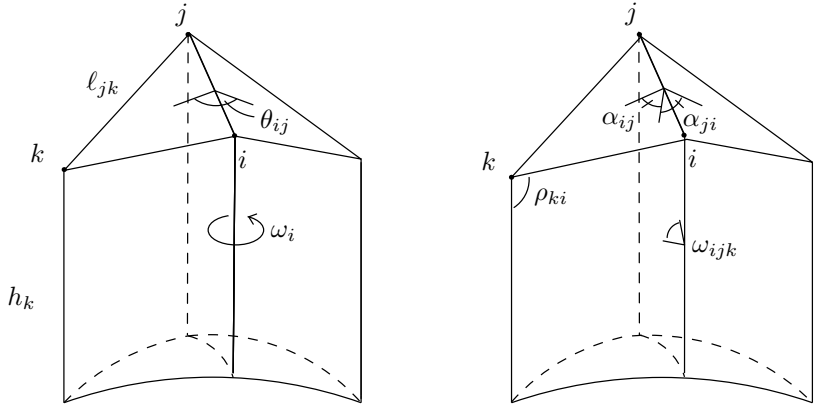}
\end{center}
\caption{Angles and lengths in two adjacent horoprisms (truncated semi-ideal pyramids). The curved triangles are the Euclidean bases of the horoprisms. \label{prisms}}
\end{figure}

The angle $\omega_{ijk}$ can be viewed as a function of the angles $\rho_{ij}$ and $\rho_{ik}$. Thus we have
\begin{equation} \label{eqn:dodh}
\frac{\partial \omega_{ijk}}{\partial h_j} =  \frac{\partial \omega_{ijk}}{\partial \rho_{ij}} \frac{\partial \rho_{ij}}{\partial h_j}.
\end{equation}
From (\ref{eqn:cos}) we compute
$$
\frac{\partial \rho_{ij}}{\partial h_j} = \frac{e^{h_j-h_i}}{\sinh\ell_{ij} \sin\rho_{ij}}.
$$
From the link of the vertex $i$ in the pyramid over $ijk$ with the help of spherical sine and cosine laws we compute
$$
\frac{\partial \omega_{ijk}}{\partial \rho_{ij}} = - \frac{\cot\alpha_{ij}}{\sin\rho_{ij}}.
$$
Substituting in (\ref{eqn:dodh}), we obtain
$$
\frac{\partial \omega_{ijk}}{\partial h_j} = - \frac{e^{h_j-h_i} \cot\alpha_{ij}}{\sinh\ell_{ij} \sin^2\rho_{ij}}
$$
and
\begin{equation} \label{eqn:dkdh}
\frac{\partial\kappa_i}{\partial h_j} = - \frac{\partial \omega_i}{\partial h_j} = \frac{e^{h_j-h_i} (\cot\alpha_{ij} + \cot\alpha_{ji})}{\sinh\ell_{ij} \sin^2\rho_{ij}}.
\end{equation}
Note that $\cot\alpha_{ij} + \cot\alpha_{ji} > 0$ if $\alpha_{ij} + \alpha_{ji} < \pi$, and vanishes if $\alpha_{ij} + \alpha_{ji} = \pi$. Since $\alpha_{ij} + \alpha_{ji} = \theta_{ij}$, this implies (\ref{eqn:dSdhij}).

The equation (\ref{eqn:dSdhii}) is equivalent to
$$
\sum_j \frac{\partial \kappa_j}{\partial h_i} = 0,
$$
which is true due to $\sum_j \kappa_j = 0$.
\end{proof}

\noindent\emph{Remarks}. Note that the equation (\ref{eqn:dkdh}) holds only if there is a unique edge between the vertices $i$ and $j$. In the case of multiple edges, one has to sum up the right hand side of (\ref{eqn:dkdh}) over all edges between $i$ and $j$.

In general, $S$ is not of class $C^3$. For example, if $\theta_{ij} = \pi$ and $\theta_{ik} \not= \pi$ on Figure~\ref{prisms}, then
$$
\frac{\partial}{\partial h_k} \frac{\partial\kappa_i}{\partial h_j} = 0, \mbox{ but } \frac{\partial}{\partial h_j} \frac{\partial\kappa_i}{\partial h_k} \ne 0.
$$

\subsection{Concavity of the total scalar curvature}
Let $\Gamma(M)$ be an embedded graph in $(\Tor,g)$ with vertex set $\Sigma$ and edge set $\E(M)$ that consists of the edges of the convex polyhedral cusp with particles $M$.
\begin{lemma} \label{lem:HessKernel}
The Hessian of the function $S$ is negatively semidefinite. The nullspace of $(\frac{\partial^2 S}{\partial h_i \partial h_j})$ is spanned by the vectors $v^K = [h_i^K]_{i \in \Sigma }$ defined as
$$
h_i^K = \left\{ \begin{array}{ll}
		1, & \mbox{for }i \in K;\\
		0, & \mbox{for }i \notin K,
                \end{array}
\right.
$$
where $K \subset \Sigma $ is a connected component of $\Gamma(M)$.
\end{lemma}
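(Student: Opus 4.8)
The plan is to observe that, up to sign, the Hessian matrix $H = \left(\frac{\partial^2 S}{\partial h_i \partial h_j}\right)_{i,j\in\Sigma}$ is a weighted Laplacian of the graph $\Gamma(M)$, and then to invoke the elementary linear algebra of such matrices. Concretely, I would set $w_{ij} := \frac{\partial^2 S}{\partial h_i \partial h_j}$ for $i \ne j$ (summing the positive contributions of all edges joining $i$ and $j$ when there are several, as in the remark after the previous lemma); by that lemma $w_{ij} > 0$ when $ij \in \E(M)$ and $w_{ij} = 0$ otherwise. Using $w_{ij} = w_{ji}$ together with the identity $\frac{\partial^2 S}{\partial h_i^2} = -\sum_{j\ne i} w_{ij}$, a short computation rewrites the quadratic form as
\begin{equation*}
\sum_{i,j\in\Sigma} \frac{\partial^2 S}{\partial h_i \partial h_j}\, x_i x_j \;=\; -\sum_{ij\in\E(M)} w_{ij}\,(x_i - x_j)^2
\end{equation*}
for every $x = (x_i)_{i\in\Sigma} \in \R^\Sigma$. (Loop edges $ii$, which may occur in $\Gamma(M)$, do not enter this sum and play no role.)

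From here the two assertions of the lemma would follow at once. Negative semidefiniteness is immediate, since each $w_{ij}$ with $ij\in\E(M)$ is strictly positive. For the kernel I would note that the quadratic form vanishes at $x$ exactly when $x_i = x_j$ along every edge of $\Gamma(M)$, i.e. exactly when $x$ is constant on every connected component of $\Gamma(M)$, which is precisely the span of the indicator vectors $v^K$; since $H$ is symmetric and semidefinite, its kernel equals the zero locus of the quadratic form, and the $v^K$ are linearly independent because they have pairwise disjoint supports. Two technical points I would address in passing: the second-derivative formulas were established only on the open chambers $\inn\M^T(\Tor,g)$, but $S\in C^2(\M(\Tor,g))$ by the previous lemma, so the displayed identity, and hence the conclusion, extends to all of $\M(\Tor,g)$ by continuity; and, since the ambient space is really $\R^\Sigma/\la\mathbb{1}\ra$ and $\mathbb{1} = \sum_K v^K$, the Hessian of $S$ there is non-degenerate iff $\Gamma(M)$ is connected, and in general its kernel has dimension one less than the number of connected components.

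There is no genuine obstacle: the statement is the classical fact that a weighted graph Laplacian with positive edge weights is positive semidefinite with kernel spanned by the indicators of connected components, transplanted into our setting. The only thing demanding a little attention is the bookkeeping with multiple and loop edges of $\Gamma(M)$, together with the observation that the rewriting of the quadratic form above uses nothing beyond the two structural properties of the second derivatives recorded in the previous lemma (the sign/support of the off-diagonal entries and the vanishing row sums).
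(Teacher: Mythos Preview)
Your proposal is correct and follows essentially the same approach as the paper: rewrite the quadratic form as $-\sum a_{ij}(x_i-x_j)^2$ using the vanishing row sums, then read off semidefiniteness and the kernel from the positivity of the off-diagonal entries on edges. Your additional remarks on loops, multiple edges, continuity to all of $\M(\Tor,g)$, and the quotient by $\la\mathbb{1}\ra$ are useful elaborations but do not alter the argument.
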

\begin{proof}
Denote $a_{ij} = \frac{\partial^2 S}{\partial h_i \partial h_j}$. Due to (\ref{eqn:dSdhii}), we have
$$
\sum_{i,j}a_{ij}x_ix_j = -\sum_{i<j}a_{ij}(x_i-x_j)^2.
$$
Since $a_{ij} \ge 0$ by (\ref{eqn:dSdhij}), the Hessian is negatively semidefinite. By (\ref{eqn:dSdhij}) again, the Hessian vanishes on the vector $x$ if and only if $x_i$ is constant over every connected component of $\Gamma(M)$.
\end{proof}
As a consequence, $S$ is a concave function on $\M(\Tor,g)$. Points where the Hessian is degenerate can exist indeed, as the example in Subsection \ref{subsec:CuspsParticles} shows. However, in an important special case we can show that the Hessian is non-degenerate.

\begin{lemma}\label{lem:HessCusp}
If $M \in \M(\Tor,g)$ is a convex polyhedral cusp, then the Hessian of $S$ is not degenerate at $M$.
\end{lemma}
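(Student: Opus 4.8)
The plan is to reduce the non-degeneracy of the Hessian to a combinatorial fact about the graph $\Gamma(M)$, and then to establish that fact using the topology of an honest (particle-free) cusp.

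First I would invoke Lemma~\ref{lem:HessKernel}, which already describes the kernel of the quadratic form $\bigl(\frac{\partial^2 S}{\partial h_i\partial h_j}\bigr)$ on $\R^\Sigma$ as the span of the indicator vectors $v^K$, one for each connected component $K$ of $\Gamma(M)$. Since $S$ is really a function on the quotient $\R^\Sigma/\langle\mathbb{1}\rangle$ and $\mathbb{1}=\sum_K v^K$ (with the $v^K$ linearly independent), the induced form on the quotient is non-degenerate precisely when the $v^K$ span only $\langle\mathbb{1}\rangle$, i.e. precisely when $\Gamma(M)$ has a single connected component. So the lemma comes down to the assertion: \emph{the graph $\Gamma(M)$ of a convex polyhedral cusp is connected.} (Here one uses that for a genuine cusp all $\kappa_i=0$, so the singular locus is empty and $M$ is a hyperbolic cusp with convex polyhedral boundary in the sense of Definition~\ref{def:Cusp}.)

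Second, for this connectedness I would argue topologically. By the corollary to Lemma~\ref{lem:CuspToPol}, every face of $M$ is a convex hyperbolic polygon, hence a closed topological disk attached along a circle; together with the vertices and edges this makes $\Tor$ into a finite CW complex whose $1$-skeleton is exactly $\Gamma(M)$, which is non-empty since a hyperbolic cone metric on the torus necessarily has cone points. If $\Gamma(M)$ split into components $C_1,\dots,C_r$ with $r\ge 2$, then each face, having connected boundary circle, would attach to a single $C_t$; grouping each $C_t$ with the open faces attached to it gives subsets $X_1,\dots,X_r$ partitioning $\Tor$, each of which is closed (the closure of an open edge of $C_t$ adds its endpoints, and the closure of a face attached to $C_t$ adds its boundary circle, all staying inside the respective piece) and hence, since there are finitely many faces, also open. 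This contradicts connectedness of $\Tor$, so $\Gamma(M)$ is connected and the Hessian is non-degenerate at $M$.

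I expect no serious obstacle here; the one point deserving care is that the argument crucially uses that \emph{all faces are disks}. This is exactly what can fail for cusps \emph{with} particles — recall the punctured-square face in the example of Subsection~\ref{subsec:CuspsParticles} — and it is precisely the mechanism by which the Hessian becomes degenerate in that case. So the ``hard part'', such as it is, is simply to make sure one is genuinely in the particle-free situation where the corollary to Lemma~\ref{lem:CuspToPol} applies; the rest is routine point-set topology of CW complexes.
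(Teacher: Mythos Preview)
Your argument is correct and follows the same overall strategy as the paper: both reduce the statement, via Lemma~\ref{lem:HessKernel}, to the connectedness of $\Gamma(M)$. The difference lies in how that connectedness is established. The paper lifts $M$ to the convex parabolic polyhedron $P\subset\H^3$ via Lemma~\ref{lem:CuspToPol} and then appeals to the general fact that the $1$-skeleton of a convex polyhedron is connected; this is a one-line geometric argument. You instead stay on the torus and use the corollary that each face is a convex polygon, hence a disk with connected boundary, to run a CW-complex partition argument. Your route is a bit longer but more self-contained, and it makes explicit \emph{why} the particle-free hypothesis matters (faces are disks), which ties in nicely with the example in Subsection~\ref{subsec:CuspsParticles}; the paper's route is quicker but imports a standard fact about convex polyhedra.
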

\begin{proof}
The developing map maps $M$ to a convex parabolic polyhedron, see Lemma \ref{lem:CuspToPol}. Since the 1-skeleton of a convex polyhedron is connected, the graph $\Gamma(M)$ is also connected. By Lemma \ref{lem:HessKernel}, the nullspace of the Hessian is spanned by the vector $v^\Sigma$ which projects to zero in $\R^\Sigma / \la \mathbb{1} \ra$.
\end{proof}
Note that convex polyhedral cusps correspond to critical points of $S$ due to (\ref{eqn:dSdh}).
\begin{corollary} \label{cor:Smax}
If $M \in \M(\Tor,g)$ is such that $\kappa_i(M) = 0$ for all $i \in \Sigma$, then
$$
S(M) > S(M')
$$
for every $M' \in \M(\Tor,g)$ different from $M$.
\end{corollary}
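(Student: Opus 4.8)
The plan is to obtain the corollary from Lemma \ref{lem:HessCusp} and the concavity of $S$, via the standard principle that a critical point of a concave function at which the Hessian is non-degenerate is the unique global maximum. The first, and only genuinely geometric, step is a reduction: the hypothesis $\kappa_i(M)=0$ for all $i\in\Sigma$ means, by the definition $\kappa_i=2\pi-\omega_i$, that the total dihedral angle $\omega_i$ around each particle equals $2\pi$, so the manifold $M$ is non-singular along every particle. Hence the singular locus of $M$ is empty and $M$ is a genuine convex polyhedral cusp in the sense of Definition \ref{def:Cusp}. Lemma \ref{lem:HessCusp} then applies: the graph $\Gamma(M)$ is connected (it is the $1$-skeleton of a convex parabolic polyhedron), so by Lemma \ref{lem:HessKernel} the Hessian $\big(\frac{\partial^2 S}{\partial h_i\partial h_j}(M)\big)$ is negative \emph{definite} on $\R^\Sigma/\langle\mathbb{1}\rangle$. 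Moreover, by (\ref{eqn:dSdh}) the gradient of $S$ at $M$ is $(\kappa_i(M))_{i\in\Sigma}=0$, a well-defined covector on $\R^\Sigma/\langle\mathbb{1}\rangle$ since $\sum_i\kappa_i=0$ (Lemma \ref{Lem:sumcurvaturearezero}); thus $M$ is a critical point of $S$.

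Next I would fix $M'\in\M(\Tor,g)$ with $M'\neq M$ and restrict $S$ to the segment joining them. By Proposition \ref{prp:SpaceOfCusps}(1) the set $\M(\Tor,g)$ is convex, so $M_t:=(1-t)M+tM'$ lies in $\M(\Tor,g)$ for all $t\in[0,1]$; put $\phi(t):=S(M_t)$. Since $S$ is $C^2$ and concave on $\M(\Tor,g)$ (the remark after Lemma \ref{lem:HessKernel}), $\phi$ is a concave $C^2$ function on $[0,1]$. Writing $v:=M'-M\neq 0$ in $\R^\Sigma/\langle\mathbb{1}\rangle$, we have $\phi'(0)=\sum_i\kappa_i(M)\,v_i=0$ and $\phi''(0)=\sum_{i,j}\frac{\partial^2 S}{\partial h_i\partial h_j}(M)\,v_iv_j<0$ by negative definiteness of the Hessian at $M$.

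Finally I would upgrade this to the strict inequality $\phi(1)<\phi(0)$. From $\phi'(0)=0$ and $\phi''(0)<0$ one gets $\phi'(t_0)<0$ for some $t_0\in(0,1]$ (indeed $\phi'(t_0)=t_0\phi''(0)+o(t_0)$); since $\phi$ is concave, $\phi'$ is non-increasing, so $\phi'(t)\le\phi'(t_0)<0$ for all $t\in[t_0,1]$. Hence $\phi(1)<\phi(t_0)\le\phi(0)$, i.e. $S(M')<S(M)$, which is the assertion. Everything here is soft convex analysis once the Hessian is known to be negative definite at $M$; I do not expect any real obstacle, the only point requiring attention being the reduction in the first paragraph that vanishing particle curvatures genuinely removes all particles so that Lemma \ref{lem:HessCusp} is applicable.
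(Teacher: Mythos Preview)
Your proof is correct and follows exactly the approach the paper intends: the corollary is stated without proof immediately after Lemma \ref{lem:HessCusp}, and is meant to follow from the negative definiteness of the Hessian at $M$ together with the concavity of $S$ (Lemma \ref{lem:HessKernel}) and the convexity of the domain (Proposition \ref{prp:SpaceOfCusps}), precisely as you argue. Your one-variable reduction via $\phi(t)=S((1-t)M+tM')$ makes the implicit step explicit, and the reduction ``$\kappa_i=0$ for all $i$ $\Rightarrow$ $M$ is a genuine convex polyhedral cusp'' is indeed what licenses the use of Lemma \ref{lem:HessCusp}.
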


Although the Hessian of $S$ can degenerate at some points in $\M(\Tor, g)$, the following lemma holds.

\begin{lemma} \label{lem:SConc}
The function $S$ is strictly concave on $\M(\Tor,g)$, that is
$$
S[th + (1-t)h'] > tS[h] + (1-t)S[h']
$$
for all $[h] \ne [h']$ and all $t \in (0,1)$.
\end{lemma}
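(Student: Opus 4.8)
The plan is to argue by contradiction, using that $S$ is already concave: by Lemma~\ref{lem:HessKernel} its Hessian is negatively semidefinite, so $S$ is concave on the convex set $\M(\Tor,g)$ and its restriction to any segment is a concave function of one variable. If $S$ were not strictly concave, there would be a nondegenerate segment $t\mapsto[h^t]=[h^0]+t\,([h^1]-[h^0])$, $t\in[0,1]$, $[h^0]\ne[h^1]$, along which $S$ is affine. Setting $\xi=h^1-h^0$, the identity $\frac{d^2}{dt^2}S(M_t)=\xi^{\top}\,\mathrm{Hess}_{M_t}(S)\,\xi\equiv 0$ together with negative semidefiniteness forces $\xi\in\ker\mathrm{Hess}_{M_t}(S)$ for every $t$. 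By Proposition~\ref{prp:SpaceOfCusps}(3) the segment meets only finitely many pieces $\M^T(\Tor,g)$, and inside each piece the edge set of $M_t$ jumps only across the analytic walls $\{\theta_e=\pi\}$; so I may pass to a nondegenerate subsegment on which the edge graph $\Gamma(M_t)\equiv\Gamma$ is a fixed graph. On it Lemma~\ref{lem:HessKernel} shows that $\xi$ is constant on each connected component of $\Gamma$, and since $\xi\notin\langle\mathbb{1}\rangle$ the graph $\Gamma$ must be disconnected and $\xi$ must take at least two distinct values, say $c_1$ on a component $K_1$ and $c_2\ne c_1$ on a component $K_2$, which (the component–face incidence graph being connected) may be chosen to lie on the closure of a common face $F$.

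Next I would establish a rigidity property of non-simply-connected faces. A face whose closure meets two different components of $\Gamma$ cannot be a disk, since the frontier of a disk face is a single circle of edges along which consecutive edges share a vertex and hence lie in one component; so $F$ is non-simply-connected, its frontier (together with interior punctures at isolated vertices) having a ``$K_1$-part'' and a ``$K_2$-part''. Now the truncated distance function $\widetilde{h^t}$ restricted to $F$ is distance-like (Lemma~\ref{lemmadistancelike}): developing the universal cover of $F$ into $\H^2$ it becomes $x\mapsto\log\cosh\dist(x,a)+b$ for a single point $a\in\H^2$, and, the developing map being equivariant and $\widetilde{h^t}$ being invariant under the deck group, the holonomy group $\rho(\pi_1 F)$ must fix $a$. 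This group is nontrivial (otherwise $F$ would embed isometrically into $\H^2$ and be a disk), hence it fixes a \emph{unique} point $a_F$; therefore $\widetilde{h^t}|_{\overline F}=\log\cosh\dist(\cdot,a_F)+b_F(t)$, where along the subsegment the only degree of freedom is the constant $b_F(t)$, because the metric of $F$ and the position of $a_F$ inside it depend only on $g$ and on $\Gamma$.

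To finish, pick a vertex $v$ of $\overline F$ in $K_1$ and a vertex $w$ of $\overline F$ in $K_2$ (a puncture counts as such a vertex). The rigid form of $\widetilde{h^t}|_{\overline F}$ gives
$$
\widetilde{h^t}(v)-\widetilde{h^t}(w)=\log\cosh\dist(v,a_F)-\log\cosh\dist(w,a_F),
$$
which is independent of $t$; but $\widetilde{h^t}(v)=h^0_v+tc_1$ and $\widetilde{h^t}(w)=h^0_w+tc_2$, so the left-hand side equals $(h^0_v-h^0_w)+t(c_1-c_2)$, which is non-constant in $t$ since $c_1\ne c_2$. This contradiction shows $S$ is strictly concave.

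The step I expect to be the crux is the rigidity in the second paragraph: one must justify carefully that $\widetilde{h^t}$ on a non-simply-connected face really is distance-like about a single fixed center --- equivalently, that a face of a cusp with particles can have neither parabolic nor hyperbolic holonomy --- and then identify $a_F$ (it should be the point of the plane carrying the face that is nearest the cusp point, as in Lemma~\ref{lemmadistancelike}). The reductions in the first paragraph are routine once one invokes the piecewise analyticity from Proposition~\ref{prp:SpaceOfCusps}.
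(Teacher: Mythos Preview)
Your approach is essentially the paper's: assume $S$ is affine on a segment, pass to a subsegment lying in one chamber $\M^T(\Tor,g)$, use Lemma~\ref{lem:HessKernel} to see that $\xi=h^1-h^0$ is constant on each connected component of the edge graph, and then argue via a non-simply-connected complementary region $F$ that two distance-like functions on $F$ must differ by a constant, forcing $[\xi]=0$. The paper differs only cosmetically: instead of freezing $\Gamma(M_t)$ along a subsegment it works with the union $\Gamma[h]\cup\Gamma[h']$ and compares $\widetilde{h}$ and $\widetilde{h'}$ directly on each non-simply-connected complementary region.

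There is, however, one genuinely wrong step in your write-up. You assert that the holonomy group $\rho(\pi_1F)$ is nontrivial because ``otherwise $F$ would embed isometrically into $\H^2$ and be a disk.'' Neither implication holds: trivial holonomy only gives a local isometry (an immersion) $F\to\H^2$, not an embedding; and even an embedding would not force $F$ to be simply connected --- an annulus sits happily inside $\H^2$. So your argument for uniqueness of the fixed point $a_F$ breaks precisely in the case where the holonomy is trivial, which you have not excluded. Ruling out parabolic and hyperbolic holonomy (your stated ``crux'') is indeed easy, since a distance-like function forces every holonomy element to fix $a$; the delicate case is the elliptic-versus-trivial dichotomy, and your parenthetical does not settle it. The paper's proof glosses over exactly this point with ``one easily sees,'' so you have located the genuine content of the lemma correctly --- you just have not supplied a valid argument for it, and the one you wrote down is false.
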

\begin{proof}
Assume this is not the case. Then there exist $[h] \ne [h']$ such that $S$ is linear on the segment joining $[h]$ and $[h']$. By choosing a subsegment, if necessary, we can assume $[h], [h'] \in \M^T(\Tor,g)$ for some triangulation $T$. This implies that the graph $\Gamma = \Gamma[h] \cup \Gamma[h']$ is embedded in $(\Tor, g)$.

Due to the linearity of $S$ between $[h]$ and $[h']$, the vector $[h'-h]$ belongs to the nullspace of the Hessian at both $[h]$ and $[h']$. Lemma \ref{lem:HessKernel} implies that the coordinate difference $h'_i - h_i$ is constant over every connected component of the graph $\Gamma$. Thus if $\Gamma$ is connected, then we have $[h] = [h']$, which is a contradiction.

Let $\Gamma$ be disconnected. Then its complement contains a non-simply connected component $F \subset (\Tor,g)$. Both functions $\widetilde{h}$ and $\widetilde{h'}$ are distance-like functions on $F$ (see Subsection \ref{subsec:HeightsToCusp}). By developing $F$ on a hyperbolic plane in $\H^3$, one easily sees that any two distance-like functions of $F$ differ by a constant. This implies that the coordinate difference $h'_i - h_i$ is constant over all vertices of $F$. Since this is true for every non-simply connected face of $\Gamma$, the difference $h'_i - h_i$ is constant over the whole $\Sigma$, and we have $[h] = [h']$, which is a contradiction.
\end{proof}

\section{Proofs of main theorems}\label{sec:proofs}

\subsection{Proof of Theorem \ref{Base theorem}.}
\emph{Existence}. Let $M$ be a convex polyhedral cusp with particles and with boundary $(\Tor,g)$. We will show that if some of its singular curvatures is not zero, then there exists a cusp with a larger total scalar curvature:
\begin{equation} \label{eqn:MaxScal}
\exists i \in \Sigma : \kappa_i(M) \ne 0 \quad \Longrightarrow \quad \exists M' \in \M(\Tor,g) : S(M') > S(M).
\end{equation}
Since the space $\M(\Tor,g)$ is non-empty and compact by Proposition \ref{prp:SpaceOfCusps}, the function $S$ attains its maximum at some point $M \in \M(\Tor,g)$. All of the curvatures of the cusp $M$ must vanish due to (\ref{eqn:MaxScal}). Thus (\ref{eqn:MaxScal}) implies the existence part of Theorem~\ref{Base theorem}.

The proof of (\ref{eqn:MaxScal}) is based on
\begin{lemma}[Volkov] \label{lem:Volkov}
Let $M$ be a convex polyhedral cusp with particles, and let $\kappa_i(M) < 0$ for an $i \in \Sigma$. Then every face containing the vertex $i$ has angle $<\pi$ at~$i$.
\end{lemma}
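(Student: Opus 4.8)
The plan is to argue by contradiction, exploiting the convexity hypothesis together with the sign of the curvature $\kappa_i(M) = 2\pi - \omega_i < 0$. Suppose that some face $F$ containing $i$ has angle $\ge \pi$ at $i$. The statement concerns only the geometry near the vertex $i$, so I would work in the link of the particle at $i$: truncate the cusp by a horosphere and look at the flat cone metric induced on the surface of truncation in a small neighborhood of the point where the $i$th particle meets it. Equivalently, I would develop the semi-ideal pyramids incident to the particle into $\H^3$ around the particle line, reading off the picture from the Euclidean bases of the horoprisms as in Figure \ref{prisms}.

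The key quantitative input is the relation between the angles $\rho_{ij}$ (the angle at $i$ between the edge $ij$ and the particle, i.e. the perpendicular to the Euclidean base) and the dihedral angles. Convexity gives $\theta_e \le \pi$ at every boundary edge, and $\theta_{ij} = \alpha_{ij} + \alpha_{ji}$ where the $\alpha$'s are the angles of the Euclidean faces of the horoprisms. The total angle $\omega_i$ around the particle is the sum of the dihedral angles of the pyramids along the particle; it is also computable in the spherical link of $i$, whose edges have lengths $\rho_{ij}$ and whose angles are the $\alpha$'s. The heart of the matter is: if one Euclidean face at $i$ is "flat or reflex" (angle $\ge \pi$ at $i$), then combining this with $\theta_e \le \pi$ at the two boundary edges of that face forces the angular contributions along the particle to add up to at least $2\pi$, i.e. $\omega_i \ge 2\pi$, contradicting $\omega_i > 2\pi$... wait, that is consistent, so the inequality must be made strict — I would show $\omega_i \ge 2\pi$ with a strengthening that actually gives $\omega_i \ge$ (angle of $F$ at $i$) $\ge \pi$ plus the remaining dihedral contributions summing to $\ge \pi$, forcing $\kappa_i \le 0$ to be impossible to have the face reflex unless $\omega_i$ is too large. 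Concretely: the face $F$ with angle $\ge \pi$ at $i$, developed, separates the remaining incident pyramids into a configuration where a Gauss–Bonnet / developing-map argument on the horospherical link shows the turning around $i$ is $\le \omega_i - 2\pi = -\kappa_i$; if the face angle is $\ge \pi$ this turning must be $\ge 0$... The cleanest route is to use the spherical link: the link of $i$ in $M$ is a spherical cone surface, a union of spherical triangles with apex-angles summing to $\omega_i$; a face $F$ with angle $\ge\pi$ at $i$ developed in $\H^2$ together with $\theta \le \pi$ constrains the link so that $\omega_i \ge 2\pi$, and the case of equality is excluded by the strictness coming from $F$ being a genuine (non-degenerate) face, hence $\omega_i > 2\pi$, i.e. $\kappa_i < 0$ is consistent — so I instead need the reverse: assume $\kappa_i < 0$ and show no incident face can have angle $\ge \pi$, which by the same link analysis says $\omega_i > 2\pi$ forces every face angle at $i$ to be $< \pi$.

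Here is the argument I actually expect to run. Develop into $\H^2$ the chain of Euclidean triangular bases of the horoprisms that wind around the particle at $i$ (the point $i$ is the common vertex); their apex angles at $i$ are the $\rho$-angles? — no, they are the $\alpha$'s summing to $\omega_i$ only if one unfolds. Better: each dihedral angle of a pyramid along the particle equals the apex angle of the corresponding Euclidean triangle at $i$; summing, the apex angles of these Euclidean triangles total $\omega_i$. Now the union of these Euclidean triangles, glued along the edges emanating from $i$, is a flat cone of total angle $\omega_i > 2\pi$ at $i$. Its boundary consists of the edges $\{ik\}$ lying in boundary faces; within each boundary face $F$, the edges of this cone make an angle equal to the face angle of $F$ at $i$. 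Convexity $\theta_e \le \pi$ means that as one crosses a boundary edge $e=ij$ from one face to the next, the development does not turn back past flatness; this is precisely the Q-concavity of the distance function restricted to the link. A flat cone of angle $>2\pi$ with the property that at no boundary edge does it fail Q-concavity, and with one boundary sector of angle $\ge\pi$, can be shown (by direct planar development: such a sector together with the mandatory $\le\pi$ turns at its two bounding edges already accounts for $\ge 2\pi$ plus positive contributions from the other triangles) to have total angle $>2\pi$ strictly in a way that is fine — the contradiction comes from comparing with $\omega_i$ from the other side. Honestly, the cleanest statement: if face $F$ has angle $\ge\pi$ at $i$, develop $F$ and the two incident pyramids; convexity forces the two other faces bounding those pyramids to ``fold back'', and continuing around one finds $\omega_i < 2\pi$, i.e. $\kappa_i > 0$, contradicting $\kappa_i < 0$.

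\textbf{Main obstacle.} The real difficulty is bookkeeping the development around the particle correctly: one must track the cyclic sequence of Euclidean face-sectors and dihedral angles, use $\theta_e \le \pi$ at each of the intervening boundary edges, and extract from ``one sector has angle $\ge\pi$'' the conclusion $\omega_i < 2\pi$ — this is a planar convexity estimate that is intuitively clear (a convex polygonal picture cannot wrap around more than once) but requires care because faces may be non-simply-connected and edges may be identified (loops), so the link is a genuine cone metric on an interval/circle rather than a tidy fan of triangles. I expect the author to handle this by passing to the horospherical cross-section, where the link becomes a convex (because of $\theta_e\le\pi$) development in the Euclidean plane, and then the statement is the elementary fact that a convex development whose total turning exceeds $2\pi$ (which is what $\kappa_i < 0$ says) cannot contain a straight or reflex vertex. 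That elementary planar fact, correctly set up, is the crux.
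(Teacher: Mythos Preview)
Your proposal does not converge to an argument, and the confusion is structural rather than cosmetic: you are mixing up two different links and, as a consequence, the wrong quantities end up being compared.

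The correct object is the \emph{spherical} link of the boundary vertex $i$ in $M$: intersect $M$ with a small sphere centered at $i$. This is a spherical disk with one interior cone point (coming from the particle) of cone angle $\omega_i$, bounded by a spherical polygon. In this polygon:
\begin{itemize}
\item the \emph{interior angles} at the vertices are the dihedral angles $\theta_e$, so convexity of $M$ makes the polygon convex;
\item the \emph{edge lengths} are the face angles at $i$ (this is what the lemma is about);
\item the \emph{perimeter} is the boundary cone angle at $i$, hence $<2\pi$ because $(\Tor,g)$ has positive singular curvatures;
\item the interior cone point has angle $\omega_i>2\pi$, i.e.\ negative curvature, since $\kappa_i<0$.
\end{itemize}
Thus the lemma becomes: a convex spherical polygon of perimeter $<2\pi$ with one interior cone singularity of negative curvature has no side of length $\ge\pi$. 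This is exactly what the paper does, citing Volkov for this purely spherical statement.

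Your write-up instead develops the \emph{Euclidean} horospherical cross-section around the particle, where the total apex angle is again $\omega_i$, but there the ``face angle at $i$'' does not appear as a vertex angle of anything; it lives at the other end of the particle. So the sentence ``a convex development whose total turning exceeds $2\pi$ \ldots\ cannot contain a straight or reflex vertex'' compares the wrong angles. Moreover, you never invoke the perimeter bound $<2\pi$ (equivalently, the positive boundary curvature at $i$); without it the claim is false --- a convex spherical bigon with two sides of length $\pi$ and an interior negative-curvature cone point exists as soon as you allow perimeter $\ge 2\pi$. That hypothesis is essential and is the piece your argument is missing.
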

\begin{proof}
Consider the link of the vertex $i$ in $M$. This is a convex spherical polygon of perimeter $<2\pi$ with a cone singularity of negative curvature. The lemma says that its boundary cannot contain a geodesic of length $\ge \pi$. This is proved by Volkov in \cite{Vol55}, see also \cite[Lemma 9]{Izmestiev}.
\end{proof}

Let us prove (\ref{eqn:MaxScal}). Since $\sum_i \kappa_i = 0$, we may assume $\kappa_i < 0$. Due to (\ref{eqn:dSdh}), it suffices to show that the particle length $h_i$ can be decreased; in other words, that there exists a cusp $M' \in \M(\Tor,g)$ with truncated particle lengths
$$
\begin{array}{rcl}
h'_i & = & h_i - \epsilon, \\
h'_j & = & h_j \quad \mbox{for every } j \ne i,
\end{array}
$$
for a sufficiently small $\epsilon$. To prove the existence of $M'$, we have to find a polyhedral decomposition $\F'$ of $(\Tor,g)$ such that the horoprisms over the faces of $\F'$ with heights $h'$ exist and their total dihedral angles at the edges of $\F'$ are $\le \pi$.

Consider the face decomposition of $\partial M$. When we decrease $h_i$, nothing happens to the horoprisms over the faces that don't contain $i$. Let $F$ be a face of $M$ that contains $i$. By Lemma \ref{lem:Volkov}, the angle of $F$ at $i$ is $<\pi$. Let $j, k \in \Sigma$ be the vertices of $F$ adjacent to $i$. Draw in $F$ the shortest path $\gamma$ joining $j$ with $k$ and homotopic to the path $jik$, see Figure \ref{fig:Deformation}. Together $\gamma$ and $jik$ bound a polygon $P$ that has only three angles $< \pi$, namely those at $i,j$ and $k$. It is not hard to see that subdividing the polygon $P$ by diagonals from $i$ yields a desired decomposition of $(\Tor,g)$, Figure~\ref{fig:Deformation}.

\begin{figure}[ht] \begin{center}
\input{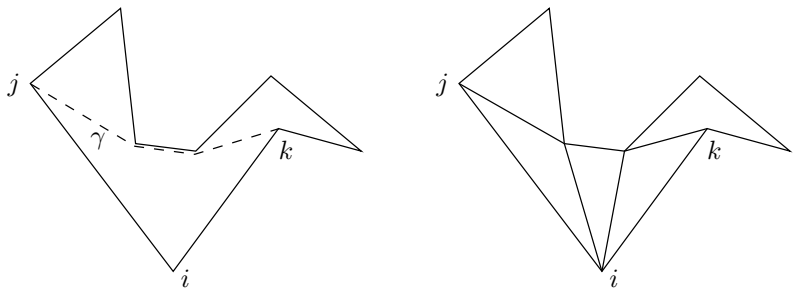}
\end{center}
\caption{Subdivision of the face $F$ when the height $h_i$ decreases. \label{fig:Deformation}}
\end{figure}

\emph{Uniqueness}. By Corollary \ref{cor:Smax}, if $M$ and $M'$ are two convex polyhedral cusps with boundary $(\Tor,g)$, then $S(M) > S(M')$ and $S(M') > S(M)$, which is a contradiction.

\subsection{Proof of Theorem \ref{cusps w particles determined by curvatures}.}
Theorem \ref{cusps w particles determined by curvatures} follows from Lemma \ref{lem:SConc} and the following proposition, see \cite[Lemma 6.1]{Luo06}, \cite[Proposition 5]{Izmestiev}.
\begin{lemma} \label{prp:ConvHomeo}
Let $f \in C^1(X)$ be a strictly convex or strictly concave function on a compact convex subset $X$ of a vector space $V$. Then the map $\grad f: X \to V^*$ is a homeomorphism onto the image.
\end{lemma}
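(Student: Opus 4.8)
The plan is to verify, in order, that $\grad f$ is continuous, that it is injective, and then to invoke the elementary fact that a continuous bijection from a compact space onto a Hausdorff space is automatically a homeomorphism.

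Continuity of $\grad f$ is nothing but the hypothesis $f\in C^1(X)$. For injectivity I would reduce to one variable by restricting $f$ to line segments, which is legitimate because $X$ is convex. Assume, say, that $f$ is strictly concave (the convex case is symmetric). Given distinct $x,y\in X$, put $g(t)=f\bigl(x+t(y-x)\bigr)$ for $t\in[0,1]$. Then $g$ is a $C^1$ function with $g'(t)=\langle \grad f(x+t(y-x)),\,y-x\rangle$, and $g$ is strictly concave because $f$ is. Next I would use the standard fact that a $C^1$ strictly concave function of one variable has strictly decreasing derivative: if $g'(s)=g'(t)$ for some $s<t$, then concavity forces $g'$ to be constant on $[s,t]$, hence $g$ affine there, contradicting strict concavity. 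Consequently $g'(0)>g'(1)$, that is $\langle \grad f(x)-\grad f(y),\,y-x\rangle>0$; in particular $\grad f(x)\neq\grad f(y)$.

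Having shown that $\grad f$ is a continuous bijection from the compact set $X$ onto its image $\grad f(X)\subset V^*$, which is Hausdorff, I conclude that $\grad f$ sends closed sets to compact hence closed sets, so it is a closed map onto its image, and therefore a homeomorphism $X\to\grad f(X)$.

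I do not expect a genuine obstacle here. The only points that need a word of care are the passage from strict concavity of $f$ to strict monotonicity of the restricted derivative $g'$, and the observation that compactness of $X$ makes the inverse map automatically continuous, so that no quantitative lower bound on $\|\grad f(x)-\grad f(y)\|$ in terms of $\|x-y\|$ is required.
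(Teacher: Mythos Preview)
Your argument is correct. The paper itself does not supply a proof of this lemma; it simply cites \cite[Lemma~6.1]{Luo06} and \cite[Proposition~5]{Izmestiev}. What you have written is precisely the standard proof one finds in those sources: injectivity of $\grad f$ via strict monotonicity of the derivative along segments, followed by the compact--Hausdorff argument to upgrade a continuous bijection to a homeomorphism. There is nothing to add or correct.
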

Consider the function $S$ on $\M(\Tor,g)$. Since $S$ is strictly concave and $\grad S = \kappa$, Theorem \ref{cusps w particles determined by curvatures} follows.

Note that cusps with particles are in general not infinitesimally rigid.

\subsection{Proofs of Theorems \ref{InfRigCusp}, \ref{infinitesimal rigidity} and \ref{thm:weak infinitesimal rigidity}.}\label{sub:proofs rigidity}
The proofs are based on the fact that the Hessian of $S$ is equal to the Jacobian of the map $[h] \mapsto \kappa$:
$$
\frac{\partial^2 S}{\partial h_i \partial h_j} = \frac{\partial \kappa_i}{\partial h_j},
$$
see (\ref{eqn:dSdh}). Lemma \ref{lem:HessCusp} implies that for a cusp with zero curvatures the Jacobian has full rank, that is any non-trivial first-order variation of the particle lengths $[h]$ induces a non-trivial first-order variation of the curvatures $\kappa$.

Recall that a \emph{Killing field} of hyperbolic space  is  a vector
field of $ \mathbb{H}^3$ such that the elements of its local $1$-parameter group are
isometries. An \emph{infinitesimal isometric deformation} of a
polyhedral surface $S$ is a  Killing field on each face of a
triangulation of $S$ such that  two Killing fields on two adjacent triangles are equal on the common edge. The triangulation is required to have the same set of vertices as $S$. 
An infinitesimal isometric deformation is determined by its values at the vertices of $S$. It is called \emph{trivial}
if it is the restriction to $S$ of a global Killing
field. If all the infinitesimal isometric deformations of $S$
 are trivial, then $S$ is said to be \emph{infinitesimally rigid}.

Let $(P,G)$ be a convex parabolic polyhedron. An  infinitesimal isometric deformation $Z$ of $\partial P$ is called a \emph{parabolic deformation} if
\begin{equation}\label{eq:equivariant field}
Z(g (i))=dg.Z(i)+\vec{g}(i),
\end{equation}
where $i$ is a vertex of $P$ and $g\in G$. The vector $\vec{g}$ is a \emph{parabolic Killing field}. It is obtained as follows. Let $g_t$ be a path of hyperbolic isometries,  leaving  invariant the horospheres of same center $c$ as $(P,G)$ and such that $g_0=g$. Then  $\vec{g}:=dg^{-1}\frac{\partial}{\partial t}g_t\vert_{t=0}$.  In particular a parabolic Killing field  is a  Killing field of $\mathbb{H}^3$ tangent to all the horospheres of center $c$, and its restriction to each horosphere gives an Euclidean Killing field.
A convex parabolic polyhedron  is \emph{parabolic infinitesimally  rigid}  if all its parabolic deformations are trivial. 

Equation (\ref{eq:equivariant field}) arises naturally under the point of view of  isometric immersions of  surfaces in the hyperbolic space.
Due to Lemma \ref{lem:CuspToPol} each convex parabolic polyhedral surface $(\partial P,G)$ is given by a  pair $(\phi, \rho)$, where $\rho$ is a cocompact representation of $\pi_1(\Tor)$ in a group of parabolic isometries of $\mathbb{H}^3$ and $\phi$ is a convex polyhedral isometric immersion of the universal cover of $(\Tor,g)$ in the hyperbolic space,  equivariant under the action of $\rho$:  for $x\in\mathbb{R}^2$ and $\gamma\in\pi_1(\Tor)$,
$$\phi(\gamma x)=\rho(\gamma)(\phi(x)).$$

If we derivate a path  $(\phi_t, \rho_t)$  of such pairs with respect to $t$, the property of equivariance above leads to Equation (\ref{eq:equivariant field}), where $Z$ arises from the derivative of $\phi_t$, and $\vec{g}$ from the derivative of $\rho_t(\gamma)$ (here $g=\rho_0(\gamma)$). See \cite{Fillastre2} for analogous considerations.

At each vertex $i$ of a convex parabolic polyhedron $(P,G)$ with center $c$ we can decompose $T_i\mathbb{H}^3$  into a vertical direction that is the direction given by the derivative at $i$ of the ray $ci$, and a horizontal plane, which is orthogonal to the vertical direction. For a vector field $V$ we denote by $V_v$ its vertical component and by $V_h$ its horizontal component. If $Z$ is a parabolic deformation we have by definition: 
\begin{equation}\label{eqn:comp vert}
Z_v(g(i))=dg.Z_v(i).
\end{equation}

Roughly speaking, the proof of Theorem \ref{infinitesimal rigidity} goes as follows. The \emph{radii} of a convex parabolic polyhedron are the particle lenghts of the corresponding cusp. It follows that each convex parabolic polyhedron is defined by its radii. Hence each  parabolic deformation  corresponds to a first-order deformation $\dot{r}=(\dot{r}_1,\ldots,\dot{r}_n)$ of the radii of the polyhedron such that the corresponding  first-order deformation of the singular curvatures $\dot{\kappa}=(\dot{\kappa}_1,\ldots,\dot{\kappa}_n)$ is zero: $\dot{r}$ belongs to the nullspace of the Hessian of the total scalar curvature of the cusp. But we know that in this case the nullspace is reduced to a trivial deformation.
Such proofs have already been used in  cases where no group acts on the deformation \cite{schconnelly,Izmestiev}.

 We denote by $\Delta (M)$ the Hessian of the total scalar curvature $S$ at the point $M$.

\begin{lemma}
Let $Z$ be a parabolic deformation of $(\partial P,G)$. Then $Z_v$ induces a first-order deformation $\dot{r}$ of the radii of $(P,G)$
such that $\dot{r}$ belongs to the nullspace of $\Delta (P/G)$.
\end{lemma}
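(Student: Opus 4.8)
The plan is to distill from $Z_v$ a first‑order variation $\dot r$ of the radii, to check that the induced first‑order variation $\dot\kappa$ of the particle curvatures vanishes (a parabolic deformation cannot open up cone angle along a particle), and then to invoke the identity $\dot\kappa=\Delta(P/G)\,\dot r$, which comes from the relation $\partial S/\partial h_i=\kappa_i$. The nullspace of $\Delta(P/G)$ will be cut down to the trivial deformations afterwards (in the proof of Theorem~\ref{infinitesimal rigidity}) by Lemma~\ref{lem:HessCusp}.

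\emph{Construction of $\dot r$.} Fix a horosphere $B$ centered at $c$ and put $r_i:=\dist(i,B)$ for each vertex $i$ of $P$. By Lemma~\ref{lem:CuspToPol} these are the truncated particle lengths of the cusp $P/G$, so by Proposition~\ref{prp:HeightsToCusp} they determine $(P,G)$. Let $\nu_i$ be the vertical unit vector at $i$, i.e.\ the unit tangent at $i$ to the ray issued from $c$ through $i$; it is the gradient of $\dist(\cdot,B)$ at $i$, hence does not depend on the choice of $B$. The first‑order variation of $r_i$ under the deformation is $\dot r_i=\langle Z(i),\nu_i\rangle$, and since $Z_h(i)$ is orthogonal to $\nu_i$ this equals $\langle Z_v(i),\nu_i\rangle$, so $\dot r$ is determined by $Z_v$ alone. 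Each $g\in G$ fixes $c$ and preserves every horosphere centered at $c$, so $dg$ carries $\nu_i$ to $\nu_{g(i)}$ and preserves $\dist(\cdot,B)$; together with (\ref{eqn:comp vert}) this gives $\dot r_{g(i)}=\langle dg\,Z_v(i),\,dg\,\nu_i\rangle=\dot r_i$. Hence $\dot r$ descends to a well‑defined vector indexed by $\Sigma$.

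\emph{Vanishing of $\dot\kappa$.} For each $i$, the total dihedral angle $\omega_i$ around the $i$th particle is the sum, over the faces $F$ of $P$ meeting at $i$, of the dihedral angle at the lateral edge $ci$ of the semi‑ideal pyramid with apex $c$ over $F$; this is a smooth function of the positions of the vertices of $P$, the apex $c$ staying fixed along a parabolic deformation. At the configuration of $P$ this function equals $2\pi$, and it remains equal to $2\pi$ on a neighborhood: since some horoball centered at $c$ is contained in $P$, the open ray from $i$ towards $c$ lies in the interior of $P$, so near a point of that ray the pyramids listed above fill a whole ball and their wedges tile the angle $2\pi$ around the ray — and this is an open condition on the vertex positions. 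Consequently $\dot\omega_i=0$, i.e.\ $\dot\kappa_i=-\dot\omega_i=0$ for every $i$: a parabolic deformation leaves the ambient cone‑manifold genuinely hyperbolic, particle‑free, near its particles.

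\emph{Conclusion.} Because $Z$ is isometric, the deformed boundary carries the metric $g$ throughout, so for small deformations, with the combinatorics frozen, the particle curvatures form a smooth function $\kappa=\kappa(h)$ of the particle lengths $h=(h_i)_{i\in\Sigma}$; differentiating (\ref{eqn:dSdh}) gives $\partial\kappa_i/\partial h_j=\partial^2 S/\partial h_i\partial h_j$, so the derivative of $\kappa$ at $P/G$ is exactly $\Delta(P/G)$. Therefore the variation of the curvatures along the deformation induced by $Z$ equals $\Delta(P/G)\,\dot r$, and comparing with the previous step yields $\Delta(P/G)\,\dot r=0$, i.e.\ $\dot r$ lies in the nullspace of $\Delta(P/G)$. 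The step requiring the most care is the assertion $\omega_i\equiv 2\pi$ above: it records that a first‑order isometric deformation of the immersed surface $\partial P$ preserves, near each particle, the local embeddedness of the cone‑manifold in $\H^3$ — an open condition satisfied at the convex polyhedron $P$ — and one must make sure that this is robust against loss of global convexity or embeddedness, which is irrelevant for the infinitesimal statement since $\kappa(h)$ is defined on a full neighborhood of $h(P/G)$ in $\R^\Sigma$.
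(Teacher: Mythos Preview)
Your argument is correct and follows the same route as the paper's proof: extract $\dot r$ from $Z_v$, use (\ref{eqn:comp vert}) to descend to $P/G$, observe that a parabolic deformation of an embedded surface in $\H^3$ keeps $\omega_i\equiv 2\pi$ so that $\dot\kappa=0$, and conclude via (\ref{eqn:dSdh}) that $\Delta(P/G)\,\dot r=0$. Your treatment is simply more explicit than the paper's --- in particular, you spell out why $\omega_i\equiv 2\pi$ is an open condition on the vertex positions and why the curvature variation along the isometric deformation is governed by $\Delta(P/G)$, both of which the paper leaves as one-line assertions.
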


\begin{proof}
By definition $Z_v$ gives a first-order deformation $\dot{r}$ of the radii of the vertices of $P$. Moreover by  Equation (\ref{eqn:comp vert}) this deformation is well-defined on $P/G$. As $Z_v$ is an infinitesimal deformation of a polyhedral surface of the hyperbolic space, the angles around the particles of $P/G$ must remain equal to $2\pi$ under the deformation, that means that $\dot{\kappa}=0$, hence  $\dot{r}$ belongs to  $\Delta (P/G)$ by (\ref{eqn:dSdh}).
\end{proof}

The converse holds:

\begin{lemma}
Let $\dot{r}\in \Delta (P/G)$. Then there exists a unique parabolic deformation $Z$ of $(\partial P,G)$ such that $\dot{r}(i)=\vert Z_v(i)\vert$.
\end{lemma}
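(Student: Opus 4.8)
The plan is to reverse the construction of the previous lemma: starting from $\dot r$, one deforms the truncated particle lengths of the semi-ideal pyramids making up the cusp $M=P/G$ by $t\dot r$, checks that the deformed pyramids still assemble, to first order, into an equivariant polyhedral immersion, and sets $Z$ equal to the derivative of this immersion.

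First I would decompose $M$ into semi-ideal pyramids over a geodesic triangulation $T$ refining the face decomposition of $\partial M$, and develop $M$ to recover $(P,G)$ with center $c$ (Lemma~\ref{lem:CuspToPol}). For small $t$, replacing the truncated particle lengths $h$ by $h+t\dot r$ produces over each triangle $ijk$ of $T$ a semi-ideal pyramid: its base retains the metric coming from $g$, and a non-degenerate semi-ideal pyramid depends smoothly on its heights, so these pyramids exist and vary smoothly in $t$. The next step is to glue them. Along a lateral face $c\ast ij$ shared by two pyramids, the first-order deformation induced on the semi-ideal triangle $c\ast ij$ is the same on both sides, since it is determined by $\dot\ell_{ij}=0$ (the length is prescribed by $g$) together with $\dot h_i=\dot r_i$ and $\dot h_j=\dot r_j$; after fixing the residual parabolic freedom in developing one pyramid, the neighbouring pyramid is developed by the unique isometry matching the common lateral face, and carries the prescribed deformation. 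Consistency around a particle $i$ is exactly the condition $\dot\omega_i=0$: by (\ref{eqn:dSdh}) the Hessian $\Delta(P/G)$ equals the Jacobian of $\kappa$, hence $\dot\kappa_i=(\Delta(P/G)\,\dot r)_i=0$ and the holonomy of the contractible loop encircling the particle stays trivial to first order. Going around the generators of $\pi_1(\Tor)$ produces a path $\rho_t$ of holonomy representations; since every face pairing fixes $c$ and preserves a horosphere centered at $c$ (because $M$ is complete), each $\rho_t(\gamma)$ stays parabolic. Differentiating the equivariant family $(\phi_t,\rho_t)$ gives, as in the discussion following (\ref{eq:equivariant field}), a parabolic deformation $Z$ of $(\partial P,G)$; since the particle length $r_i$ is the distance from $i$ to a fixed horosphere at $c$ measured along the ray $ci$, the vertical component of the velocity of $i$ has length $\dot r_i$, i.e.\ $|Z_v(i)|=\dot r_i$.

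For uniqueness I would appeal to Proposition~\ref{prp:HeightsToCusp}: the truncated cusp with particle lengths $h+t\dot r$, hence the pair $(\phi_t,\rho_t)$ up to an ambient isometry fixing $c$, is determined by $\dot r$, so normalizing the developing of the base pyramid pins $Z$ down. I expect the main obstacle to be exactly the gluing step above: one must verify that the residual freedom in developing each pyramid (a parabolic Killing field at $c$) is precisely enough to upgrade the \emph{abstract} agreement of the two pyramid deformations on a shared lateral face to an equality of Killing fields of $\H^3$, and that the propagated holonomies never leave the parabolic locus. Equivalently, the heart of the matter is to show that every $\dot r\in\ker\Delta(P/G)$ is realized by a genuine first-order deformation of $M$ through complete hyperbolic cone-structures inducing the metric $(\Tor,g)$ on the boundary; granted this, passing to the universal cover and differentiating yields $Z$ together with relation (\ref{eq:equivariant field}).
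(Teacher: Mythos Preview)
Your strategy---deform the truncated particle lengths by $h+t\dot r$, develop the resulting pyramids, and differentiate---is the same idea as the paper's, packaged differently. The paper works purely at the infinitesimal level and splits $Z=Z_v+Z_h$: the vertical part $Z_v(i)$ is the vertical vector of signed length $\dot r_i$, and the horizontal part $Z_h$ is read off from how the projections of the vertices onto a fixed horosphere $H$ move when the horoprisms are deformed. Well-definedness of the gluing of $Z_h$ is exactly $\dot r\in\Delta(P/G)$, and the parabolic equivariance~(\ref{eq:equivariant field}) is checked by observing that $Z_h$ restricted to $H$ is a first-order deformation of the Euclidean lattice, hence a Euclidean Killing field on each generator.

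One point in your write-up deserves tightening. For $t\neq 0$ the cusp $M_t$ with heights $h+t\dot r$ generally has particles of curvature $O(t^2)$, so there is no honest equivariant immersion $\phi_t:\widetilde{(\Tor,g)}\to\H^3$ to differentiate; the developing lives only on the universal cover of the regular part and has nontrivial (rotational) holonomy about each particle. What you really need is the infinitesimal statement: develop each deformed pyramid, take the derivative at $t=0$ to get a Killing field on that face, and glue these Killing fields across lateral faces. The gluing around the $i$th particle is consistent precisely because $\dot\omega_i=-\dot\kappa_i=0$, which is your use of $\dot r\in\ker\Delta(P/G)$. Once phrased this way your argument coincides with the paper's; the paper's $Z_v/Z_h$ split just makes the verification of~(\ref{eq:equivariant field}) a one-line lattice computation rather than an appeal to completeness of $M_t$. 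Your justification that the holonomy stays parabolic (the face pairings fix $c$ and preserve horospheres because the cuspidal complex is compatible) is correct and amounts to the same thing.

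On uniqueness: your appeal to Proposition~\ref{prp:HeightsToCusp} gives what the paper's construction gives, namely a canonical $Z$ once the development of a base pyramid is fixed; neither argument rules out adding a global parabolic Killing field, and indeed the paper's proof does not address uniqueness beyond exhibiting the construction.
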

\begin{proof}
In a fundamental domain on $\partial P$ for the action of $G$, for each vertex $i$, we define a vector  $Z_v(i)$ as the unique vertical vector at  $i$ which has norm and direction given by $\dot{r}_i$. We  define this vector for the other vertices  of  $P$ using the action of $G$,   that defines a vector field $Z_v$ on $\partial P$.

The deformation $\dot{r}$ also acts on the projection of the vertices onto a horosphere $H$ of same center $c$ than $(P,G)$. We consider horoprisms given by $H$ together with a triangulation of the faces of $\partial P$. 
Into each horoprism the deformation $\dot{r}$   gives a horizontal deformation that we call $Z_h$. We can extend $Z_h$ to $\partial P$ by gluing the horoprisms. The vector field $Z_h$ is well-defined because  $\dot{r}\in\Delta (P/G)$. Actually we make a little abuse of notation, as the vector $Z_h$ at a vertex $i$ should be defined as the image under the differential of the orthogonal projection from $H$ to the horosphere concentric to $H$ and passing by $i$ of the vector defined on $H$ that we also denote by $Z_h$.

We define the vector field $Z:=Z_v+Z_h$ on $\partial P$. It is an  infinitesimal isometric deformation as it corresponds to a first-order deformations of the particle lengths for a non-varying boundary metric.

It remains to prove that $Z_h$  verifies (\ref{eq:equivariant field}). It will follow that $Z$ verifies (\ref{eq:equivariant field})  because of (\ref{eqn:comp vert}). Consider a fundamental domain on $\partial P$ for the action of $G$. Its projection onto $H$ defines a lattice on $H$ (and hence on $\mathbb{R}^2$). Applying $Z_h$ on this fundamental domain  leads to a first-order deformation of the lattice on $H$. Consider  a  generator $g$ of $G$ given by an edge of the lattice. Let $i$ be a vertex of the lattice (up to project onto $H$). 
Up to compose by a global Killing field suppose that we have $Z_h(i)=0$.
At the vertex $g(i)$, there will be a horizontal first-order displacement, which is given by the deformation of the lattice. Then $Z_h(g(i))$ is the restriction to $g(i)$ of a  Euclidean Killing field.
\end{proof}

\begin{proof}[Proof of Theorem \ref{infinitesimal rigidity}]
Let $Z$ be a parabolic deformation of a convex parabolic polyhedron $(P,G)$. By the lemmas above $Z$ corresponds to a vector $\dot{r}$ of the nullspace of $\Delta (P/G)$, and by Lemma  \ref{lem:HessCusp} we know that  this nullspace  is reduced to the trivial deformation $\mathbb{1}$.
\end{proof}

A hyperbolic
metric $m$ of a convex polyhedral cusp $M$ can be defined as a section of the bundle of scalar products over $M$. A first-order deformation  $\dot{m}$ of  $m$ can be defined as a section of the bundle of symmetric bilinear forms over $M$. Such a deformation is \emph{trivial} if it is given by  the
Lie derivative of $g$ under the action of a vector field of $M$. We will only consider deformations
 $\dot{m}$ such that the metric remains hyperbolic, \emph{i.e.} the first-order variation of the sectional
curvature of $m$ induced by $\dot{m}$ vanishes.  The deformation $\dot{m}$ is \emph{trivial on $\partial M$} if
its restriction to $\partial M$ is zero.

\begin{definition}
A convex polyhedral cusp $M$ is called \emph{infinitesimally rigid} if every  deformation $\dot{m}$ of $M$ which is trivial on $\partial M$ is trivial on the whole $M$.
\end{definition}

\begin{proof}[Proof of Theorem \ref{InfRigCusp}]
It is known that a first-order deformation of a hyperbolic manifold with convex boundary which is trivial on the boundary is equivalent to an equivariant infinitesimal isometric deformation  of the image of the boundary by the developing map. Moreover one is trivial when the other is, see \emph{e.g.} \cite{Schconvex}. It follows that Theorem \ref{InfRigCusp} is equivalent to Theorem \ref{infinitesimal rigidity}.
\end{proof}

\begin{proof}[Proof of Theorem \ref{thm:weak infinitesimal rigidity}]
We sketch the proof as it is word by word the same as in the Section 4 of \cite{schweakconvex}, even if this reference concerns Euclidean polytopes.  The idea of the proof is the same as for convex parabolic polyhedra above: it is sufficient to prove that the matrix $\Delta$ associated to each weakly convex star-shaped  parabolic polyhedra  is negatively definite (the definition of the total scalar curvature doesn't use the convexity). And this follows directly from the case of convex parabolic polyhedra as:
\begin{enumerate}
 \item a weakly convex star-shaped  parabolic polyhedron $P$ is obtained from a convex parabolic polyhedra $P'$  by removing a finite number of simplices (in a fundamental domain) \cite[Lemma 4.1]{schweakconvex};
\item each time we remove a simplex,  the matrix $ \Delta'$ associated to $P'$ changes by the addition of a negatively semidefinite matrix \cite[Lemma 4.3 and 4.4]{schweakconvex};
\item  at the end $\Delta$ is negatively definite as $\Delta'$ is \cite[Lemma 4.5]{schweakconvex}. 
\end{enumerate}
\end{proof}

\bibliographystyle{alpha}
\bibliography{MetricTorusbiblio}
\end{document}